\theoremstyle{definition}
\newtheorem{para}{}[section]
\newtheorem{remark}[para]{Remark}
\newtheorem{remarks}[para]{Remarks}
\newtheorem{notation}[para]{Notation}
\newtheorem{convention}[para]{Convention}
\newtheorem{definition}[para]{Definition}
\newtheorem{definitions}[para]{Definitions}
\newcommand\Alternatives{\begin{enumerate}[(i)]}
\newcommand\EndAlternatives{\end{enumerate}}
\newcommand\Conditions{\begin{enumerate}[(1)]}
\newcommand\EndConditions{\end{enumerate}}
\theoremstyle{plain}
\newtheorem{theorem}[para]{Theorem}
\newtheorem{lemma}[para]{Lemma}
\newtheorem{proposition}[para]{Proposition}
\newtheorem{corollary}[para]{Corollary}
\newtheorem{conjecture}[para]{Conjecture}
\newtheorem{claim}[equation]{}
\numberwithin{equation}{para}
\numberwithin{figure}{section}
\newcommand\Number{\begin{para}}
\newcommand\EndNumber{\end{para}}
\newcommand\Definition{\begin{definition}}
\newcommand\EndDefinition{\end{definition}}
\newcommand\Definitions{\begin{definitions}}
\newcommand\EndDefinitions{\end{definitions}}
\newcommand\Theorem{\begin{theorem}}
\newcommand\EndTheorem{\end{theorem}}
\newcommand\Conjecture{\begin{conjecture}}
\newcommand\EndConjecture{\end{conjecture}}
\newcommand\Remark{\begin{remark}}
\newcommand\EndRemark{\end{remark}}
\newcommand\Remarks{\begin{remarks}}
\newcommand\EndRemarks{\end{remarks}}
\newcommand\Convention{\begin{convention}}
\newcommand\EndConvention{\end{convention}}
\newcommand\Notation{\begin{notation}}
\newcommand\EndNotation{\end{notation}}
\newcommand\Lemma{\begin{lemma}}
\newcommand\EndLemma{\end{lemma}}
\newcommand\Proposition{\begin{proposition}}
\newcommand\EndProposition{\end{proposition}}
\newcommand\Corollary{\begin{corollary}}
\newcommand\EndCorollary{\end{corollary}}
\newcommand\Claim{\begin{claim}}
\newcommand\EndClaim{\end{claim}}
\newcommand\Proof{\begin{proof}}
\newcommand\EndProof{\end{proof}}
\newcommand\Equation{\begin{equation}}
\newcommand\EndEquation{\end{equation}}
\newcommand\Bullets{\begin{itemize}}
\newcommand\EndBullets{\end{itemize}}
\newcommand\rhumba{5.1}
\newcommand\easyHeegaard{Lemma 2.1}
\newcommand{\CDSmanganese}{Theorem 6.5}
\newcommand{\CDSgenustwo}{Corollary 5.9}
\newcommand{\CDStrichotomy}{Theorem 5.8}
\newcommand{\CDSnonsep}{Theorem 3.1}
\newcommand{\CDSthreeoeight}{Theorem 6.8}
\newcommand\inter{\mathop{\rm int}}
\newcommand\genus{\mathop{\rm genus}}
\newcommand{\cut}{\,\backslash\backslash\,}
\newcommand\Hg{{\rm Hg}}
\newcommand\kish{\mathop{\rm kish}}
\newcommand\chibar{\bar\chi}
\newcommand\HH{{\mathbb H}}
\newcommand\ZZ{{\mathbb Z}}
\newcommand\myprime[1]{#1'}
\newcommand\tX{\widetilde X}
\newcommand\tj{\widetilde j}
\newcommand\calp{{\mathcal P}}
\newcommand\calb{{\mathcal B}}
\newcommand\calw{{\mathcal W}}
\newcommand\cala{{\mathcal A}}
\newcommand\cald{{\mathcal D}}
\newcommand\calc{{\mathcal C}}
\newcommand\calk{{\mathcal K}}
\newcommand\calh{{\mathcal H}}
\newcommand\caln{\mathcal{N}}
\newcommand\calt{{\mathcal T}}
\newcommand\calq{{\mathcal Q}}
\begin{document}

\title{Volume and topology of bounded and closed hyperbolic $3$-manifolds.}

\author{Jason DeBlois}
\address{Department of Mathematics, Statistics, and Computer Science
(M/C 249)\\
University of Illinois at Chicago\\
851 S. Morgan St.\\
Chicago, IL 60607-7045}
\email{jdeblois@math.uic.edu}
\thanks{Partially supported by NSF grant DMS-0703749}

\author{Peter B.~Shalen}
\address{Department of Mathematics, Statistics, and Computer Science
(M/C 249)\\
University of Illinois at Chicago\\
851 S. Morgan St.\\
Chicago, IL 60607-7045}
\email{shalen@math.uic.edu}
\thanks{Partially supported by NSF grants DMS-0204142 and DMS-0504975}

\begin{abstract}
  Let $N$ be a compact, orientable hyperbolic $3$-manifold with
  $\partial N$ a connected totally geodesic surface of genus $2$.  If
  $N$ has Heegaard genus at least $5$, then its volume is greater than
  $6.89$.  The proof of this result uses the
  following dichotomy: either $N$ has a long \textit{return path}
  (defined by Kojima-Miyamoto), or $N$ has an embedded codimension-$0$
  submanifold $X$ with incompressible boundary $T \sqcup \partial N$,
  where $T$ is the frontier of $X$ in $N$, which is not a book of $I$-bundles.
  As an application of this result, we show that if $M$ is a closed,
  orientable hyperbolic $3$-manifold with $
  \mathrm{dim}_{\mathbb{Z}_2} H_1(M; \mathbb{Z}_2) \geq 5$, and if the
  cup product map $H^1 (M;\mathbb{Z}_2) \otimes H^1(M;\mathbb{Z}_2)
  \rightarrow H^2(M;\mathbb{Z}_2)$ has image of dimension at most one,
  then $M$ has volume greater than $3.44$. 
\end{abstract}

\maketitle

\section{Introduction}

The results of this paper support an old theme in the study of
hyperbolic $3$-manifolds, that the volume of a hyperbolic $3$-manifold
increases with its topological complexity.  The 
first main result, Theorem \ref{vol6.89} below, 
reflects this theme in the context of hyperbolic manifolds with
totally geodesic boundary.  We denote the Heegaard genus of a
$3$-manifold $N$ by $\Hg(N)$.

\newcommand\volsixeightnine{ Let $N$ be a compact, orientable 
hyperbolic
$3$-manifold with $\partial N$ a connected totally geodesic surface of
genus $2$.  
If $\Hg(N) \geq 5$, then $N$ has volume greater than
$6.89$. }
\begin{theorem}\label{vol6.89} \volsixeightnine \end{theorem}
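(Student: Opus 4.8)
The overall plan is to combine a structural dichotomy for $N$ with volume estimates adapted to each of its two cases. Fix the complete hyperbolic structure on $\inter N$ realizing $\partial N$ as a totally geodesic surface; by Gauss--Bonnet it has area $-2\pi\chi(\partial N) = 4\pi$. I would first appeal to the trichotomy for compact hyperbolic manifolds with incompressible boundary (\CDStrichotomy, and in particular its genus-$2$ specialization \CDSgenustwo): either $N$ has a return path, in the sense of Kojima--Miyamoto, of length at least a fixed threshold $\ell_0$; or $N$ contains an embedded codimension-$0$ submanifold $X$ with $\partial N \subset \partial X$ whose frontier $T = \mathrm{Fr}_N X$ is incompressible in $N$, and such that $X$ is not a book of $I$-bundles; or $\Hg(N) \le 4$. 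The hypothesis $\Hg(N) \ge 5$ rules out the third alternative, which leaves exactly the dichotomy stated in the abstract, and one must then produce the bound $\mathrm{vol}(N) > 6.89$ in each surviving case.

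The return-path case is handled by a direct geometric estimate. Let $\ell_N$ be the length of a shortest return path of $N$. Pushing $\partial N$ into $N$ along the geodesics normal to it yields an embedded collar of $\partial N$ of radius $\ell_N/2$, and in Fermi coordinates $(x,t)$ the metric has the form $dt^2 + \cosh^2(t)\,g_{\partial N}$, so this collar has volume
\[
4\pi \int_0^{\ell_N/2} \cosh^2 t \, dt \;=\; \pi\bigl(\ell_N + \sinh \ell_N\bigr).
\]
Since $\pi(\ell + \sinh \ell) > 6.89$ as soon as $\ell$ is a little larger than $1$, choosing $\ell_0$ accordingly settles this case; Kojima--Miyamoto's sharper analysis, which also counts volume lying outside the collar, permits a somewhat smaller value of $\ell_0$.

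In the other case $X$ has incompressible boundary $T \sqcup \partial N$ and is not a book of $I$-bundles, so by Jaco--Shalen--Johannson theory the characteristic submanifold of $X$ has nonempty guts $\Gamma$, with $\chi(\Gamma) \le -1$. Since $T$ is an embedded essential surface in $N$ and $\Gamma$ sits inside the guts of $N \cut T$, the Agol--Storm--Thurston inequality, in the form adapted to totally geodesic boundary (\CDSmanganese, or \CDSthreeoeight; one uses \CDSnonsep when $T$ is non-separating), bounds $\mathrm{vol}(N)$ below by a positive combination of $V_{\mathrm{oct}} = 3.66\ldots$ (the volume of the regular ideal octahedron), $|\chi(\Gamma)|$, and a term coming from the genus-$2$ geodesic boundary. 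One then checks that this lower bound exceeds $6.89$ whenever $\chi(\Gamma) \le -1$: heuristically, the totally geodesic boundary alone already forces $\mathrm{vol}(N) \ge 6.45\ldots$ by Kojima--Miyamoto, and a nonempty guts supplies more than the remaining $0.44$ needed to cross the threshold.

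The main obstacle is the structural input: producing the submanifold $X$ from the absence of a long return path together with the Heegaard-genus hypothesis. One half of this is geometric --- extracting an embedded incompressible surface $T$ cobounding a collar-like region $X$ with $\partial N$, using either a maximal collar of $\partial N$ and its cut locus or the Kojima--Miyamoto canonical decomposition of $N$ into partially truncated polyhedra (whose interior edges are the return paths), while making sure that the compressions needed to make $T$ incompressible do not render $X$ a book of $I$-bundles. The other half is the interaction between the book-of-$I$-bundles hypothesis and Heegaard genus: a book of $I$-bundles carrying a genus-$2$ closed horizontal boundary component has small Heegaard genus, and amalgamating a minimal splitting of such a piece with splittings of the (simple) complementary pieces along $T$ produces a Heegaard splitting of $N$ of genus at most $4$, contradicting $\Hg(N) \ge 5$. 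Assembling these pieces into the clean trichotomy above, with $\ell_0$ and the constant $6.89$ correctly calibrated, is where most of the work lies.
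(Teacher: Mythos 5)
Your high-level outline matches the paper's: a dichotomy between a long shortest return path (handled by a collar-type volume bound) and a codimension-$0$ submanifold $X$ with incompressible frontier that is not a book of $I$-bundles (handled by guts and Agol--Storm--Thurston), with the Heegaard-genus hypothesis closing off the book-of-$I$-bundles possibility.  However there are two genuine gaps that the paper's argument fills and yours does not.

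First, the dichotomy is misattributed.  \CDStrichotomy\ and \CDSgenustwo\ of \cite{CDS} concern \emph{closed} manifolds cut along closed incompressible surfaces; they do not apply to a compact $N$ with connected totally geodesic boundary, and they do not produce a submanifold $X$ with $\partial N\subset\partial X$ from the absence of a long return path.  You acknowledge at the end that ``producing the submanifold $X$'' is the main obstacle, and indeed this is precisely what the paper spends Sections \ref{sec:prelim}--\ref{sec:trimonic} constructing: when $\cosh\ell_1\le1.215$ and $\widetilde N$ contains a $(1,1,1)$ hexagon, Propositions \ref{111} and \ref{Cboundary} produce a nondegenerate \emph{trimonic} submanifold $X$; Lemma \ref{it can't happen here} shows $X$ cannot be a book of $I$-bundles; Lemma \ref{as steward} and Lemma \ref{I-bundle genus two boundary} control the Heegaard genus so that if $\overline{N-X}$ is compressible or a book of $I$-bundles then $\Hg(N)\le4$.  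None of this is a citation of earlier work; the $(1,1,1)$-hexagon/trimonic-manifold machinery is the substance of the theorem, and it is absent from your proposal.

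Second, the two halves of your dichotomy do not meet.  Your collar estimate $\mathrm{vol}\ge\pi(\ell_N+\sinh\ell_N)$ only exceeds $6.89$ once $\ell_N\gtrsim1.01$, i.e. $\cosh\ell_N\gtrsim1.55$, whereas the trimonic/guts branch can only be run when $\cosh\ell_1\le1.215$, i.e. $\ell_1\le0.645$ (this is the hypothesis of Theorem \ref{hg4orvol7.32} and of the lemmas feeding into it, such as Lemma \ref{l2twicel1} and Lemma \ref{l1avoids111}).  You gesture at ``Kojima--Miyamoto's sharper analysis'' permitting a smaller $\ell_0$, but bridging the interval $0.645\le\ell_1\lesssim1.01$ is not a footnote: it is Proposition \ref{l1cosh1.215}, which requires the muffin construction, the refined $\ell_2$-versus-$\ell_1$ estimates of Lemmas \ref{l2vsl1} and \ref{KLM} (the latter new to this paper), Proposition \ref{no(1,1,1)}, and the numerical verification recorded in Tables \ref{table:volEF} and \ref{table:volLM}.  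Without this calibration the two cases of your argument leave an uncovered range of $\ell_1$ and the proof does not close.
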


Our second main result concerns closed manifolds:

\newcommand\volthreefourfour{  Let $M$ be a closed, orientable hyperbolic $3$-manifold with
$$  \mathrm{dim}_{\mathbb{Z}_2} H_1(M; \mathbb{Z}_2) \geq 5,  $$
and suppose that the cup product map $H^1 (M;\mathbb{Z}_2) \otimes H^1(M;\mathbb{Z}_2) \rightarrow H^2(M;\mathbb{Z}_2)$ has image of dimension at most one.  Then $M$ has volume greater than $3.44$. }
\begin{theorem}\label{vol3.44} \volthreefourfour
\end{theorem}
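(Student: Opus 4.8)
The plan is to assume $M$ is as in the hypotheses but that $\mathrm{vol}(M)\le 3.44$, and to derive a contradiction by locating inside $M$ a codimension-zero piece governed by Theorem~\ref{vol6.89}. The argument has three stages: (i) convert the cohomological hypotheses into the existence of a closed, connected, separating incompressible surface $S\subset M$ of genus $2$, one of whose complementary pieces $N=(M\cut S)$ is atoroidal, acylindrical, not a book of $I$-bundles, and has $\Hg(N)\ge 5$; (ii) use Thurston's hyperbolization for Haken manifolds with incompressible boundary to put a hyperbolic metric on $N$ in which $\partial N\cong S$ is connected totally geodesic of genus $2$, and apply the Agol--Storm--Thurston inequality to get $\mathrm{vol}(M)\ge\tfrac12\,\mathrm{vol}(N)$; (iii) apply Theorem~\ref{vol6.89} to conclude $\mathrm{vol}(N)>6.89$, whence $\mathrm{vol}(M)>3.445>3.44$. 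The factor $\tfrac12$ coming from Agol--Storm--Thurston is precisely what turns the bound $6.89$ into the bound $3.44$.

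For stage (i) the cup-product hypothesis does the work. Composing the cup product $H^1(M;\mathbb{Z}_2)\otimes H^1(M;\mathbb{Z}_2)\to H^2(M;\mathbb{Z}_2)$ with linear functionals on $H^2(M;\mathbb{Z}_2)$ produces symmetric $\mathbb{Z}_2$-bilinear forms on $H^1(M;\mathbb{Z}_2)$ whose span is at most one-dimensional; since $\dim_{\mathbb{Z}_2}H^1(M;\mathbb{Z}_2)=\dim_{\mathbb{Z}_2}H_1(M;\mathbb{Z}_2)\ge 5$, this produces cohomology classes with many vanishing products, hence (by Poincar\'e duality and the standard straightening of Poincar\'e-dual cycles into embedded two-sided incompressible surfaces, organized by a map of $M$ to a graph or $2$-complex) a system of pairwise disjoint incompressible surfaces in $M$. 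The threshold $5=2\cdot 2+1$ should then be used as in the bounded case: it is the amount of $\mathbb{Z}_2$-homology that forces, after splitting off the characteristic $I$-bundle and solid-torus (window) pieces of the complement, a surviving component $N$ with $\dim_{\mathbb{Z}_2}H_1(N;\mathbb{Z}_2)\ge 5$ --- so $\Hg(N)\ge\operatorname{rank}\pi_1(N)\ge 5$ --- bounded by a single surface of genus $2$; hyperbolicity of $M$ excludes incompressible tori and spheres, pinning this genus at $2$, and non-triviality of the guts of $N$ is exactly the statement that $N$ is not a book of $I$-bundles. Several alternatives must be dismissed: if the surface is geometrically infinite then $M$ virtually fibers with fiber of genus $\le 2$, and if every surviving piece is an $I$-bundle then no guts appears; in these cases the volume lower bound must be extracted from independent estimates, and one checks that they too contradict $\mathrm{vol}(M)\le 3.44$.

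Given $N$ Haken, atoroidal, acylindrical, with nonempty incompressible boundary, and not an $I$-bundle, Thurston's theorem gives $N$ a hyperbolic metric in which $\partial N\cong S$ is a connected totally geodesic genus-$2$ surface, so $N$ meets the hypotheses of Theorem~\ref{vol6.89} once $\Hg(N)\ge 5$ is known. Applying the Agol--Storm--Thurston inequality to the incompressible surface $S$ in the closed hyperbolic manifold $M$ yields $\mathrm{vol}(M)\ge\tfrac12\,\mathrm{vol}\big(\operatorname{guts}(M\cut S)\big)\ge\tfrac12\,\mathrm{vol}(N)$, the guts carrying its totally geodesic metric; Theorem~\ref{vol6.89} gives $\mathrm{vol}(N)>6.89$, so $\mathrm{vol}(M)>3.445$, contradicting $\mathrm{vol}(M)\le 3.44$ and finishing the proof.

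I expect stage (i) to be the crux. It is not hard to find some incompressible surface or some non-trivial guts; the difficulty is to arrange that a \emph{single} complementary piece $N$ simultaneously has \emph{connected, genus-exactly-two, totally geodesic} boundary --- which forces $S$ to be taken separating and $N$ acylindrical, rather than using the more accessible homologically non-trivial (hence non-separating) surfaces --- and retains Heegaard genus at least $5$. These requirements pull in opposite directions: cutting along additional surfaces and discarding window pieces to make $N$ acylindrical and genus-$2$-bounded sheds $\mathbb{Z}_2$-homology, whereas $\Hg(N)\ge 5$ demands keeping it, and the precise accounting is where the threshold $2\cdot 2+1=5$ and the one-dimensionality of the cup-product image must be balanced against one another, in parallel with the return-path versus book-of-$I$-bundles dichotomy underlying Theorem~\ref{vol6.89}. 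It is exactly in this separating situation that the \emph{strengthened} estimate of Theorem~\ref{vol6.89} is essential: the general lower bound for the volume of a hyperbolic $3$-manifold with a single genus-$2$ totally geodesic boundary surface is too weak to survive halving, which is why the hypothesis $\dim_{\mathbb{Z}_2}H_1(M;\mathbb{Z}_2)\ge 5$ --- feeding $\Hg(N)\ge 5$ --- is imposed. A secondary obstacle is to control the degenerate cases uniformly enough that the single number $3.44$ survives all of them.
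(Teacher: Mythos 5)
Your proposal diverges fundamentally from the paper's argument and, more importantly, has a gap at exactly the place you yourself flag as the crux.

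The paper never produces an incompressible surface in $M$ itself. Its proof begins with a dichotomy on $\pi_1(M)$: if $\pi_1(M)$ is $4$-free, a result of Culler--Shalen (Theorem 1.2 of \cite{CS_vol}) immediately gives $\mathrm{vol}(M)>3.44$; if not, there is a non-free subgroup $G$ of rank at most $4$, and the cup-product hypothesis --- via Proposition 3.5 of \cite{CS_onecusp} --- is used \emph{only} to produce a two-sheeted cover $\widetilde M\to M$ with $\dim_{\mathbb{Z}_2}H_1(\widetilde M;\mathbb{Z}_2)\ge 8$ to which $G$ lifts. Theorem 1.1 of \cite{CS_vol} then says $\widetilde M$ contains an incompressible surface of genus $2$ or $3$, and since $\Hg(\widetilde M)\ge\dim_{\mathbb{Z}_2}H_1(\widetilde M;\mathbb{Z}_2)\ge 8$, Theorem~\ref{closedvol6.89} applies to $\widetilde M$ to give $\mathrm{vol}(\widetilde M)>6.89$, and then the factor of $2$ comes from the covering degree: $\mathrm{vol}(M)=\tfrac12\mathrm{vol}(\widetilde M)>3.445$. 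Your factor of $\tfrac12$ is misattributed to Agol--Storm--Thurston. The AST-type estimate the paper invokes (via \cite[Proposition 6.4]{CDS}) gives the acylindrical piece its full geodesic volume, not half of it: $\mathrm{vol}(\widetilde M)\ge\mathrm{vol}(N_{\mathrm{geo}})$, with no $\tfrac12$. If you could really find your $N$ inside $M$ itself, you would get $\mathrm{vol}(M)>6.89$, which is far stronger than the theorem claims --- a sign that the setup cannot actually be arranged in $M$.

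And indeed it cannot be, at least not by the argument you sketch. Your stage (i) --- extracting from $\dim_{\mathbb{Z}_2}H_1(M;\mathbb{Z}_2)\ge 5$ and the one-dimensional cup-product image a \emph{separating} genus-$2$ incompressible surface in $M$ whose acylindrical side $N$ has $\Hg(N)\ge 5$ --- is asserted rather than argued, and the heuristics offered (``split off $I$-bundles and windows, keep $\mathbb{Z}_2$-homology'') have no rigorous underpinning here. Cutting a closed manifold along a \emph{separating} surface does not let you budget $\dim_{\mathbb{Z}_2}H_1$ to one side; homology classes dual to the separating $S$ are trivial, and the Mayer--Vietoris bookkeeping does not pin down $H_1(N;\mathbb{Z}_2)$ in the way you suggest. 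The place the paper actually carries out this kind of rank accounting is inside Theorem~\ref{genus2or3}, which relies on \cite[\CDStrichotomy]{CDS}, \cite[\CDSnonsep]{CDS}, and, crucially, \cite[Lemma 4.4]{CDS}: when the other side of $S$ is a book of $I$-bundles, it is ``shallow,'' so $\Hg(\widetilde M)\le 1+\Hg(N)$. That is the mechanism that turns $\Hg(\widetilde M)\ge 8$ into $\Hg(N)\ge 7$, with input $8$, not $5$ --- which is also why the paper passes to the cover rather than working in $M$. Finally, even granting the existence of your surface, ``geometrically infinite'' is not a relevant alternative for a closed incompressible surface in a closed hyperbolic $3$-manifold (there is no tameness or covering issue to worry about), so that disclaimer in your write-up does not address a real case. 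In short: your outline misses the $4$-free dichotomy, the double-cover step, the correct source of the factor of $2$, and the actual Heegaard-genus transfer; the stage you acknowledge as the hard one is the one the paper solves by an entirely different route.
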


Theorem \ref{vol6.89} builds on work by Kojima and Miyamoto.
Miyamoto proved that the minimal--volume compact hyperbolic
$3$-manifolds with totally geodesic boundary of genus $g$ decompose
into $g$ regular truncated tetrahedra, each with dihedral angle
$\pi/3g$ \cite[Theorem 5.4]{Miy}.  Their volumes increase with $g$,
taking values $6.452...$ for $g=2$ and $10.428...$ for $g=3$.
Miyamoto's theorem implies in particular that the minimal volume
compact hyperbolic manifolds with totally geodesic boundary of genus
$g$ have Heegaard genus equal to $g+1$.

Prior to the work of Miyamoto, Kojima-Miyamoto established the
universal lower bound of $6.452...$ for the volume of compact
hyperbolic $3$-manifolds with totally geodesic boundary and described
the minimal--volume examples \cite{KM}.  In fact their result is
slightly stronger: if $N$ is not ``simple'', then $\mathrm{vol}(N) >
6.47$.  In the terminology of \cite{KM}, a hyperbolic
manifold with geodesic boundary is simple if it admits a decomposition
into truncated polyhedra with one internal edge.  Such manifolds are
classified in \cite[Lemma 2.2]{KM}, and include those with minimal
volume.

Theorem \ref{vol6.89} can be regarded as an extension of
Kojima-Miyamoto's theorem.  Experimental results of
Frigerio-Martelli-Petronio \cite{FMP} suggest that the next smallest
manifolds with geodesic boundary after those of minimal volume have
volume greater than $7.1$, so it is likely that Theorem \ref{vol6.89}
is not close to sharp.  Nonetheless it seems to be the only result of
its kind in the literature.

Theorem \ref{vol3.44} will be deduced by combining Theorem
\ref{vol6.89} with results from \cite{CDS} and \cite{CS_vol}. The
transition from these results to Theorem \ref{vol3.44} involves
two other results, Theorems \ref{closedvol6.89} and \ref{genus2or3} below,
which are of independent interest.

\newcommand\closedvolsixeightnine{  Let $M$ be a closed, orientable hyperbolic $3$-manifold containing a closed, connected incompressible surface of genus 2 or 3, and suppose that $\Hg(M) \geq 8$.  Then $M$ has volume greater than $6.89$. }
\begin{theorem} \label{closedvol6.89}  \closedvolsixeightnine
\end{theorem}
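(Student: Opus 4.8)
The plan is to cut $M$ along the surface, restrict attention to the acylindrical part of the complement, and apply Theorem~\ref{vol6.89} together with the volume estimates of Miyamoto \cite{Miy} and Kojima-Miyamoto \cite{KM}, comparing volumes with $M$ by means of the Agol-Storm-Thurston inequality.

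Let $S \subset M$ be a closed connected incompressible surface of genus $g \in \{2,3\}$, and put $Y = M \cut S$. Since $M$ is closed, orientable and hyperbolic, $Y$ is compact, orientable, irreducible, atoroidal, and has incompressible boundary: it is connected with $\partial Y$ two copies of $S$ when $S$ is nonseparating, and $Y = Y_1 \sqcup Y_2$ with $\partial Y_i \cong S$ when $S$ is separating. I would pass to the characteristic submanifold $\Sigma \subset Y$ and set $\Gamma = \overline{Y \setminus \Sigma}$, the complementary ``guts''. Each component of $\Gamma$ is acylindrical, atoroidal, and has incompressible boundary, so by Thurston's hyperbolization theorem $\Gamma$ carries a hyperbolic metric $\Gamma^h$ with nonempty totally geodesic boundary; and by the theorem of Agol, Storm and Thurston, $\mathrm{vol}(M) \geq \mathrm{vol}(\Gamma^h)$. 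Thus it would suffice to produce a component $\Gamma_0$ of $\Gamma$, with its induced metric $\Gamma_0^h$, satisfying $\mathrm{vol}(\Gamma_0^h) > 6.89$.

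I would then pin down the topology of $\partial\Gamma$. Its components arise from subsurfaces of the copies of $S$ by attaching the frontier annuli of $\Sigma$; since $M$ is irreducible none is a sphere and since $M$ is atoroidal none is a torus, so each has genus $\geq 2$, while attaching annuli cannot raise Euler characteristic, whence $|\chi(\partial\Gamma)| \leq |\chi(\partial Y)| \leq 4g - 4 \leq 8$. Two inputs then dispose of most cases. First, by \cite{Miy} and \cite{KM} the volume of a compact hyperbolic $3$-manifold with totally geodesic boundary grows at least linearly in $|\chi(\partial)|$, with $|\chi(\partial)| = 2$ already forcing volume $\geq 6.452\ldots$ and $|\chi(\partial)| \geq 4$ forcing volume greater than $6.89$ (in particular whenever $\partial\Gamma_0$ has a genus-$3$ component, or two genus-$2$ components). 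Second, by Theorem~\ref{vol6.89}, if $\partial\Gamma_0$ is a single closed genus-$2$ surface and $\Hg(\Gamma_0) \geq 5$, then $\mathrm{vol}(\Gamma_0^h) > 6.89$. Writing out the cases, I may therefore assume the following \emph{degenerate situation}: every component of $\Gamma$ is either empty or has $\partial\Gamma_0$ a single genus-$2$ surface with $\Hg(\Gamma_0) \leq 4$.

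The remaining and decisive step is to rule out the degenerate situation using $\Hg(M) \geq 8$. The naive amalgamation estimate $\Hg(M) \leq \Hg(Y_1) + \Hg(Y_2) - g$ (together with a self-amalgamation analogue when $S$ is nonseparating) is too weak for this; instead I would exploit the full structure of $Y$ in the degenerate situation — an acylindrical piece of Heegaard genus at most $4$, glued along annuli to $I$-bundles and Seifert-fibred pieces lying over surfaces whose complexity is bounded in terms of $g \leq 3$ — to construct directly a Heegaard splitting of $M$ of genus at most $7$ (and at most $6$ when $g = 2$), contradicting the hypothesis. Making this construction work uniformly over all configurations of $\Sigma$, and in particular verifying that the acylindrical piece surviving the argument of the previous paragraph has Heegaard genus at least $5$ whenever its geodesic boundary is a single genus-$2$ surface (so that Theorem~\ref{vol6.89} genuinely applies), is the main obstacle I anticipate; a subsidiary technical point will be to record the precise form of the Miyamoto / Kojima-Miyamoto volume lower bounds for hyperbolic $3$-manifolds whose totally geodesic boundary is disconnected or of genus $\geq 3$.
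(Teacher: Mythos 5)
Your high-level plan is in the right spirit---cut along an incompressible surface, locate an acylindrical piece of the complement, and feed it to Theorem~\ref{vol6.89} or Miyamoto's bound, transferring back to $M$ via Agol--Storm--Thurston---but there is a genuine gap precisely at what you yourself flag as ``the remaining and decisive step.'' Ruling out the degenerate situation is not a finishing detail; it is the entire topological content of the proof, and the sketch you give (``construct directly a Heegaard splitting of $M$ of genus at most $7$ \dots making this construction work uniformly over all configurations of $\Sigma$ \dots is the main obstacle'') does not contain an argument, only a statement of intent. In particular your approach does not address the case $\Gamma = \emptyset$, i.e.\ $M\cut S$ equal to its own characteristic submanifold, where there is no acylindrical piece at all and one must still bound $\Hg(M)$. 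The relevant technology is the notion of a complementary piece being ``shallow'' and the estimate $\Hg(M) \leq 1 + \Hg(N)$ of \cite[Lemma~4.4]{CDS}, not an ad hoc handle construction, and it is not routine to reproduce.

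There is also a structural difference from the paper's argument that is worth noting, because it is a sign the straightforward plan may not close up. The paper does not cut along the \emph{given} genus-$2$ or genus-$3$ surface. Instead it first applies a trichotomy result (\cite[\CDStrichotomy]{CDS} together with \cite[\CDSnonsep]{CDS}, packaged here as Theorem~\ref{genus2or3}) which, under the hypothesis $\Hg(M)\ge 8$, produces a possibly different separating incompressible surface $S$ of genus up to $4$ for which one of two clean alternatives holds: $\chibar(\kish(M\cut S))\ge 2$, or $M\cut S$ has an acylindrical side $N$ with $\Hg(N)\ge 7$. This freedom to pass to a higher-genus surface is exactly what makes the small-complexity cases absorbable, and is what lets one cite $\Hg(N) \ge 7 \ge 5$ so Theorem~\ref{vol6.89} applies cleanly when $\partial N$ has genus $2$. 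Your outline fixes $S$ at the outset, and you would need to prove (not assert) that in the degenerate configurations the Heegaard genus of $M$ drops below $8$ --- but without the flexibility to replace $S$, that estimate is at best much harder and is not established in the proposal. In short: the geometric/volumetric half of your plan matches the paper (Agol--Storm--Thurston plus Miyamoto/Kojima--Miyamoto plus Theorem~\ref{vol6.89}), but the topological half, where the $\Hg(M)\ge 8$ hypothesis has to be cashed in, is left as an unproved claim, and filling it would essentially amount to reproving Theorem~\ref{genus2or3} and the \cite{CDS} lemmas behind it.
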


Theorem \ref{closedvol6.89} is analogous to \cite[\CDSmanganese]{CDS},
and follows in a similar way: we apply Theorem \ref{vol6.89} and the
results of Miyamoto and Kojima-Miyamoto discussed above, using work of
Agol-Storm-Thurston \cite{ASTD}, to the output of the topological
theorem below.  The notation in the statement is taken from
\cite{CDS}.  In particular, below and in the remainder of this paper,
we will use the term ``simple" as it is defined in \cite[Definitions
1.1]{CDS}, which differs from its usage in \cite{KM} mentioned above.
We also recall the definitions of $M \cut S$ from the first sentence
of \cite{CDS}, and ``$\kish$'' from Definition 1.1 there.  

\newcommand\genustwoorthree
{ Suppose that $M$ is a closed, simple $3$-manifold which contains a
connected closed incompressible surface of genus 2 or 3, and that
$\Hg(M) \geq 8$.  Then $M$ contains a connected closed incompressible
surface $S$ of genus at most 4, such that either $\chibar(\kish(M\cut
S)) \geq 2$, or $S$ is separating and $M \cut S$ has an acylindrical
component $N$ with $\Hg(N) \geq 7$. }
\begin{theorem}\label{genus2or3}  \genustwoorthree
\end{theorem}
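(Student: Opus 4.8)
The plan is to adapt the proof of \cite[\CDSthreeoeight]{CDS}, which treats the case of a genus-$2$ cut surface, so as to allow also a genus-$3$ incompressible surface and to work with the Heegaard-genus threshold $8$. Fix a connected closed incompressible surface $F\subset M$ with $\genus(F)\in\{2,3\}$.

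First I would apply the structural results of \cite{CDS} to $F$: invoking \cite[\CDStrichotomy]{CDS} and \cite[\CDSgenustwo]{CDS} (and, if $F$ is non-separating, also \cite[\CDSnonsep]{CDS}) should yield one of three outcomes. (a)~There is a connected closed incompressible surface $S\subset M$ with $\genus(S)\le 4$ and $\chibar(\kish(M\cut S))\ge 2$. (b)~There is a connected closed \emph{separating} incompressible surface $S$ with $\genus(S)\le 4$ such that $M\cut S$ has an acylindrical component $N$, the other component $N'$ being a book of $I$-bundles up to the attachment of one $1$-handle, so that $\Hg(N')\le\genus(S)+1$. (c)~Every piece occurring in the hierarchy obtained by cutting $M$ along $F$ and along the further surfaces introduced by the machinery is assembled from books of $I$-bundles and Seifert-fibered pieces with base of bounded complexity. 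The surfaces produced in this process differ from $F$ only by tubing along essential annuli in the relevant characteristic submanifolds, each such operation raising the genus by at most one, so that $\genus(S)\le\genus(F)+1\le 4$; verifying that a single such operation suffices when $F$ has genus $3$ is one of the points requiring care. In outcome (a) we are done.

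Next I would eliminate outcome (c) using the hypothesis $\Hg(M)\ge 8$. A book of $I$-bundles has Heegaard genus bounded above in terms of the genus of its boundary---for instance, with connected boundary and nonempty binding it is a handlebody---and the same holds for the Seifert-fibered pieces occurring in the characteristic submanifold. Amalgamating the Heegaard splittings of all the pieces across the surfaces of the hierarchy, each of genus at most $4$, by means of Schultens' amalgamation inequality then bounds $\Hg(M)$ by a quantity at most $7$, contradicting the hypothesis. Hence outcome (b) holds; write $M\cut S=N\sqcup N'$ with $N$ the acylindrical component.

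Finally I would bound $\Hg(N)$ from below. With $g:=\genus(S)\le 4$ and $\Hg(N')\le g+1$, Schultens' amalgamation gives $\Hg(M)\le\Hg(N)+\Hg(N')-g$, whence $\Hg(N)\ge\Hg(M)-\Hg(N')+g\ge 8-(g+1)+g=7$, as required. The main obstacle is the elimination of outcome (c) when $F$ has genus $3$: \cite{CDS} carries out the characteristic-submanifold and hierarchy bookkeeping only for a genus-$2$ cut surface, and with a genus-$3$ surface there are more configurations of the characteristic submanifold and more pieces of the hierarchy to control, so the genus estimate $\genus(S)\le 4$ and the Heegaard-genus estimates---both the threshold $8$ that rules out (c) and the resulting bound $\Hg(N)\ge 7$---must be re-derived by a longer case analysis; this is also where the value $8$ of the threshold is pinned down.
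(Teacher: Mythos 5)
Your high-level plan---invoke the structural theorems of \cite{CDS}, then use the Heegaard-genus hypothesis to pin down the acylindrical piece---is the plan the paper follows, and the final arithmetic ($8 \le \Hg(N)+1$, hence $\Hg(N)\ge 7$) comes out the same. But the obstacle you identify at the end is not actually there. You assume \cite{CDS}'s characteristic-submanifold machinery is ``carried out only for a genus-$2$ cut surface'' and would need to be re-derived for genus~$3$; in fact \cite[\CDStrichotomy]{CDS} and \cite[\CDSnonsep]{CDS} are stated with a genus parameter, and the paper simply applies them with the parameters appropriate to a genus-$3$ surface. The organizing notion you're missing is ``$(g,h)$-small'' (\cite[Definition 1.2]{CDS}): if $M$ is $(5,3)$-small, \cite[\CDSnonsep]{CDS} applied to the given surface already gives $\chibar(\kish(M\cut S))\ge 4$; otherwise $M$ is $(g,3)$-small for $g=3$ or $4$ and has a separating incompressible surface of genus $g$, and \cite[\CDStrichotomy]{CDS} (whose hypotheses include $\Hg(M)\ge 8$) produces a separating $S$ of genus $g$ for which either both sides of $M\cut S$ have nonempty $\kish$ or one side is acylindrical. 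Your ``outcome (c)'' and the proposed hierarchy-wide amalgamation to kill it are not needed: the Heegaard hypothesis is consumed inside \cite[\CDStrichotomy]{CDS}, not by a separate elimination step.

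Two further points. The parenthetical claim that a book of $I$-bundles with connected boundary and nonempty binding ``is a handlebody'' is false: a normal book of $I$-bundles is a simple $3$-manifold and has incompressible boundary, and Lemma \ref{I-bundle genus two boundary} in this paper shows the Heegaard genus is $3$, not $2$, when the boundary is genus $2$. Second, for the bound $\Hg(N)\ge 7$ the paper does not amalgamate Heegaard splittings directly: it shows the non-acylindrical piece $B$ is a book of $I$-bundles, invokes \cite[Lemma 5.3]{CDS} to conclude $B$ is ``shallow relative to $S$,'' and then \cite[Lemma 4.4]{CDS} gives $\Hg(M)\le 1+\Hg(N)$ in one step. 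Your Schultens-amalgamation route would deliver the same inequality \emph{provided} you prove $\Hg(B)\le\genus(S)+1$, which you assert but do not justify; the paper's ``shallow'' route avoids having to compute the Heegaard genus of $B$ at all.
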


Theorem \ref{genus2or3} follows by application of
\cite[\CDStrichotomy]{CDS} jointly with \cite[\CDSnonsep]{CDS}.  It is
the analog of \cite[\CDSgenustwo]{CDS} for manifolds possessing an
incompressible surface of genus 3.

The proof of Theorem \ref{vol3.44} follows the outline of
\cite[\CDSthreeoeight]{CDS}.  In place of the results
concerning 3--free groups used in \cite{CDS}, the proof uses results
of \cite{CS_vol} concerning 4-free groups, and Theorem
\ref{closedvol6.89} above replaces \cite[\CDSmanganese]{CDS}.

All the theorems stated above are proved in Section \ref{sec:closed}.
Sections \ref{sec:prelim}--\ref{sec:111} constitute preparation
for the proof of Theorem
\ref{vol6.89}.  We introduce \textit{return paths}, defined by Kojima
\cite{Ko}, and \textit{$(i,j,k)$ hexagons} in Section
\ref{sec:prelim}.  (The analysis of $(i,j,k)$ hexagons is a crucial
element of \cite{KM}, but we have borrowed our notation for them from
Gabai, Meyerhoff and Milley's paper \cite{GMM}, which is set in a
  different context.)
Lemmas
\ref{borbounds} and \ref{l2vsl1}, which are due to Kojima-Miyamoto
\cite{KM}, respectively give an absolute lower bound on $\ell_1$, and a
lower bound for $\ell_2$ in terms of $\ell_1$.  Lemma \ref{KLM}
refines Lemma \ref{l2vsl1}, giving a bound for $\ell_2$ which improves
that of \cite{KM} when $\ell_1$ is in a certain interval.

Section \ref{sec:KM} describes Kojima-Miyamoto's volume bounds.  The
main result rigorously establishes a lower bound which is apparent
from inspection of \cite[Graph 4.1]{KM}.

\newtheorem*{l1cosh1.215Prop}{Proposition \ref{l1cosh1.215}}
\newcommand\ellonecoshonetwo{ Let $N$ be a hyperbolic $3$-manifold with $\partial N$ connected, totally geodesic, and of genus $2$, satisfying $\cosh \ell_1 \geq 1.215$.  Then $N$ has volume greater than $6.89$.  }
\begin{l1cosh1.215Prop} \ellonecoshonetwo  \end{l1cosh1.215Prop}

\newtheorem*{no(1,1,1)Prop}{Proposition \ref{no(1,1,1)}}
\newcommand\nooneoneone{  Let $N$ be a hyperbolic $3$-manifold with $\partial N$ connected, totally geodesic, and of genus $2$, such that there is no $(1,1,1)$ hexagon in $\widetilde{N}$.  Then $\cosh \ell_1 \geq 1.215$.  }

Also in Section \ref{sec:KM}, Proposition \ref{no(1,1,1)} shows that any manifold with no $(1,1,1)$ hexagon has a shortest return path satisfying $\cosh \ell_1 \geq 1.215$, thus has volume greater than $6.89$ by the above.  In the remaining sections, we explore the topological consequences of the presence of a $(1,1,1)$ hexagon in $\widetilde{N}$, when $\cosh \ell_1 \leq 1.215$.  In Section \ref{sec:111}, we show that under these circumstances, $N$ contains a submanifold $X$ which is a \textit{nondegenerate trimonic manifold relative to $\partial N$} (see Definition \ref{i'm now sir murgatroyd}).

Section \ref{sec:boibs} develops results from the theory of books of $I$--bundles (which were introduced in \cite{ACS}) that we use in Section \ref{sec:trimonic}.  There we introduce trimonic manifolds and prove, in Proposition \ref{it can't happen here}, that such a manifold $X$ does not have the structure of a book of $I$--bundles.  It follows that $X$ has \textit{kishkes} (or \textit{guts}, cf. \cite{ASTD}) with negative Euler characteristic.  Using volume bounds due to Agol-Storm-Thurston, we obtain the following result.

\newtheorem*{hg4orvol7.32Thm}{Theorem \ref{hg4orvol7.32}} 
\newcommand\hgfourorvolseventhreetwo{ Let $N$ be a compact, orientable 
hyperbolic $3$-manifold with $\partial N$ a connected totally geodesic surface of genus 2.  If $\cosh \ell_1 \leq 1.215$ and there is a $(1,1,1)$ hexagon in $\widetilde{N}$, then $\Hg(N) \leq 4$ or $\mathrm{vol}(N) > 7.32$. } 
\begin{hg4orvol7.32Thm}  \hgfourorvolseventhreetwo  \end{hg4orvol7.32Thm}

Together with the results of Section \ref{sec:KM}, this implies Theorem \ref{vol6.89}.


\section{Geometric preliminaries}  \label{sec:prelim}

Suppose $N$ is a hyperbolic $3$-manifold with totally geodesic boundary.  Its universal cover $\widetilde{N}$ may be identified with a convex subset of $\mathbb{H}^3$ bounded by a collection of geodesic hyperplanes.  The following terminology was introduced in \cite{Ko} and used extensively in \cite{KM}; we will use it here as well.

\begin{definition}  Let $N$ be a hyperbolic $3$-manifold with totally geodesic boundary, and let $\widetilde{N} \subset \mathbb{H}^3$ be its universal cover.  A \textit{short cut} in $\widetilde{N}$ is a geodesic arc joining the closest points of two distinct components of $\partial \widetilde{N}$.  A \textit{return path} in $N$ is the projection of a short cut under the universal covering map.  \end{definition}

It is an easy consequence of the definitions that each return path is a homotopically nontrivial geodesic arc properly immersed in $N$, perpendicular to $\partial N$ at each of its endpoints.  Corollary 3.3 of \cite{Ko} asserts that for a fixed hyperbolic manifold $N$ with geodesic boundary and $K \in \mathbb{R}$, there are only finitely many return paths in $N$ with length less than $K$.  Thus the collection of return paths may be enumerated as $\{\lambda_1,\lambda_2,\hdots\}$, where for each $i \in \mathbb{N}$, the length of $\lambda_i$ is less than or equal to the length of $\lambda_{i+1}$.  Fixing such an arrangement, we will denote by $\ell_i$ the length of $\lambda_i$.

It will prove important to understand the distance in $\partial N$, properly interpreted, between endpoints of return paths of $N$.

\begin{definition}  Let $N$ be a compact hyperbolic $3$-manifold with connected totally geodesic boundary, and suppose $\lambda$ is a short cut in $\widetilde{N}$ projecting to $\lambda_i$.  Fix an endpoint $x$ of $\lambda$, and let $\Pi$ be the component of $\partial \widetilde{N}$ containing $x$.  For $j \in \mathbb{N}$ define $d_{ij}$ to be the minimum, taken over all short cuts $\lambda'$ projecting to $\lambda_j$ such that $\lambda'$ has an endpoint $y \in \Pi$ and $\lambda' \neq \lambda$, of $d(x,y)$.  \end{definition}

The requirement above that $\lambda'$ be distinct from $\lambda$ ensures that $d_{ii} > 0$.  In general, $d_{ij}$ is the length of the shortest geodesic arc in $\partial N$ joining an endpoint of $\lambda_i$ to an endpoint of $\lambda_j$.

A crucial tool for understanding the relationships between the lengths $\ell_i$ and distances $d_{ij}$ is a class of totally geodesic hexagons in $\widetilde{N}$ which have short cuts as edges.  The two lemmas below describe the relevant hexagons.

\begin{lemma}  \label{geodhex} Suppose that $\Pi_1$, $\Pi_2$ and $\Pi_3$ are mutually disjoint geodesic planes
in $\HH^3$. For each two-element subset $\{i,j\}$ of $\{1,2,3\}$, let
$\lambda_{ij}$ denote the common perpendicular to $\Pi_i$ and $\Pi_j$. Then
$\lambda_{12}$, $\lambda_{13}$ and $\lambda_{23}$ lie in a common plane $\Pi$.  \end{lemma}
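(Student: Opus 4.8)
The plan is to exploit symmetry. For each pair $\{i,j\}$, reflection in the geodesic plane $\Pi_i$ is an orientation-reversing isometry $R_i$ of $\HH^3$, and the key observation is that $R_i$ permutes the three common perpendiculars in a controlled way: since $\lambda_{ij}$ meets $\Pi_i$ orthogonally, $R_i$ maps the geodesic line through $\lambda_{ij}$ to itself (reversing it across the foot on $\Pi_i$), and similarly for $\lambda_{ik}$. So $R_i$ fixes (setwise) the two perpendiculars emanating from $\Pi_i$. First I would show there is a plane $\Pi$ containing both $\lambda_{12}$ and $\lambda_{13}$: any two geodesic lines in $\HH^3$ that do not meet and are not asymptotic have a unique common perpendicular, and in particular two disjoint geodesic segments always lie in a common geodesic plane unless they are "skew"; but $\lambda_{12}$ and $\lambda_{13}$ share the property of being orthogonal to $\Pi_1$, so both are contained in the family of geodesics orthogonal to $\Pi_1$ through points of $\Pi_1$ — I would instead argue directly that the two feet of $\lambda_{12}$ and $\lambda_{13}$ on $\Pi_1$ together with the orthogonality determine a unique plane $\Pi$ meeting $\Pi_1$ orthogonally and containing both arcs (take $\Pi$ to be the plane orthogonal to $\Pi_1$ containing the geodesic of $\Pi_1$ through those two feet).

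Having produced such a $\Pi \supset \lambda_{12} \cup \lambda_{13}$, I would next show $\Pi \perp \Pi_2$ and $\Pi \perp \Pi_3$. Indeed $\Pi$ contains $\lambda_{12}$, which meets $\Pi_2$ orthogonally; a geodesic plane containing a geodesic that hits another geodesic plane orthogonally must itself be orthogonal to that plane (reflection $R_2$ fixes $\lambda_{12}$ setwise, hence maps the plane $\Pi$ through $\lambda_{12}$ that is orthogonal to $\Pi_2$... more simply: $\Pi\cap\Pi_2$ is a geodesic line $m$, and $\lambda_{12}\subset\Pi$ meets $\Pi_2$ orthogonally at a point of $m$, forcing the dihedral angle along $m$ to be $\pi/2$). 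Symmetrically $\Pi\perp\Pi_3$. Now the reflection $R_\Pi$ in $\Pi$ fixes $\Pi_1,\Pi_2,\Pi_3$ each setwise (a plane orthogonal to $\Pi$ is preserved by $R_\Pi$). Therefore $R_\Pi$ permutes the three planes trivially and hence fixes each common perpendicular $\lambda_{ij}$ setwise. In particular $R_\Pi$ fixes $\lambda_{23}$ setwise.

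Finally, the fixed-point set of the reflection $R_\Pi$ is exactly $\Pi$. A geodesic segment fixed setwise by a reflection is either contained in the mirror or meets it orthogonally in a single point; the latter is impossible here because $\lambda_{23}$ is the \emph{unique} common perpendicular to the disjoint planes $\Pi_2,\Pi_3$, so if it met $\Pi$ at one point orthogonally, $R_\Pi(\lambda_{23})$ would be a \emph{different} common perpendicular unless $\lambda_{23}\subset\Pi$ — but it is fixed setwise, and a segment orthogonal to $\Pi$ at an interior point is reversed, not fixed, as a directed segment; checking endpoints shows $R_\Pi$ would swap the feet on $\Pi_2$ and $\Pi_3$, contradicting that these lie on distinct planes each preserved by $R_\Pi$. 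Hence $\lambda_{23}\subset\Pi$, completing the proof. The main obstacle I anticipate is the bookkeeping in this last step — cleanly ruling out the "orthogonal crossing" alternative — and the cleanest route is probably to observe at the outset that $R_\Pi$ preserves each $\Pi_i$ and therefore preserves each ordered pair $(\Pi_i,\Pi_j)$, hence fixes the associated common perpendicular together with its two endpoints, and a geodesic with two fixed points under a reflection lies in the mirror.
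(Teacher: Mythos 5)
Your proof is correct, and its first two-thirds coincide with the paper's argument: you construct the plane $\Pi$ perpendicular to $\Pi_1$ through the two feet of $\lambda_{12}$ and $\lambda_{13}$ (the paper calls it $\Sigma$, with the two feet joined by the line $L\subset\Pi_1$), and you then observe that containing $\lambda_{12}$ forces $\Pi\perp\Pi_2$ and likewise $\Pi\perp\Pi_3$. Where you diverge is the final step. The paper stays inside the two-dimensional geometry of $\Sigma$: it takes the traces $X_i=\Pi_i\cap\Sigma$, forms their in-plane common perpendicular $Y\subset\Sigma$, and checks directly that $Y$ is perpendicular to each $\Pi_i$ in $\HH^3$ (using $Y\perp X_i$ inside $\Sigma$ together with $\Sigma\perp\Pi_i$), so that $Y=\lambda_{23}$ by uniqueness. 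You instead invoke the reflection $R_\Pi$: it preserves each $\Pi_i$ setwise because $\Pi\perp\Pi_i$, hence preserves each ordered pair $(\Pi_i,\Pi_j)$, hence fixes $\lambda_{23}$ setwise and fixes its two endpoints (each being the intersection of two $R_\Pi$-invariant sets), so $\lambda_{23}$ lies in the fixed-point set $\Pi$. Your symmetry argument is a genuinely different finish; it is cleaner in the streamlined form you give at the end (and your earlier worry about the ``orthogonal crossing'' alternative evaporates once you fix the two endpoints rather than just the segment setwise), while the paper's version has the virtue of being explicitly constructive and making the role of the right-angled hexagon geometry inside $\Sigma$ visible, which is what the subsequent Lemma \ref{rtanghex} exploits. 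One small caveat that both arguments handle only implicitly: the construction of $L$ (resp.\ your ``geodesic of $\Pi_1$ through those two feet'') presupposes the feet $p_2,p_3$ are distinct; if $p_2=p_3$ then $\lambda_{12}$ and $\lambda_{13}$ lie on the same line perpendicular to $\Pi_1$, which is then perpendicular to all three planes, so the three segments are collinear and the conclusion is trivial. Worth a sentence, but not a real gap.
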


\Proof
We may assume that the three lines $\lambda_{12}$, $\lambda_{13}$ and $\lambda_{23}$ do
 not all coincide, and so by symmetry we may assume that $\lambda_{12}\ne
 \lambda_{13}$. For $i=2,3$ the line $\lambda_{1i}$ meets $\Pi_1$ orthogonally at
 some point $p_i$. Let $L\subset\Pi$ denote the line joining $p_2$ and $p_3$,
 and let $\Sigma$ denote the plane which meets $\Pi$ perpendicularly
 along $L$. It is clear that $\Sigma$ contains $\lambda_{12}$ and $\lambda_{13}$. This
 implies that $\Sigma$ meets the planes $\Pi_2$ and $\Pi_3$
 perpendicularly. For $i=2,3$, let $X_i$ denote the line
 $\Pi_i\cap\Sigma$. Since $\Pi_2\cap\Pi_3=\emptyset$, the lines
 $X_2,X_3\subset\Sigma$ are disjoint. Hence the common perpendicular
 to $X_2$ and $X_3$ is a line $Y\subset\Sigma$. For $i=2,3$, the line
 $Y$ meets the line $X_i\subset\Pi_i$  perpendicularly, and $Y$ is contained
 in the plane $\Sigma$ which is perpendicular to $\Pi_i$; hence $Y$ is
 itself perpendicular to $\Pi_i$. It follows that $Y=\lambda_{23}$. Thus the
 plane $\Sigma$ contains $\lambda_{12}$, $\lambda_{13}$ and $\lambda_{23}$.
\EndProof

\begin{lemma}  \label{rtanghex}  
Let $N$ be a compact hyperbolic $3$-manifold with totally geodesic
boundary, and suppose $\Pi_1$, $\Pi_2$, and $\Pi_3$ are distinct
components of $\partial \widetilde{N}$.  Let $\Pi$ be the plane
containing the short cuts $\lambda_{12}$, $\lambda_{23}$ and
$\lambda_{13}$, which exists by Lemma \ref{geodhex}.  Let $C$ be the
right--angled hexagon in $\Pi$ with edges $\lambda_{ij}$ and the
geodesic arcs in the $\Pi_i$ joining their endpoints.  Then $C \subset
\widetilde{N}$, and $C \cap \partial \widetilde{N} = \cup_i (C \cap
\Pi_i)$.  \end{lemma}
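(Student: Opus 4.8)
The plan is to exploit the convexity of $\widetilde{N}$, together with the standard description of $\partial\widetilde{N}$ in this setting: since $\partial N$ is an embedded totally geodesic surface, the components of $\partial\widetilde{N}$ are pairwise disjoint geodesic planes, and $\widetilde{N}$ lies in one of the two closed half-spaces bounded by each of them. Index these planes as $\{\Pi_\beta\}$, with $\Pi_1,\Pi_2,\Pi_3$ among them, and for each $\beta$ let $H_\beta$ be the closed half-space bounded by $\Pi_\beta$ with $\widetilde{N}\subseteq H_\beta$. By construction, the vertices of the hexagon $C$ are exactly the six endpoints of $\lambda_{12},\lambda_{13},\lambda_{23}$; its three other edges, say $a_1,a_2,a_3$, satisfy $a_i\subset\Pi_i$; and, being right-angled, $C$ is a convex hexagon.

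First I would prove $C\subseteq\widetilde{N}$. Each $\lambda_{ij}$ is a geodesic segment whose two endpoints lie in $\partial\widetilde{N}\subseteq\widetilde{N}$, so by convexity $\lambda_{ij}\subset\widetilde{N}$. Since $C$ is a convex hexagon whose vertex set consists precisely of the endpoints of $\lambda_{12},\lambda_{13},\lambda_{23}$, it equals the convex hull in $\Pi$ of $\lambda_{12}\cup\lambda_{13}\cup\lambda_{23}$; as $\widetilde{N}$ is convex and contains these three segments, $C\subseteq\widetilde{N}$. Next, since $\partial\widetilde{N}=\bigcup_\beta\Pi_\beta$, we have $C\cap\partial\widetilde{N}=\bigcup_\beta (C\cap\Pi_\beta)$, so it remains only to show that $C\cap\Pi_\beta=\emptyset$ whenever $\Pi_\beta\notin\{\Pi_1,\Pi_2,\Pi_3\}$.

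For this last claim, fix such a $\Pi_\beta$ and suppose toward a contradiction that $z\in C\cap\Pi_\beta$. If $\Pi=\Pi_\beta$, then $\lambda_{12}\subset\Pi_\beta$, so an endpoint of $\lambda_{12}$ lies in $\Pi_1\cap\Pi_\beta$, contradicting the disjointness of distinct components of $\partial\widetilde{N}$. Hence $\Pi\neq\Pi_\beta$, and the two planes meet in a geodesic line $L$ through $z$; moreover $\Pi\cap H_\beta$ is one of the two closed half-planes of $\Pi$ bounded by $L$, and it contains $C$ because $C\subseteq\widetilde{N}\subseteq H_\beta$. Thus $L$ is a support line of the convex polygon $C$, so $C\cap L$ is a face of $C$ (a single vertex or a single edge) and in particular $z\in\partial C$. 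This face cannot be an edge: each $a_i$ lies in $\Pi_i$ and is therefore disjoint from $L\subset\Pi_\beta$, while $\lambda_{ij}\subset L$ would force an endpoint of $\lambda_{ij}$ into $\Pi_i\cap\Pi_\beta$. Hence $C\cap L=\{z\}$ is a single vertex of $C$; but every vertex of $C$ is an endpoint of some $\lambda_{ij}$ and so lies on $\Pi_1\cup\Pi_2\cup\Pi_3$, whence $z\in\Pi_i\cap\Pi_\beta$ for some $i\in\{1,2,3\}$, the final contradiction. The main obstacle is exactly this last step — excluding any contact between $C$ and an extraneous boundary plane — and within it the delicate point is the possibility of a single point of contact at a vertex of $C$, which is ruled out only because the vertices of $C$ already lie on $\Pi_1,\Pi_2,\Pi_3$. (I use throughout that the configuration is non-degenerate, so that $C$ is an embedded convex hexagon.)
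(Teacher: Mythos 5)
Your proof is correct, and it takes a genuinely different route from the paper's. The paper works entirely inside the plane $\Pi$: it views $\Pi\cap\widetilde{N}$ as a convex region bounded by the disjoint geodesics $\Pi\cap\partial\widetilde{N}$, and for any such boundary geodesic $\gamma$ other than the $\Pi\cap\Pi_i$, it uses the circular order of ideal endpoints on $\partial_\infty\Pi$ to place the endpoints of $\gamma$ strictly ``between'' the endpoints of two of the lines $\Pi\cap\Pi_i$, and then observes that the corresponding $\lambda_{ij}$ separates $\gamma$ from the rest of $C$. You instead cleanly split the two conclusions: first you show $C\subseteq\widetilde{N}$ by noting $C$ is the convex hull of $\lambda_{12}\cup\lambda_{13}\cup\lambda_{23}$ and invoking convexity of $\widetilde{N}$; then, for each extraneous boundary plane $\Pi_\beta$, you use $\widetilde{N}\subseteq H_\beta$ to make $L=\Pi\cap\Pi_\beta$ a support line of $C$, so that $C\cap L$ is a face, and you rule out both the edge case (each edge lies on some $\Pi_i$ or has an endpoint on some $\Pi_i$, so inclusion in $L\subset\Pi_\beta$ contradicts $\Pi_i\cap\Pi_\beta=\emptyset$) and the vertex case (vertices already lie on $\Pi_1\cup\Pi_2\cup\Pi_3$). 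The support-line/face argument buys modularity and avoids reasoning about ideal circular order; the paper's approach is more compact but asks the reader to track the cyclic arrangement of six ideal points. Both ultimately rest on convexity of $\widetilde{N}$ and disjointness of distinct components of $\partial\widetilde{N}$; your explicit remark that the configuration is non-degenerate (so that $C$ really is an embedded convex hexagon) is appropriate, since the lemma's statement presupposes the hexagon exists.
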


\begin{proof}  $\Pi \cap \widetilde{N}$ is a convex subset of $\Pi$ bounded by the family of disjoint geodesics $\Pi \cap \partial \widetilde{N}$, which includes $\Pi \cap \Pi_i$, $i \in \{1,2,3\}$.  If $\gamma$ is another component of $\Pi \cap \partial \widetilde{N}$, then by definition $\Pi \cap \Pi_1$, $\Pi \cap \Pi_2$, and $\Pi \cap \Pi_3$ are all contained in the component of $\Pi - \gamma$ intersecting $\widetilde{N}$.  Thus there is a single component of $\partial_{\infty} \Pi - \left( \cup_{i =1}^3 \partial_{\infty} \Pi_i \right)$ containing both endpoints of $\gamma$.  Since $\gamma$ is a component of $\Pi \cap \partial \widetilde{N}$, its endpoints are between those of two different geodesics $\Pi \cap \Pi_i$, say $\Pi \cap \Pi_1$ and $\Pi \cap \Pi_2$.  Then since the geodesic containing $\lambda_{12}$ intersects $\Pi_1$ and $\Pi_2$ perpendicularly, it is disjoint from $\gamma$ and contained in the component of $\Pi - \gamma$ intersecting $\widetilde{N}$.  The remainder of $C$ is on the other side of $\lambda_{12}$ from $\gamma$.  Since $\gamma$ was arbitrary, the lemma follows.  \end{proof}

\begin{definition}  Let $N$ be a compact hyperbolic $3$-manifold with totally geodesic boundary, and let $C$ be a right-angled hexagon supplied by Lemma \ref{rtanghex}.  We call the edges of $C$ which are short cuts \textit{internal}, and the remaining edges \textit{external}.  If the internal edges project to $\lambda_i$, $\lambda_j$, and $\lambda_k$, we call $C$ an \textit{$(i,j,k)$ hexagon}.  \end{definition}

This terminology matches that defined in \cite{GMM} in the context of
horospheres and cusped hyperbolic manifolds.  As we will see
below, $(i,j,k)$ hexagons were used extensively in the analysis of
\cite{KM}, although not by name.

The isometry class of a right-angled hexagon is determined by the lengths of three of its pairwise nonadjacent sides.  If $\{\ell, \ell', \ell''\}$ is a collection of such lengths, and $d$ is the length of a side abutting those with lengths $\ell$ and $\ell'$, the \textit{right-angled hexagon rule} (cf. eg. \cite[Theorem 3.5.13]{Ratcliffe}) describes $d$ in terms of the other lengths.  \begin{align}
 \cosh d = \frac{\cosh \ell \cosh \ell' + \cosh \ell''}{\sinh \ell \sinh \ell'}  \label{hexrule} \end{align}  

A prototypical application of the right--angled hexagon rule is the following initial lemma, proved in \cite{KM} during the proof of Lemma 3.2.

\begin{lemma}[Kojima-Miyamoto] \label{d11vsl1} Suppose $N$ is a compact hyperbolic $3$-manifold with connected totally geodesic boundary, and let $R$ be the function of $\ell_1$ defined by the following formula.  \begin{align}  \cosh R = \sqrt{1+ \frac{1}{2\cosh \ell_1 -2}} \label{R} \end{align} 
Then $d_{11} \geq 2R$.  \end{lemma}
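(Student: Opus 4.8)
The plan is to fix an arbitrary pair of short cuts realizing the quantity $d_{11}$, build a right-angled hexagon from them together with a third boundary plane, and then read off the desired bound from the hexagon rule \eqref{hexrule}. So first I would take a short cut $\lambda$ projecting to $\lambda_1$, an endpoint $x$ of $\lambda$, and the component $\Pi$ of $\partial\widetilde{N}$ containing $x$, as in the definition of $d_{11}$; and I would let $\lambda'$ be an arbitrary short cut projecting to $\lambda_1$ with an endpoint $y\in\Pi$ and $\lambda'\ne\lambda$. It suffices to show $d(x,y)\ge 2R$. Let $\Pi_2$ and $\Pi_3$ be the components of $\partial\widetilde{N}$ containing the endpoints of $\lambda$ and $\lambda'$ other than $x$ and $y$ respectively. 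Since a short cut joins \emph{distinct} components, $\Pi\ne\Pi_2$ and $\Pi\ne\Pi_3$; moreover $\Pi_2\ne\Pi_3$, for otherwise $\lambda$ and $\lambda'$ would both be common perpendiculars of the ultraparallel planes $\Pi$ and $\Pi_2$, and since such a common perpendicular is unique this would force $\lambda=\lambda'$, contrary to hypothesis. Hence $\Pi$, $\Pi_2$, $\Pi_3$ are three distinct, and therefore pairwise disjoint, components of $\partial\widetilde{N}$.

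Next I would apply Lemmas \ref{geodhex} and \ref{rtanghex} to the triple $\Pi,\Pi_2,\Pi_3$. The common perpendicular $\mu$ of $\Pi_2$ and $\Pi_3$ is a short cut, hence projects to $\lambda_j$ for some $j$, and $\lambda$, $\lambda'$, $\mu$ are the internal edges of a right-angled hexagon $C\subset\widetilde{N}$ whose external edges are the geodesic arcs in $\Pi$, $\Pi_2$, $\Pi_3$ joining the feet of these perpendiculars. In particular the external edge of $C$ lying in $\Pi$ is the geodesic segment from $x$ to $y$, and (as $\Pi$ is totally geodesic) its length is $d(x,y)$. In any right-angled hexagon the three internal edges are pairwise nonadjacent, and the external edge in $\Pi$ abuts the internal edges $\lambda$ and $\lambda'$; so \eqref{hexrule} applies with $\ell=\ell'=\ell_1$ (the common length of $\lambda$ and $\lambda'$) and $\ell''=\ell_j$ (the length of $\mu$), giving
\[
\cosh d(x,y) \;=\; \frac{\cosh^2\ell_1+\cosh\ell_j}{\sinh^2\ell_1} \;\ge\; \frac{\cosh^2\ell_1+\cosh\ell_1}{\sinh^2\ell_1} \;=\; \frac{\cosh\ell_1}{\cosh\ell_1-1}\,,
\]
where the inequality uses $\ell_j\ge\ell_1$ and the last equality uses $\sinh^2\ell_1=(\cosh\ell_1-1)(\cosh\ell_1+1)$. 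On the other side, squaring \eqref{R} and using $\cosh 2R=2\cosh^2 R-1$ gives $\cosh 2R=1+\tfrac{1}{\cosh\ell_1-1}=\tfrac{\cosh\ell_1}{\cosh\ell_1-1}$. Thus $\cosh d(x,y)\ge\cosh 2R$, and since $\cosh$ is increasing on $[0,\infty)$ while $d(x,y),2R\ge 0$, we conclude $d(x,y)\ge 2R$. As $\lambda'$ was arbitrary, $d_{11}\ge 2R$.

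I expect no serious obstacle here: the two $\cosh$ identities are routine, and once the hexagon $C$ is produced the inequality is immediate. The points that need a little care are the verification that $\Pi_2\ne\Pi_3$ (so that one genuinely obtains a $(1,1,j)$ hexagon rather than a degenerate configuration) and the observation that the cyclic order internal--external--internal--external--internal--external of the edges of $C$ places $\lambda$, $\lambda'$ and the side of length $d(x,y)$ in exactly the mutual position demanded by \eqref{hexrule} — that is, with the two equal-length sides nonadjacent and the unknown side abutting both of them.
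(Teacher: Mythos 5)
Your proof is correct and follows essentially the same route as the paper: build a $(1,1,k)$ hexagon from $\lambda$, $\lambda'$, and the short cut joining the other two boundary planes, apply the hexagon rule \eqref{hexrule} together with $\ell_k \geq \ell_1$, and convert via the half-angle/double-angle identity. Your version is if anything slightly more complete, since you explicitly check that the three boundary planes are distinct (via uniqueness of the common perpendicular of ultraparallel planes) before invoking Lemma \ref{geodhex}, a point the paper leaves tacit.
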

  
\begin{proof}  Let $\lambda$ and $\lambda'$ be short cuts in $\widetilde{N}$ with length $\ell_1$ whose feet are at distance $d_{11}$ on some boundary component.  The short cut $\lambda''$ joining the boundary components containing the other feet of $\lambda$ and $\lambda'$ has length $\ell_k$ for some $k \geq 1$.  Applying the right--angled hexagon rule to the $(1,1,k)$ hexagon containing $\lambda, \lambda'$, and $\lambda''$ yields the following inequality. \begin{align}
 \cosh d_{11} = \frac{\cosh^2 \ell_1 + \cosh \ell_k}{\sinh^2 \ell_1} \geq \frac{\cosh^2 \ell_1 + \cosh \ell_1}{\sinh^2 \ell_1} = 1 + \frac{1}{\cosh \ell_1 - 1}  \label{KMd11} \end{align}
The lemma now follows upon applying the ``half--angle formula" for hyperbolic cosine, $\cosh R = \sqrt{(\cosh (2R) + 1)/2}$.  \end{proof}

Note that $R$ is decreasing as a function of $\ell_1$.  Hence using Lemma \ref{d11vsl1}, an upper bound on $d_{11}$ implies a lower bound on $\ell_1$.  An upper bound for $d_{11}$ obtains from area considerations.

\begin{lemma}[\cite{KM}, Corollary 3.5] \label{borbounds}  Let $N$ be a compact hyperbolic $3$-manifold with $\partial N$ connected, totally geodesic, and of genus $2$.  Then the following bounds hold for $d_{11}$ and $\ell_1$.  \begin{align*}
  & \cosh d_{11} \leq 3+2\sqrt{3} & & \cosh \ell_1 \geq \frac{3+\sqrt{3}}{4} \end{align*}  \end{lemma}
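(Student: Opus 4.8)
The plan is to establish the two bounds separately, with the second being an immediate consequence of the first via Lemma \ref{d11vsl1}. Granting the bound $\cosh d_{11}\le 3+2\sqrt3$, one applies Lemma \ref{d11vsl1}: since $R$ is a decreasing function of $\ell_1$ and $d_{11}\ge 2R$, an upper bound on $d_{11}$ forces a lower bound on $\ell_1$. Concretely, from $\cosh d_{11}\le 3+2\sqrt3$ and the half-angle formula $\cosh R=\sqrt{(\cosh 2R+1)/2}$, together with $2R\le d_{11}$, one extracts $\cosh^2 R\le (\cosh d_{11}+1)/2 \le (4+2\sqrt3)/2 = 2+\sqrt3$; inverting the defining relation \eqref{R}, $\cosh^2 R = 1 + \tfrac{1}{2\cosh\ell_1-2}$, gives $\tfrac{1}{2\cosh\ell_1-2}\le 1+\sqrt3$, hence $2\cosh\ell_1-2\ge \tfrac1{1+\sqrt3}=\tfrac{\sqrt3-1}{2}$, so $\cosh\ell_1\ge 1+\tfrac{\sqrt3-1}{4}=\tfrac{3+\sqrt3}{4}$. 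That is the stated bound, so the whole content is the area estimate giving $\cosh d_{11}\le 3+2\sqrt3$.

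For the area estimate I would argue as follows. Fix a boundary component $\Pi$ of $\partial\widetilde N$ and consider the feet in $\Pi$ of all shortest return paths emanating from (lifts meeting) $\Pi$; equivalently, work in the genus-$2$ totally geodesic surface $\partial N$ and consider the endpoints there of copies of $\lambda_1$. Around each such foot, place a disk of radius $d_{11}/2$ in $\partial N$: by the definition of $d_{11}$, distinct feet are at distance at least $d_{11}$ in $\partial N$, so these disks have disjoint interiors. Each such disk, being embedded and of radius $d_{11}/2$ in a hyperbolic surface, has area equal to that of a hyperbolic disk of that radius, namely $2\pi(\cosh(d_{11}/2)-1)$. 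The key combinatorial input is that there must be \emph{at least two} such feet in $\partial N$ — indeed at least one short cut realizing $\ell_1$ has both endpoints on $\Pi$, or there are two short cuts each contributing a foot to $\Pi$; in either case one gets two disjoint embedded disks of radius $d_{11}/2$. Hence $2\cdot 2\pi(\cosh(d_{11}/2)-1)\le \mathrm{area}(\partial N)=2\pi(2\cdot 2-2)=4\pi$ by Gauss–Bonnet for the genus-$2$ surface, giving $\cosh(d_{11}/2)\le 2$, and then the double-angle formula $\cosh d_{11}=2\cosh^2(d_{11}/2)-1\le 2\cdot 4-1=7<3+2\sqrt3$. (The stated bound $3+2\sqrt3\approx 6.46$ is actually sharper than $7$, so the genuine argument must be slightly more careful: one should count more feet, or use that the two disks around the two feet of a single shortest return path, together with the return path's own geometry, pack more efficiently — this is where the precise constant $3+2\sqrt3=(1+\sqrt3)^2$, i.e. $\cosh(d_{11}/2)\le(1+\sqrt3)/\sqrt2$… in fact $\cosh(d_{11}/2)^2\le 2+\sqrt3$ — comes from, presumably by packing \emph{three} disjoint half-disks or using a more refined region.)

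The main obstacle is pinning down exactly which disjoint regions in $\partial N$ to use so as to recover the sharp constant $3+2\sqrt3$ rather than the crude $7$. The honest approach, following \cite[Corollary 3.5]{KM}, is presumably: at a foot $x$ of a shortest return path $\lambda_1$, consider the embedded disk of radius $d_{11}/2$, but observe that near the \emph{other} foot $x'$ of the same $\lambda_1$ on $\partial N$ there is a second such disk, and additionally the local picture forces a third disjoint disk (coming from a second shortest return path, whose existence follows from the enumeration of return paths and the fact that a single $\lambda_1$ accounts for two feet) — yielding $3\cdot 2\pi(\cosh(d_{11}/2)-1)\le 4\pi$, hence $\cosh(d_{11}/2)\le 1+\tfrac{2}{3}$, still not matching. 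Alternatively, and more likely, one packs half-disks along the return path as in a collar argument, or invokes that three mutually-perpendicular configurations of boundary planes (the $(1,1,1)$ hexagon picture) force the feet to be the vertices of an embedded hyperbolic polygon whose area is bounded below by a function of $d_{11}$ alone. I would reconstruct this by drawing the $(1,1,k)$ hexagon of Lemma \ref{d11vsl1} explicitly in $\partial N$, noting its external edges lie in $\partial N$ and bound (together with collar pieces) an embedded region of controlled area, and then optimizing. Modulo identifying that region correctly, the inequality and the deduction of the $\ell_1$ bound are routine hyperbolic trigonometry.
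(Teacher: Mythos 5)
Your reduction of the $\ell_1$ bound to the $d_{11}$ bound via Lemma \ref{d11vsl1} is correct, and you correctly compute that a naive area estimate (two disjoint disks of radius $d_{11}/2$ fit inside a genus-$2$ surface of area $4\pi$) gives only $\cosh(d_{11}/2)\le 2$, i.e.\ $\cosh d_{11}\le 7$, which is weaker than the stated $3+2\sqrt3\approx 6.46$.  Where the proposal stalls is precisely where the paper invokes the missing ingredient: this is not a ``count more feet'' or ``pack a cleverer region'' argument, but an application of \emph{B\"or\"oczky's theorem} bounding the local density of a packing of $\mathbb{H}^2$ by congruent disks of radius $r$.  That density is at most $d(r)=\dfrac{3\alpha(\cosh r-1)}{\pi-3\alpha}$, where $\alpha$ is the vertex angle of the equilateral hyperbolic triangle of side $2r$; in the present situation the two disks of radius $R$ around the feet of $\lambda_1$ lift to an equivariant packing of $\Pi\subset\partial\widetilde N$, so the \emph{global} packing density on $\partial N$, namely $\dfrac{2\cdot 2\pi(\cosh R-1)}{4\pi}=\cosh R-1$, is bounded by $d(R)$.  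This yields $1\le\frac{3\alpha}{\pi-3\alpha}$, hence $\alpha\ge\pi/6$, hence $\cos\alpha\le\sqrt3/2$; combining with the hyperbolic law of cosines $\cos\alpha=\frac{2\cosh^2 R-1}{2\cosh^2 R}$ gives $\cosh^2R\le 2+\sqrt3$, i.e.\ $\cosh R\le(1+\sqrt3)/\sqrt2$, which is strictly better than the naive $\cosh R\le 2$ and is exactly what produces the constant $3+2\sqrt3$ after the double-angle formula.

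Your alternative speculations do not repair the gap.  The return path $\lambda_1$ has exactly two feet on $\partial N$, so there is no third embedded disk to add; and the $(1,1,k)$-hexagon geometry governs the relation between $d_{11}$ and $\ell_1$ (that is Lemma \ref{d11vsl1}), not the isoperimetric constraint on $\partial N$.  The sharpening from $7$ to $3+2\sqrt3$ is entirely a consequence of the density inequality: even a packing that covers the whole surface cannot be locally denser than the triangular configuration, and that local constraint is what forces $\alpha\ge\pi/6$.  Without naming and using B\"or\"oczky's density bound (or some equivalent sphere-packing estimate), the argument cannot reach the stated constant.
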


\begin{proof}  By Lemma \ref{d11vsl1}, there is a disk of radius $R$ embedded on $\partial N$ around each endpoint of $\lambda_1$, and these two disks do not overlap.  They lift to a radius $R$ disk packing on a component $\Pi$ of $\partial \widetilde{N}$, invariant under the action of $\pi_1 \partial N$.  Bor\"oczky's Theorem \cite{Bor} gives an upper bound $d(R)$ on the local density of a radius $R$ disk packing of $\mathbb{H}^2$.  Since the packing in question is invariant under the action by covering transformations for the compact surface $\partial N$, $d(R)$ bounds the global density of the packing there, yielding the following inequality.
$$ \frac{4\pi(\cosh R -1)}{4\pi} \leq d(R)  $$
The numerator on the left hand side of the inequality above is twice the area of a hyperbolic disk of radius $R$, and the denominator is the area of $\partial N$.  (This follows from the Gauss-Bonnet theorem and the fact that $\partial N$ has genus 2.)  

Let $\alpha$ be the angle at a vertex of a hyperbolic equilateral triangle $T(R)$ with side length $2R$.  $T(R)$ has area $\pi - 3\alpha$, and the intersection with $T(R)$ of disks of radius $R$ centered at its vertices occupies a total area of $3\left( \frac{\alpha}{2\pi} \right) 2\pi(\cosh R - 1)$.  Bor\"oczky's bound $d(R)$ is defined as the ratio of these areas; thus after simplifying the inequality above we obtain the one below.
$$ \cosh R -1 \leq \frac{3\alpha(\cosh R -1)}{\pi - 3\alpha}  $$
Solving for $\alpha$ yields $\alpha \geq \pi/6$.  The hyperbolic law of cosines describes the relationship between $\alpha$ and the side length of $T(R)$:
$$ \cos \alpha = \frac{\cosh^2 (2R) - \cosh (2R)}{\sinh^2 (2R)} = \frac{\cosh (2R)}{\cosh (2R) +1} = \frac{2\cosh^2 R - 1}{2\cosh^2 R}  $$
Using the fact that $\cos \alpha \leq \sqrt{3}/2$ and solving for $\cosh R$ yields $\cosh R \leq (1+\sqrt{3})/\sqrt{2}$.  The inequality for $\cosh d_{11}$ follows using the ``hyperbolic double angle formula", and the inequality for $\cosh \ell_1$ follows upon solving the formula of Lemma \ref{d11vsl1}.
\end{proof}

The following lemma combines Lemmas 4.2 and 4.3 of \cite{KM} with the discussion below them.

\begin{lemma}[Kojima-Miyamoto] \label{l2vsl1}  Let $N$ be a compact hyperbolic $3$-manifold with $\partial N$ totally geodesic, connected, and of genus $2$.  Define quantities $R'$, $E$, and $F$, depending on $\ell_1$, by the following equations.  \begin{align}   & \cosh R' = 3 - \cosh R \label{R'}  \\
   & \cosh E =  \frac{2}{\cosh^2 (R+R') \cdot \tanh^2 \ell_1 - 1} + 1 \label{E} \\
   & \cosh F =  \sqrt{ \frac{\cosh \ell_1 + 1}{\cosh 2R' -1} +1} \label{F}  \end{align}
Then $\ell_2 \geq \max\{\ell_1,\min \{E,F\}\}$.  \end{lemma}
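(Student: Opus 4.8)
Since the return paths are indexed by nondecreasing length, $\ell_2\ge\ell_1$ holds trivially, so the real content is the bound $\ell_2\ge\min\{E,F\}$. I would obtain this from a dichotomy on the mutual position of $\lambda_1$ and $\lambda_2$ on $\partial N$: in one case I would prove $\ell_2\ge E$, in the other $\ell_2\ge F$, so that $\ell_2\ge\min\{E,F\}$ in all cases. This is the route of \cite{KM} (their Lemmas~4.2 and~4.3 together with the paragraph combining them), and the two tools are the disk packing on $\partial N$ induced by $\lambda_1$ and repeated use of the right-angled hexagon rule \eqref{hexrule} applied to suitable $(i,j,k)$ hexagons.

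The packing is set up exactly as in the proof of Lemma~\ref{borbounds}. By Lemma~\ref{d11vsl1} the feet of $\lambda_1$ on $\partial N$ are pairwise at distance at least $2R$, so the radius-$R$ disks about them are embedded and disjoint; lifting to a component $\Pi$ of $\partial\widetilde N$ gives a $\pi_1(\partial N)$-invariant packing of $\Pi$ by radius-$R$ disks whose quotient surface $\partial N$ has area $4\pi$ (genus $2$). Feeding this into a Böröczky-type local-density/covering estimate, of the kind used to prove Lemma~\ref{borbounds} but now tracking the portion of $\Pi$ \emph{not} covered by the disks, is what produces the auxiliary radius $R'$ with $\cosh R'=3-\cosh R$ (equation \eqref{R'}): concretely, one finds a point of $\partial N$ at distance at least $R'$ from every foot of $\lambda_1$, and, more usefully, one controls how the feet of $\lambda_2$ can sit relative to those of $\lambda_1$.

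For the dichotomy, fix a foot $x$ of $\lambda_1$ and a nearest foot $y$ of $\lambda_2$ on $\partial N$ (on a common boundary component), and ask whether $d_{\partial N}(x,y)\ge R+R'$. In the ``spread'' case, $d_{\partial N}(x,y)\ge R+R'$, I would apply \eqref{hexrule} to a $(1,2,k)$ hexagon having $\lambda_1$ and a short cut projecting to $\lambda_2$ among its internal edges, with $\lambda_k$ the short cut joining the remaining two boundary planes, so $\ell_k\ge\ell_1$; the lower bound $d_{\partial N}(x,y)\ge R+R'$ together with the $\tanh\ell_1$ factors coming from the hexagon geometry turns the hexagon identity, after the half-angle/double-angle manipulation of $\cosh$ already used above, into exactly $\cosh\ell_2\ge\cosh E$ with $E$ as in \eqref{E}. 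In the ``clustered'' case, $d_{\partial N}(x,y)<R+R'$, the radius-$R'$ disk about $x$ confines the nearby feet of $\lambda_2$, and combining this with the packing estimate and \eqref{hexrule} applied to a $(1,1,k)$- or $(2,2,k)$-type hexagon reading off the relevant boundary distance forces $\cosh\ell_2\ge\cosh F$ with $F$ as in \eqref{F}. In either case $\ell_2\ge\min\{E,F\}$, and with $\ell_2\ge\ell_1$ this gives the lemma.

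The logical skeleton above is routine; the effort—and the place I expect to be the main obstacle—is the quantitative bookkeeping: extracting the precise constant $\cosh R'=3-\cosh R$ from the packing estimate, choosing correctly in each case which hexagon and which of its six sides carries the inequality, and verifying that the half-angle/double-angle identities for $\cosh$ really do collapse the hexagon rule to the closed forms \eqref{R'}, \eqref{E}, and \eqref{F}. I would guard against algebra slips by evaluating all three formulas numerically (e.g.\ at $\cosh\ell_1=1.4$) and confirming they are mutually consistent and consistent with \cite{KM}.
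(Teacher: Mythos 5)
Your overall plan---produce $R'$ from the fact that $\partial N$ has area $4\pi$, then pass boundary-distance bounds through the right-angled hexagon rule---matches the paper's, but the directional assignment in your dichotomy is backwards, and fixing it changes where the area/packing estimate actually enters. The hexagon-rule bound behind $E$, obtained from a $(1,2,k)$ hexagon using $\ell_k\ge\ell_1$, reads $\cosh\ell_2 \ge 1+2/(\cosh^2 d_{12}\tanh^2\ell_1-1)$, and the right-hand side is \emph{decreasing} in $d_{12}$; so it is an \emph{upper} bound $d_{12}\le R+R'$ that yields $\ell_2\ge E$, not a lower bound. In your ``spread'' case $d_{12}\ge R+R'$, plugging in gives only $\cosh\ell_2\ge(\text{something at most }\cosh E)$, which is vacuous; the same monotonicity holds for the $d_{22}$-inequality producing $F$. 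Thus it is the clustered case that gives $\ell_2\ge E$ directly, and the spread case that needs the area estimate---not the other way around. Concretely, with $d_{12}>R+R'$, if also $d_{22}>2R'$ then radius-$R'$ disks about the feet of $\lambda_2$ would be disjoint from each other and from the radius-$R$ disks about the feet of $\lambda_1$; those four disks have total area $4\pi\bigl((\cosh R-1)+(\cosh R'-1)\bigr)=4\pi$, which is exactly the content of $\cosh R'=3-\cosh R$ and is impossible on the genus-$2$ surface $\partial N$ of area $4\pi$. Hence $d_{22}\le 2R'$, and the $(2,2,k)$ hexagon inequality gives $\ell_2\ge F$.

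A smaller correction: $R'$ in this lemma comes from the naive area count just described, not from a Bor\"oczky local-density estimate. The Bor\"oczky refinement is precisely what Lemma~\ref{KLM} adds (producing $R''<R'$ on a range of $\ell_1$), so importing Bor\"oczky into this step would obscure what that refinement actually buys you.
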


\begin{proof}  The right--angled hexagon rule can be used to obtain lower bounds on $\ell_2$ depending on values for $\ell_1$ and $d_{12}$ or $d_{22}$, respectively. \begin{align}
  &  \cosh \ell_2 \geq  \frac{2}{\cosh^2 d_{12} \tanh^2 \ell_1 - 1} + 1  \label{d12} \\
  &  \cosh \ell_2 \geq \sqrt{ \frac{\cosh \ell_1+1}{\cosh d_{22} - 1} + 1}   \label{d22} \end{align}
This is recorded in Lemma 4.2 of \cite{KM}.  Therefore an upper bound for $d_{12}$ or $d_{22}$ gives a lower bound for $\ell_2$ in terms of $\ell_1$.

Recall from above that an upper bound on $\ell_1$ gives a lower bound of $2R(\ell_1)$, on $d_{11}$, the shortest distance between feet of two different shortest short cuts on (any) component of $\partial \widetilde{N}$.  Hence disks $U$ and $U'$ of radius $R$ in $\partial N$, each centered at a foot of the shortest return path, are embedded and nonoverlapping.  Area considerations imply that  $R'$ is an upper bound for the radii of two equal--size nonoverlapping disks in $\partial N - \mathrm{int}(U \cup U')$ \cite[Lemma 4.3]{KM}.  It follows that at least one of $d_{12} \leq R + R'$ or $d_{22}  \leq 2R'$  holds, since otherwise there would be disks of radius $R'$ embedded around the feet of the second--shortest return path without overlapping $U$ and $U'$.  The inequalities (\ref{d12}) and (\ref{d22}) thus imply that $\ell_2 \geq \min \{E,F\}$.  Note that by definition $\ell_2 \geq \ell_1$, which gives the lemma.\end{proof}

The following lemma contains a new observation improving on the bound of Lemma \ref{l2vsl1} for values of $\ell_1$ with hyperbolic cosine near $1.4$.

\begin{lemma}  \label{KLM}  Let $N$ be a compact hyperbolic $3$-manifold with $\partial N$ totally geodesic, connected, and of genus $2$.  Let $R''$ be determined by the following equation.  \begin{align}
  & \cosh R'' = \frac{1}{\sqrt{2(1-\cos (2\pi/9))}} = 1.4619....  \label{R''} \end{align}
Define quantities $L$ and $M$ depending on $\ell_1$ by \begin{align}
  & \cosh L = \frac{2}{\cosh^2(2R'')\tanh^2 \ell_1 - 1} + 1 \label{L}  \\
  & \cosh M = \sqrt{ \frac{\cosh \ell_1+1}{\cosh(2R'') - 1} + 1}  \label{M}  \end{align}
For any value of $\ell_1$ with \begin{align}
  & \cosh \ell_1 \leq \frac{\cos (2\pi/9)}{2\cos (2\pi/9) - 1} = 1.439..., \label{ell1forR'} \end{align}
$\ell_2$ is bounded below by $\max \{ \ell_1,\min\{E,F\},\min\{L,M\}\}$.  \end{lemma}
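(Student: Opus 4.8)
The plan is to mimic the packing argument that establishes Lemma \ref{l2vsl1}, but with a sharper radius bound coming from a disk packing of $\partial N$ by \emph{three} equal disks rather than two. Recall that Lemma \ref{borbounds} guarantees $\cosh\ell_1 \geq (3+\sqrt3)/4 \approx 1.183$, and the hypothesis \eqref{ell1forR'} restricts us to $\cosh\ell_1 \leq 1.439\ldots$. On this interval I want to show that around each of the (at most two) feet of $\lambda_1$ on a component $\Pi$ of $\partial\widetilde N$ one can embed a disk of radius $R \geq R''$, where $R''$ is the quantity of \eqref{R''}, and moreover that $R''$ is itself a lower bound for the common radius of two further equal nonoverlapping disks centered at the feet of $\lambda_2$. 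Granting this, the right-angled hexagon inequalities \eqref{d12} and \eqref{d22} already recorded in the proof of Lemma \ref{l2vsl1}, applied with $d_{12} \leq 2R''$ or $d_{22} \leq 2R''$, yield respectively $\cosh\ell_2 \geq \cosh L$ or $\cosh\ell_2 \geq \cosh M$; hence $\ell_2 \geq \min\{L,M\}$, and combining with Lemma \ref{l2vsl1} and the trivial bound $\ell_2 \geq \ell_1$ gives the claim.

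The first step is to identify the correct packing. We have at most two feet of $\lambda_1$ and at most two feet of $\lambda_2$ on $\Pi$ — but after projecting to $\partial N$ and accounting for the definitions of $d_{11}$, $d_{12}$, $d_{22}$, what we really have is a packing of the genus-$2$ surface $\partial N$ by four disks (two of radius $R$ about the feet of $\lambda_1$, two of radius $r$ about the feet of $\lambda_2$), with the mutual distances controlling overlaps. The point of the range \eqref{ell1forR'} is that on it the value $R(\ell_1)$ from Lemma \ref{d11vsl1} is large enough that the \emph{four}-disk count forces $r \geq R''$ unless one of $d_{12}$ or $d_{22}$ is already small. I would run a density estimate exactly as in the proof of Lemma \ref{borbounds}: a packing of $\partial N$ by disks, where $\partial N$ has area $4\pi$, has density at most the Böröczky density $d(\rho)$ of the largest radius $\rho$ present; expanding $d(\rho)$ via the equilateral-triangle formula and feeding in the areas of the four disks produces an inequality in $\cosh R$ and $\cosh r$. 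Setting $\cosh R$ to its minimum permitted value on the interval \eqref{ell1forR'} and solving for $r$ should give $\cosh r \leq$ (or $\geq$, in the appropriate direction) the constant $1.4619\ldots$ of \eqref{R''}, where the angle $2\pi/9$ enters precisely as the vertex angle forced when four equal-ish disks are packed optimally on a genus-$2$ surface — the analogue of the $\pi/6$ that appeared for two disks. The constant $\cos(2\pi/9)/(2\cos(2\pi/9)-1) = 1.439\ldots$ in \eqref{ell1forR'} is exactly the threshold at which this four-disk bound starts to beat the two-disk bound of Lemma \ref{l2vsl1}, which is why the hypothesis takes that form.

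The main obstacle, and the step requiring the most care, is the combinatorial/geometric bookkeeping in the packing argument: one must verify that the two $R$-disks and two $r$-disks genuinely form a packing — i.e. that the relevant pairwise distances are bounded below by sums of radii — and handle the degenerate cases where some feet coincide or where $\lambda_1$ (or $\lambda_2$) is a loop so that its two feet are identified on $\partial N$. In those degenerate configurations the disk count drops and one must check the resulting weaker inequality still yields $R'' $; I expect this to come down to the same Gauss–Bonnet/Böröczky computation with a smaller number of disks, and it should still go through on the interval \eqref{ell1forR'}, but it is where the proof could get fiddly. A secondary technical point is confirming that the disks of radius $R''$ about the feet of $\lambda_2$ are genuinely \emph{embedded} in $\partial N$ and not merely immersed, which follows because $R'' \leq R \leq R(\ell_1)$ and $2R(\ell_1) \leq d_{22}$ would otherwise be violated — but this needs to be stated cleanly. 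Once these cases are dispatched, the passage from the distance bounds $d_{12} \leq 2R''$, $d_{22} \leq 2R''$ to the stated lower bounds on $\ell_2$ is purely formula \eqref{hexrule} applied as in Lemma \ref{l2vsl1}, with no new ideas.
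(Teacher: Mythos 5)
Your proposal is built around the right key idea — apply B\"or\"oczky's theorem to a four-disk packing to sharpen the two-disk bound used in Lemma~\ref{borbounds}, and then feed the resulting distance bound into \eqref{d12} and \eqref{d22} — and this is exactly what the paper does. But the execution is garbled in a few places that, taken literally, break the argument.

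First, a small slip: you open by announcing a packing by ``three equal disks rather than two'' and then later correctly switch to four; it is four throughout (two feet of $\lambda_1$, two of $\lambda_2$). Second, and more substantively, the logic of your packing step is inverted. You say you want to show ``$R''$ is itself a \emph{lower} bound for the common radius of two further equal nonoverlapping disks centered at the feet of $\lambda_2$,'' and then immediately conclude ``applied with $d_{12}\le 2R''$ or $d_{22}\le 2R''$.'' If $R''$ were a lower bound on the embeddable radius, you would have $d_{12}$ and $d_{22}$ bounded \emph{below}, which (since the right-hand sides of \eqref{d12} and \eqref{d22} decrease in $d_{12}$, $d_{22}$) gives only a weaker estimate on $\ell_2$. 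The paper's proof runs the other way, as a contradiction: B\"or\"oczky's theorem applied to four disks of \emph{equal} radius on the genus-$2$ surface gives the upper bound $R''$ on that common radius (this is where the vertex angle $2\pi/9$ comes from, replacing the $\pi/6$ of the two-disk computation); the hypothesis \eqref{ell1forR'} together with \eqref{KMd11} guarantees $d_{11}\ge 2R''$; and if in addition $d_{12}>2R''$ and $d_{22}>2R''$, then one would have four nonoverlapping embedded disks of radius $R''$ on $\partial N$, contradicting the B\"or\"oczky bound. Hence $\min\{d_{12},d_{22}\}\le 2R''$, and \eqref{d12}, \eqref{d22} give $\ell_2\ge\min\{L,M\}$.

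Third, you introduce two distinct radii $R$ and $r$ for the $\lambda_1$-disks and $\lambda_2$-disks respectively, which needlessly complicates the B\"or\"oczky step: the theorem as applied here is a statement about packings of equal disks, and a two-radius version would require a separate argument. The paper sidesteps this by taking all four disks to have radius exactly $R''$, which is exactly what the contradiction argument needs. Finally, the degenerate-case bookkeeping you worry about (identified feet, looped return paths, immersed rather than embedded disks) is largely dissolved by running the argument by contradiction: you are not constructing a packing, you are assuming for contradiction that the distances are all large, and that assumption directly gives the embedded packing whose existence is then ruled out.
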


\begin{proof}  
Applying Bor\"oczky's theorem as in Lemma \ref{borbounds}, but this
time to four disks of equal radius packed on $\partial N$, we find
that the radius is bounded above by the quantity $R''$
specified by the formula above.
For $\ell_1$ satisfying the bound of (\ref{ell1forR'}), the inequality
(\ref{KMd11}) implies that $d_{11}$ is at least $2R''$.  If each of
$d_{12}$ and $d_{22}$ were also larger than $2R''$, then disks of
radius $R''$ around the feet of both the shortest and second--shortest
return paths would be embedded and nonoverlapping, a contradiction.
Thus $\min\{d_{12},d_{22}\} \leq 2R''$.  Plugging into inequalities
(\ref{d12}) and (\ref{d22}) gives the result.  \end{proof}

The quantities $L$ and $M$ defined above offer a better lower bound on $\ell_2$ than $E$ and $F$, for values of $\ell_1$ with $1.367 \leq \cosh \ell_1 \leq 1.439$.  This is because $E$ and $F$ use the quantity $R'$ of equation (\ref{R'}), and by definition twice the area of a hyperbolic disk of radius $R'$ plus twice the area of a hyperbolic disk of radius $R$ is $4\pi$.  However it is impossible to entirely cover a compact surface of genus 2 with 4 nonoverlapping embedded disks.  Bor\"oczky's theorem bounds the proportion of the area which can be covered by four disks of equal radius, and this supplies $R''$.  This is less than $R'$ for $R$ determined below.
$$ \cosh R \leq 3 - \frac{1}{\sqrt{2(1-\cos (2\pi/9))}}  $$
Solving equation (\ref{R}) for $\cosh \ell_1$, we find that this occurs when $\cosh \ell_1 \geq 1.366...$.  It is at this point that the bound above for $d_{22}$ given by $\cosh(K)$ is better than the bound above given by $2R'$.  The bound on $d_{12}$ given by $\cosh(K)$ becomes better than that given by $R + R'$ somewhat earlier, at least for values of $\ell_1$ with $\cosh \ell_1 \geq 1.25$.  


\section{Volume with a long return path} \label{sec:KM}

By definition, in a hyperbolic manifold $N$ with totally geodesic boundary, $\ell_1/2$ is the height of a maximal embedded collar of $\partial N$.  The volume of such a collar bounds the volume of $N$ below, but leaves a lot out.  The volume bounds of \cite{KM} are obtained by taking a larger collar of $\partial N$ and using separate means to understand the region where it overlaps itself.

\begin{definition}  The \textit{muffin of height $\ell$}, here denoted $\mathit{Muf}_{\ell}$, is the hyperbolic solid obtained by rotating a hyperbolic pentagon with base of length $\ell$, opposite angle $2\pi/3$, and all other angles $\pi/2$, about its base (see \cite[Figure 3.1]{KM}).  \end{definition}

It is a standard fact of hyperbolic trigonometry (see \cite[Theorem
3.5.14]{Ratcliffe}) that positive real numbers $a$, $b$, and $c$
determine a right-angled hexagon in $\mathbb{H}^2$, unique up to
isometry, with alternate sides of lengths $a$, $b$, and $c$.  For
$\ell > 0$, the hexagon specified by $a=b=c=\ell$ 
has an orientation preserving symmetry group of order three which
cyclically permutes the sides of length $\ell$.  The hyperbolic
pentagon mentioned in the definition above is a fundamental domain for
this symmetry group.

For a compact hyperbolic $3$-manifold $N$, a copy of $\mathit{Muf}_{\ell_1}$ is embedded in $\widetilde{N}$ around each short cut with length $\ell_1$.  Lemma 3.2 of \cite{KM} asserts that $\mathit{Muf}_{\ell_1}$ is embedded in $N$ by the universal covering.  Let $A$ be the length of a side joining a vertex with angle $\pi/2$ to the vertex with angle $2\pi/3$ of the pentagon rotated to construct the muffin.  In terms of $\ell_1$, $A$ is given by the formula below.  \begin{align}
  \cosh A = \sqrt{\frac{2}{3}(\cosh \ell_1 + 1)} \label{A}  \end{align}
A collar of $\partial N$ with height less than both $A$ and $\ell_2/2$ has its region of self--overlap entirely contained in $\mathit{Muf}_{\ell_1}$.  This yields the fundamental volume inequality of \cite{KM}, stated there in the proof of Proposition 4.1, which we formulate in the following lemma.

\begin{lemma}[Kojima-Miyamoto]  Let $N$ be a compact hyperbolic $3$-manifold with boundary, and let $H = \min\{A,\ell_2/2\}$.  With $\mathit{Muf}_{\ell_1}$ as defined above and $R$ as in Lemma \ref{d11vsl1}, we have the following bound.  \begin{align} \label{KMvol}
  \mathrm{vol}(N) \geq \mathrm{vol}(\mathit{Muf}_{\ell_1}) + \pi(2 - \cosh R)(2H + \sinh(2H)) \end{align}  \end{lemma}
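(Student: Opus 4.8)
The plan is to exhibit inside $N$ a disjoint union of two pieces whose volumes are computable: the embedded muffin $\mathit{Muf}_{\ell_1}$ about a shortest return path $\lambda_1$, and a genuinely embedded collar of $\partial N$ of height $H$ lying away from that muffin. Since $\partial N$ is totally geodesic there are no focal points, so the normal exponential map $\Phi\colon\partial N\times[0,H]\to N$, $\Phi(p,t)=\exp_p(t\nu_p)$, is an immersion, and in Fermi coordinates about $\partial N$ the metric is $dt^2+\cosh^2 t\,ds^2_{\partial N}$. Hence, using $\mathrm{area}(\partial N)=4\pi$ (Gauss--Bonnet, genus $2$),
\[
  \int_{\partial N\times[0,H]}\Phi^*\,dV_N \;=\; \mathrm{area}(\partial N)\int_0^H\cosh^2 t\,dt \;=\; 4\pi\cdot\tfrac14\bigl(2H+\sinh 2H\bigr)\;=\;\pi\bigl(2H+\sinh 2H\bigr).
\]

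The key geometric input is that, because $H\le A$ and $H\le\ell_2/2$, every self-overlap of $\Phi$ is confined to the embedded muffins about the return paths of length $\ell_1$. Indeed, two distinct components $\Pi,\Pi'$ of $\partial\widetilde N$ contain points of $\widetilde N$ within distance $H$ of both only if $d(\Pi,\Pi')\le 2H\le\ell_2$; since distinct boundary planes lie at distance $\ell_k\ge\ell_1$ for some $k$, this forces $d(\Pi,\Pi')=\ell_1$, and the locus within distance $H$ of both then lies inside the muffin about the short cut realizing this distance, by the estimate built into the value of $A$ in (\ref{A}). Moreover the nearest-point shadow on $\partial N$ of such a muffin lies inside the union of the two radius-$R$ disks about the feet of the corresponding $\ell_1$-return path, and these disks are embedded and pairwise disjoint by Lemma \ref{d11vsl1} together with $d_{11}\ge 2R$. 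Let $D_1,D_2\subset\partial N$ be the radius-$R$ disks about the two feet of $\lambda_1$, and set $D=D_1\cup D_2$. Then $\Phi$ restricted to $(\partial N\smallsetminus\mathrm{int}\,D)\times[0,H]$ is injective with image disjoint from $\mathit{Muf}_{\ell_1}$ (if there are several return paths of length $\ell_1$, one retains all of the associated muffins and deletes all of the associated disks, which only strengthens the estimate that follows). This step — the claim that for $H\le A$ the doubly covered part of the collar is swallowed by the muffin, and that the muffin's footprint sits inside the radius-$R$ disks — is the heart of the matter and is essentially \cite[Lemma 3.2]{KM}, so in practice one may simply cite that result; it is also the place where the formula (\ref{A}) for $A$ is used in an essential way.

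Granting this, $N$ contains the disjoint union of $\mathit{Muf}_{\ell_1}$ with the embedded collar $\Phi\bigl((\partial N\smallsetminus\mathrm{int}\,D)\times[0,H]\bigr)$, whence
\[
  \mathrm{vol}(N)\;\ge\;\mathrm{vol}(\mathit{Muf}_{\ell_1})+\int_{(\partial N\smallsetminus\mathrm{int}\,D)\times[0,H]}\Phi^*\,dV_N
  \;=\;\mathrm{vol}(\mathit{Muf}_{\ell_1})+\mathrm{area}(\partial N\smallsetminus D)\cdot\tfrac14\bigl(2H+\sinh 2H\bigr),
\]
the equality using injectivity of $\Phi$ on that region and the Fermi-coordinate volume form. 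Finally each radius-$R$ disk has area $2\pi(\cosh R-1)$, so $\mathrm{area}(\partial N\smallsetminus D)=4\pi-4\pi(\cosh R-1)=4\pi(2-\cosh R)$, where $2-\cosh R>0$ since Lemma \ref{borbounds} gives $\cosh\ell_1\ge(3+\sqrt 3)/4>7/6$. Substituting produces exactly the bound (\ref{KMvol}). Apart from the geometric claim discussed above, every step is an elementary computation, so the single genuine obstacle is reproducing (or invoking) the Kojima--Miyamoto confinement lemma for the self-overlap.
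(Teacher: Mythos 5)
Your proof takes essentially the same route as the paper: both identify the footprint of the embedded muffin on $\partial N$ as the two radius-$R$ disks, establish that the height-$H$ collar of the complement of those disks embeds in $N$ disjointly from $\mathit{Muf}_{\ell_1}$ by invoking the Kojima--Miyamoto confinement of self-overlaps (the paper points to the proof of \cite[Proposition 4.1]{KM}, you to \cite[Lemma 3.2]{KM}), and then add volumes using the Fermi-coordinate collar formula and $\mathrm{area}(\partial N)=4\pi$. The extra detail you supply --- the normal exponential map and the $d(\Pi,\Pi')\le 2H\le\ell_2$ reduction --- merely unpacks what the paper cites as known, so the two arguments coincide in structure and in their essential geometric inputs.
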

  
\begin{proof}  The intersection of $\mathit{Muf}_{\ell_1}$ with $\partial N$ is the union of disks $U$ and $U'$ of radius $R$, the quantity defined in Lemma \ref{d11vsl1}.  This is because the pentagon rotated to construct the muffin is a fundamental domain for the orientation--preserving symmetry group of a $(1,1,1)$ hexagon, thus its sides adjacent to the base each have length $R$ (again see \cite[Figure 3.1]{KM}).  It follows from Lemma \ref{d11vsl1} that $U$ and $U'$ are embedded in $\partial N$ without overlapping.

The area of $\partial N - (U \cup U')$ is $4\pi - 4\pi(\cosh R - 1)$.  A collar of $\partial N - (U \cup U')$ of height $H$ is embedded in $N$ without overlapping $\mathit{Muf}_{\ell_1}$, and the bound of the lemma is obtained by adding their volumes.  This uses the following well known formula for the volume $V$ of a collar of height $H$ in $\mathbb{H}^3$ of a set in a plane with area $A$:  $V = A \cdot (2H + \sinh 2H)/4$.  \end{proof}

A formula for the volume of $\mathit{Muf}_{\ell_1}$ is recorded in \cite[Lemma 3.3]{KM}.  The output of inequality (\ref{KMvol}) is recorded as a function of $\ell_1$ in \cite[Graph 4.1]{KM}.  Lower bounds for $H$ are determined at various points on the graph by $A$, the quantities $E$ and $F$ of Lemma \ref{l2vsl1}, and $\ell_1$ itself.  The point on Graph 4.1 of \cite{KM} above $\cosh \ell_1 = 1.215$ is just to the left of the intersection of the curves labeled $H=A$ and $H = F/2$.  A computation gives a volume bound of $7.007\hdots$ here.  To the right of $\cosh \ell_1 = 1.215$, inspection of Graph 4.1 reveals a single local minimum at about $\cosh \ell_1 = 1.4$.  Numerical experimentation indicates that the volume bound at this minimum is just larger than $6.89$.  In this section, our main task is to prove this rigorously.

\begin{l1cosh1.215Prop} \ellonecoshonetwo \end{l1cosh1.215Prop}

\begin{remark}  \label{vol6.94} Although Proposition \ref{l1cosh1.215} probably follows solely from results of \cite{KM}, our proof uses Lemma \ref{KLM}.  In fact, numerical experimentation indicates that the volume bound above could be improved to about $6.94$ with this result.  Since it is easier and suffices for applications, we prove only the bound of $6.89$ here.  \end{remark}

The lemma below proves useful for several results in this section.

\begin{lemma}  For $\frac{3+\sqrt{3}}{4} \leq \cosh \ell_1 \leq 1.4$, the quantity $E$ of (\ref{E}) is decreasing in $\ell_1$.   \label{monotone} \end{lemma}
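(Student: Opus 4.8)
The plan is to reduce the statement to a monotonicity fact about $\cosh(R+R')$ and then exploit the relation $\cosh R + \cosh R' = 3$, which is immediate from (\ref{R'}). Write $D = D(\ell_1) = \cosh^2(R+R')\tanh^2\ell_1 - 1$, so that $\cosh E = 1 + 2/D$ by (\ref{E}). Since $\cosh$ is increasing on $[0,\infty)$ and $x\mapsto 1+2/x$ is decreasing on $(0,\infty)$, it suffices to show that $D$ is positive and increasing on the range $\frac{3+\sqrt3}{4}\le\cosh\ell_1\le 1.4$; and since $\tanh\ell_1$ is positive and increasing, for this it is enough to show that $\cosh(R+R')$ is positive and increasing in $\ell_1$ there, together with the single numerical check that $D>0$ at the left endpoint (where $\cosh(R+R')\tanh\ell_1$ turns out to be about $1.43$, so $D\approx 1.06>0$; positivity then propagates rightward by monotonicity).

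Because $\ell_1\mapsto\cosh\ell_1$ is increasing, I will work with the variable $c=\cosh\ell_1$. From (\ref{R}), $\cosh R=\sqrt{(2c-1)/(2c-2)}$ is a decreasing function of $c$, and on the interval in question $\cosh R<2$: indeed $\cosh R<2$ is equivalent to $c>7/6$, and $\frac{3+\sqrt3}{4}>7/6$ since $6\sqrt3>10$. At the endpoints $c=\frac{3+\sqrt3}{4}$ and $c=1.4$, the value of $\cosh R$ is $(1+\sqrt3)/\sqrt2$ (cf.\ Lemma \ref{borbounds}) and $3/2$ respectively. Substituting $t=\cosh R-\tfrac32$, so that $\cosh R=\tfrac32+t$ and $\cosh R'=\tfrac32-t$ by (\ref{R'}), the parameter $t$ is a decreasing function of $c$, decreasing from $(1+\sqrt3)/\sqrt2-\tfrac32\approx 0.432$ to $0$ across the interval; in particular $t\in[0,\tfrac12)$ throughout.

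Now a direct computation using $\cosh(R+R')=\cosh R\cosh R'+\sinh R\sinh R'$ together with $\sinh^2=\cosh^2-1$ gives
\begin{align*}
 \cosh(R+R') \;=\; \frac94 - t^2 + \sqrt{\Bigl(t^2-\tfrac14\Bigr)\Bigl(t^2-\tfrac{25}{4}\Bigr)} \;=:\; \phi(t),
\end{align*}
where the radicand is strictly positive for $t\in[0,\tfrac12)$. Differentiating,
\begin{align*}
 \phi'(t) \;=\; t\left(-2 + \frac{4t^2-13}{2\sqrt{\bigl(t^2-\tfrac14\bigr)\bigl(t^2-\tfrac{25}{4}\bigr)}}\right),
\end{align*}
and since $t\ge 0$ and $4t^2-13<0$ on $[0,\tfrac12)$, the bracketed factor is negative, so $\phi'(t)\le 0$. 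Hence $\phi$ is decreasing on $[0,\tfrac12)$. As $c\mapsto t$ is decreasing and $c$ is increasing in $\ell_1$, it follows that $\cosh(R+R')=\phi(t)$ is increasing in $\ell_1$ on the given range, which is exactly what the reduction in the first paragraph requires.

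I do not expect a genuine obstacle here: the only care needed is in tracking the directions of monotonicity through the substitutions $\ell_1\rightsquigarrow c\rightsquigarrow t$, and in recording that $\cosh R<2$ (equivalently $t<\tfrac12$) on the whole interval so that the square root defining $\phi$ is real and smooth. The point that makes the argument short is that the identity $\cosh R+\cosh R'=3$ turns $\cosh(R+R')$ into an explicit algebraic function of the single parameter $t$, after which everything comes down to the elementary sign computation for $\phi'$.
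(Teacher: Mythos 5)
Your proof is correct and follows essentially the same strategy as the paper's: both reduce the claim to showing that $\cosh(R+R')$ is increasing in $\ell_1$, and both do so by combining the fact that $R$ is decreasing in $\ell_1$ with the relation $\cosh R' = 3 - \cosh R$ to establish that $R+R'$ is decreasing in $R$ once $\cosh R > 3/2$. The only difference is computational — the paper differentiates $R' = \cosh^{-1}(3-\cosh R)$ directly, while you symmetrize via $t = \cosh R - 3/2$ and differentiate the explicit algebraic expression $\phi(t)$ — and you additionally record the positivity of the denominator $D$, which the paper leaves implicit.
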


\begin{proof}  The formula (\ref{R'}) defines $R'$ in terms of $R$ by
$$R' = \cosh^{-1}(3- \cosh R) = \log \left[(3-\cosh R) + \sqrt{(3-\cosh R)^2-1}\right]$$
Taking a derivative with respect to $R$, one finds that $R+R'$ increases with $R$ for $1 < \cosh R < 3/2$, reaching a maximum at $\cosh R = 3/2$, and decreases when $\cosh R > 3/2$.  This implies that $\cosh (R+R')$ is an increasing function of $\ell_1$ on the interval $\frac{3+\sqrt{3}}{4} \leq \cosh \ell_1 \leq 1.4$, since values of $\ell_1$ in this interval give $R$--values between $\frac{3}{2}$ and $\frac{1+\sqrt{3}}{\sqrt{2}}$, and $R$ is a decreasing function of $\ell_1$.  Since the hyperbolic tangent is an increasing function, the lemma follows from the definition of $E$ in Lemma \ref{l2vsl1}.  \end{proof}

In the proof of Proposition \ref{l1cosh1.215}, we divide the interval $1.215 \leq \cosh \ell_1 < \infty$ into subintervals:
$$[1.215, \infty) = [1.215,1.367] \cup [1.367,1.439] \cup [1.439, \infty)$$
We address each subinterval separately.  The first is below.

\begin{lemma}  \label{1.215vol1.367} A compact hyperbolic $3$-manifold $N$ with geodesic boundary satisfying $1.215 \leq \cosh \ell_1 \leq 1.367$ has volume greater than $6.89$.  \end{lemma}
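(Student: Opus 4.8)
The plan is to show that the right-hand side of the Kojima--Miyamoto inequality (\ref{KMvol}) is greater than $6.89$ for every $\ell_1$ with $1.215 \le \cosh \ell_1 \le 1.367$, by bounding each of its ingredients below as a function of $\cosh \ell_1$ and subdividing the interval into finitely many pieces.

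First I would record the monotonicity of the relevant quantities on this interval. By Lemma \ref{borbounds}, $\cosh R \le (1+\sqrt 3)/\sqrt 2 < 2$, so $2 - \cosh R > 0$; and since $R$ is a decreasing function of $\ell_1$, the factor $2 - \cosh R$ is increasing. The quantity $A$ of (\ref{A}) is plainly increasing in $\ell_1$, and $t \mapsto 2t + \sinh 2t$ is increasing. For the lower bound on $\ell_2$ supplied by Lemma \ref{l2vsl1}: since $(3+\sqrt 3)/4 = 1.183\ldots < 1.215$, Lemma \ref{monotone} applies and $E$ is decreasing throughout the interval; a parallel computation --- using that $\cosh R' = 3 - \cosh R$ increases while $\cosh R$ decreases, so that $\cosh 2R' - 1$ increases --- shows $F$ of (\ref{F}) is decreasing as well. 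Comparing $\min\{E,F\}$ with $\ell_1$ at the right endpoint then shows $\min\{E,F\} > \ell_1$ on the whole interval, so Lemma \ref{l2vsl1} gives $\ell_2 \ge \min\{E,F\}$, whence on any subinterval $[a,b]$ one has $H = \min\{A,\tfrac12\ell_2\} \ge \min\{A|_a,\ \tfrac12 E|_b,\ \tfrac12 F|_b\}$, where a subscript $a$ or $b$ denotes evaluation at the value of $\ell_1$ with $\cosh\ell_1$ equal to that endpoint. Finally, the volume of $\mathit{Muf}_{\ell_1}$ is given explicitly in \cite[Lemma 3.3]{KM}, and from that formula one checks that it is decreasing on $[1.215,1.367]$ --- here $\ell_1$ lies past the value at which the muffin volume is maximal, the sides of length $R$ of the generating pentagon having begun to collapse --- so on a subinterval it is bounded below by its value at the right endpoint.

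With these facts in hand I would fix a partition $1.215 = x_0 < x_1 < \cdots < x_n = 1.367$ and, on each $[x_{k-1},x_k]$, replace each factor on the right of (\ref{KMvol}) by the endpoint value at which it is smallest: $\mathrm{vol}(\mathit{Muf}_{\ell_1})$ by its value at $x_k$, $2-\cosh R$ by its value at $x_{k-1}$, and $2H+\sinh 2H$ by $2H_k + \sinh 2H_k$ where $H_k := \min\{A|_{x_{k-1}},\ \tfrac12 E|_{x_k},\ \tfrac12 F|_{x_k}\}$. By the monotonicities above this is a genuine lower bound for $\mathrm{vol}(N)$ valid on all of $[x_{k-1},x_k]$; the proof is then completed by exhibiting a partition for which all $n$ of these numbers exceed $6.89$, i.e.\ by a short table of values.

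I expect the main obstacle to be purely numerical. Near $\cosh\ell_1 = 1.215$ the true output of (\ref{KMvol}) is about $7$, but the factor $2-\cosh R$ there is only about $0.18$, so the collar term contributes little and the crude per-subinterval estimate rests almost entirely on the muffin volume, which is dropping; consequently the partition must be taken rather fine near the left endpoint, and one must check that the margin above $6.89$ persists all the way to $\cosh\ell_1 = 1.367$, where the bound is already close to its minimum over $[1.215,\infty)$. The subsidiary claims --- that $F$ and that $\mathrm{vol}(\mathit{Muf}_{\ell_1})$ are decreasing on the interval --- are routine but do need to be verified.
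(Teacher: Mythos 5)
Your overall strategy coincides with the paper's: subdivide $[1.215,1.367]$, bound each factor in inequality~(\ref{KMvol}) below on each subinterval by evaluating at the appropriate endpoint using monotonicity, and verify by a table of numerical values that the product exceeds $6.89$ throughout. The treatment of the muffin volume, the factor $2-\cosh R$, the quantity $A$, and $E$ (via Lemma~\ref{monotone}) all match the paper.

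There is, however, a gap in your treatment of $F$. You assert that a ``parallel computation'' shows $F$ is decreasing because $\cosh 2R'-1$ increases, but the numerator $\cosh\ell_1+1$ in formula~(\ref{F}) \emph{also} increases with $\ell_1$, so the ratio under the square root has increasing numerator and increasing denominator, and nothing you have said determines the sign of its derivative. Indeed, the paper explicitly flags this point: ``The monotonicity of $F$ is not apparent from its definition.'' Rather than try to establish monotonicity, the paper sidesteps it by observing that one always gets a valid lower bound for $\cosh F$ by inserting the left-endpoint value of $\cosh\ell_1$ into the (increasing) numerator and the right-endpoint value of $\cosh 2R'$ into the (increasing) denominator. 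This mixed-endpoint device is the one nonobvious trick in the paper's proof, and it is exactly the step your argument elides. (Numerically $F$ does appear to decrease on this interval, so your tabulated bounds would likely come out the same; but as written your justification for using the right-endpoint value of $F$ is not a proof.) A secondary remark: your claim that $\min\{E,F\}>\ell_1$ holds on the whole interval because it holds at the right endpoint also leans on the unestablished monotonicity of $F$; the paper simply checks this subinterval-by-subinterval in the table.
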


\begin{table} \begin{center} \begin{tabular}{lllll} 
  $\cosh \ell_1$ & muffin volume & $\mathrm{area}(\partial N - (U \cup U'))$ & $H$ & volume  \\ \hline
  $[1.215,1.220]$ & $5.304$ & $2.216$ & $.629$ ($E$) & $6.899$ \\
  $[1.220,1.226]$ & $5.236$ & $2.399$ & $.611$ ($E$) & $6.899$ \\
  $[1.226,1.233]$ & $5.159$ & $2.609$ & $.592$ ($E$) & $6.900$ \\
  $[1.233,1.241]$ & $5.076$ & $2.844$ & $.574$ ($E$) & $6.901$ \\
  $[1.241,1.250]$ & $4.988$ & $3.097$ & $.556$ ($E$) & $6.901$ \\
  $[1.250,1.260]$ & $4.895$ & $3.367$ & $.539$ ($F$) & $6.898$ \\
  $[1.260,1.270]$ & $4.808$ & $3.648$ & $.524$ ($F$) & $6.908$ \\
  $[1.270,1.281]$ & $4.717$ & $3.911$ & $.510$ ($F$) & $6.898$ \\
  $[1.281,1.292]$ & $4.632$ & $4.182$ & $.498$ ($F$) & $6.900$ \\
  $[1.292,1.303]$ & $4.551$ & $4.436$ & $.488$ ($F$) & $6.898$ \\
  $[1.303,1.314]$ & $4.475$ & $4.675$ & $.479$ ($F$) & $6.894$ \\
  $[1.314,1.324]$ & $4.409$ & $4.899$ & $.471$ ($F$) & $6.899$ \\
  $[1.324,1.334]$ & $4.346$ & $5.092$ & $.464$ ($F$) & $6.891$ \\
  $[1.334,1.343]$ & $4.292$ & $5.275$ & $.459$ ($F$) & $6.893$ \\
  $[1.343,1.351]$ & $4.245$ & $5.432$ & $.454$ ($F$) & $6.894$ \\
  $[1.351,1.358]$ & $4.206$ & $5.565$ & $.451$ ($F$) & $6.895$ \\
  $[1.358,1.364]$ & $4.173$ & $5.678$ & $.448$ ($F$) & $6.896$ \\
  $[1.364,1.367]$ & $4.157$ & $5.772$ & $.447$ ($F$) & $6.917$
\end{tabular} \end{center}
\caption{} 
\label{table:volEF}
\end{table}

\begin{proof} The strategy of proof is to break the interval in question into subintervals, bound the constituent quantities in the formula (\ref{KMvol}) on each subinterval, and from this obtain a coarse lower bound for the right hand side of the inequality.  Table \ref{table:volEF} records this computation.  We explain its entries below.

The leftmost column specifies the subinterval of values of $\cosh \ell_1$.  The second column records a lower bound on this interval for the volume of the muffin.  This is attained at the right endpoint, according to \cite[Lemma 3.3]{KM}.  The third column records a lower bound for the area on $\partial N$ of the complement of the base of the muffin.  This is attained at the left endpoint of the subinterval, since $R$ is decreasing in $\ell_1$.  The fourth column records a lower bound for $H$.  This is obtained by computing minima for each of $A$, $E/2$, and $F/2$ on the subinterval and taking the minimum (in each case this lower bound is greater than $\ell_1/2$).  

From its definition (\ref{A}) one easily finds that $A$ is increasing in $\ell_1$, hence a minimum for $A$ is obtained at the left endpoint of each subinterval.  By Lemma \ref{monotone} above, $E$ is decreasing in $\ell_1$ on each subinterval in question, so its minimum is obtained at the right endpoint.  The monotonicity of $F$ is not apparent from its definition, so to find a minimum on each subinterval, we plug the value of $\cosh \ell_1$ at the left endpoint and the value of $\cosh 2R'$ at the right endpoint to the formula (\ref{F}).  In addition to the resulting minimum, we record which of $A$, $E$, and $F$ supplies it in the fourth column of Table \ref{table:volEF}.

The final column assembles these bounds to give a volume bound.  In each column after the first, the decimal approximation has been truncated after three places.   \end{proof}

Using the bounds of Lemma \ref{KLM} for $\ell_2$ in the inequality (\ref{KMvol}) yields the lemma below.

\begin{lemma}  \label{1.367vol1.439}  A compact hyperbolic $3$-manifold with geodesic boundary satisfying  $1.367 \leq \cosh \ell_1 \leq 1.439$ has volume at least $6.89$.  \end{lemma}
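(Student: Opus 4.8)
The plan is to run exactly the argument used in the proof of Lemma \ref{1.215vol1.367}, feeding the fundamental inequality (\ref{KMvol}) with the sharper lower bound on $\ell_2$ supplied by Lemma \ref{KLM}, which is available on this whole range because $\cosh\ell_1 \le 1.439$ satisfies (\ref{ell1forR'}). Concretely, I would partition $[1.367,1.439]$ into finitely many closed subintervals of $\cosh\ell_1$, refining near $\cosh\ell_1 \approx 1.4$ where inspection of \cite[Graph 4.1]{KM} shows the right-hand side of (\ref{KMvol}) attains its local minimum. On each subinterval I record a lower bound for each of the three ingredients of (\ref{KMvol}): the muffin volume $\mathrm{vol}(\mathit{Muf}_{\ell_1})$, the area $4\pi(2-\cosh R)$ of $\partial N$ minus the base of the muffin, and the collar height $H = \min\{A,\ell_2/2\}$.

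The monotonicity facts needed to reduce these to endpoint evaluations are all elementary. By \cite[Lemma 3.3]{KM} the muffin volume decreases in $\ell_1$, so its minimum on a subinterval is attained at the right endpoint; $R$ decreases in $\ell_1$ (Lemma \ref{d11vsl1}), so $2-\cosh R$ increases and the area bound is attained at the left endpoint. For $H$, formula (\ref{A}) shows $A$ increases in $\ell_1$, while Lemma \ref{KLM} gives $\ell_2 \ge \max\{\ell_1,\min\{E,F\},\min\{L,M\}\} \ge \min\{L,M\}$; on this range it is $\min\{L,M\}$ that dominates $\min\{E,F\}$ (see the discussion following Lemma \ref{KLM}), and from (\ref{L}) one sees $L$ decreases in $\ell_1$ (as $\tanh\ell_1$ increases and $R''$ is a constant, the denominator there being positive on this range) while from (\ref{M}) $M$ increases in $\ell_1$. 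Hence on each subinterval $H \ge \min\{A,\,L/2,\,M/2\}$, with $A$ and $M/2$ evaluated at the left endpoint and $L/2$ at the right; as in the earlier proof one checks this lower bound exceeds $\ell_1/2$. Since $2H+\sinh 2H$ is increasing in $H$, substituting these worst-case values into (\ref{KMvol}) together with the muffin and area bounds above yields a lower bound for $\mathrm{vol}(N)$ valid on the whole subinterval.

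I would then assemble the results into a table in the format of Table \ref{table:volEF}, listing for each subinterval the muffin-volume bound, the area bound, the bound on $H$ (together with whichever of $A$, $L$, $M$ supplies it), and the resulting volume bound, and verify that every entry exceeds $6.89$. The only real obstacle is quantitative: the bound (\ref{KMvol}) comes closest to $6.89$ near $\cosh\ell_1 = 1.4$, so the subintervals must be taken fine enough there that the coarse per-ingredient estimates still clear the threshold. This is precisely the range for which Lemma \ref{KLM} was designed, and its improvement over the $E,F$ bounds of \cite{KM} is what makes the threshold attainable here (cf.\ Remark \ref{vol6.94}).
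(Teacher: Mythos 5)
Your proposal matches the paper's proof essentially verbatim: it breaks $[1.367,1.439]$ into subintervals and bounds (\ref{KMvol}) on each one, using $L$ and $M$ from Lemma \ref{KLM} in place of $E$ and $F$ to bound $H$, with the same monotonicity observations ($L$ decreasing, $M$ increasing, muffin volume decreasing, $R$ decreasing) to reduce everything to endpoint evaluations, assembled into a table exactly as in Lemma \ref{1.215vol1.367}. This is precisely the paper's proof (cf.\ Table \ref{table:volLM}).
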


\begin{proof}  We assemble a table as in the proof of Lemma \ref{1.215vol1.367}, using $L$ and $M$ to bound $H$ below, instead of $E$ and $F$.  Since $L$ is decreasing in $\ell_1$, its minimum occurs at the right endpoint of each subinterval, whereas the minimum of $M$ occurs at the left endpoint.  The results of the computation are recorded in Table \ref{table:volLM}.  \end{proof}

\begin{table}[ht] \begin{center} \begin{tabular}{lllll}
  $\cosh \ell_1$ & muffin volume & $\mathrm{area}(\partial N - (U \cup U'))$ & $H$ & volume  \\ \hline
  $[1.367,1.377]$ & $4.105$ & $5.818$ & $.447$ ($M$) & $6.892$ \\
  $[1.377,1.392]$ & $4.031$ & $5.966$ & $.448$ ($M$) & $6.894$ \\
  $[1.392,1.416]$ & $3.920$ & $6.176$ & $.449$ ($M$) & $6.893$ \\
  $[1.416,1.439]$ & $3.823$ & $6.485$ & $.451$ ($M$) & $6.959$
\end{tabular} \end{center}
\caption{} 
\label{table:volLM}
\end{table}

We now prove Proposition \ref{l1cosh1.215}.  Below we recall its statement.

\begin{proposition}\label{l1cosh1.215}  \ellonecoshonetwo
\end{proposition}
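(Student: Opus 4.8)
The plan is to reduce Proposition \ref{l1cosh1.215} to the three subintervals
$$[1.215,\infty) = [1.215,1.367] \cup [1.367,1.439] \cup [1.439,\infty),$$
for which Lemmas \ref{1.215vol1.367} and \ref{1.367vol1.439} already dispose of the first two. So the only work that remains is the tail $[1.439,\infty)$, i.e.\ the part of \cite[Graph 4.1]{KM} lying to the right of the local minimum near $\cosh\ell_1 = 1.4$. The guiding observation is that on this tail the Kojima--Miyamoto inequality (\ref{KMvol}) is increasing in $\ell_1$, so it suffices to bound its right-hand side from below at $\cosh\ell_1 = 1.439$ and check the bound stays above $6.89$ as $\ell_1\to\infty$.

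First I would record monotonicity of the three ingredients of (\ref{KMvol}) as functions of $\ell_1$ on $[1.439,\infty)$. The muffin volume $\mathrm{vol}(\mathit{Muf}_{\ell_1})$ is increasing in $\ell_1$ (from \cite[Lemma 3.3]{KM}), so it is bounded below by its value at $\cosh\ell_1 = 1.439$, which is roughly $3.82$. The area factor $4\pi(2-\cosh R) = 4\pi - 4\pi(\cosh R - 1)$ is increasing in $\ell_1$ since $R$ is decreasing in $\ell_1$ (Lemma \ref{d11vsl1}); in the limit $\cosh R \to 1$ this factor tends to $4\pi$, and at $\cosh\ell_1 = 1.439$ it is already at least about $6.49$. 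Finally the collar height $H = \min\{A,\ell_2/2\}$: the quantity $A$ of (\ref{A}) is increasing and unbounded in $\ell_1$, and $\ell_2 \geq \ell_1$ is increasing and unbounded, so $H$ is bounded below by $\min\{A(\ell_1), \ell_1/2\}$ evaluated using Lemma \ref{borbounds} or more precisely at $\cosh\ell_1 = 1.439$; this gives $H \geq 0.45$ or so, hence $2H + \sinh 2H$ is bounded below by a definite positive constant. Multiplying the (positive, increasing) area factor by the (positive, increasing) function $2H + \sinh 2H$ and adding the (increasing) muffin volume, the entire right-hand side of (\ref{KMvol}) is an increasing function of $\ell_1$ on $[1.439,\infty)$, hence bounded below by its value at the left endpoint. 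A direct computation at $\cosh\ell_1 = 1.439$ — the muffin volume, the area, and the minimum of $A$ and $\ell_1/2$ there — gives a volume bound comfortably exceeding $6.89$ (indeed the bound only grows from there, tending to $+\infty$). Together with Lemmas \ref{1.215vol1.367} and \ref{1.367vol1.439}, this covers all of $[1.215,\infty)$ and proves the proposition.

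The main obstacle I anticipate is not conceptual but bookkeeping: one must be careful that the lower bound for $H$ on $[1.439,\infty)$ is genuinely achieved by $\min\{A,\ell_1/2\}$ rather than by some $d_{12}$- or $d_{22}$-type constraint that could in principle make $\ell_2$ only slightly larger than $\ell_1$ — but since we only need $\ell_2 \geq \ell_1$ for the lower bound, and $\ell_1/2$ at $\cosh\ell_1 = 1.439$ already exceeds the threshold needed, this is safe. A second small point: one should confirm that $A$ and $\ell_1/2$ are ordered so that we know which is the minimum near the left endpoint; at $\cosh\ell_1 = 1.439$ one computes $\cosh A = \sqrt{(2/3)(1.439+1)} = \sqrt{1.626...} \approx 1.275$, so $A \approx 0.72 > \ell_1/2 \approx 0.445$, meaning $H = \ell_1/2$ just past the endpoint; as $\ell_1$ grows this persists until $A$ would overtake — but in any case both are increasing, so the infimum over the tail is attained at $\cosh\ell_1 = 1.439$ and equals $\ell_1/2$ there. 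Thus the final computation is: plug $\mathrm{vol}(\mathit{Muf}) \geq 3.82$, area $\geq 6.49$, $H \geq 0.445$ into (\ref{KMvol}), obtaining a bound above $6.89$, and observe the bound is monotone increasing on the remaining tail.
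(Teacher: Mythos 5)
Your overall plan matches the paper exactly — the same three subintervals, with Lemmas \ref{1.215vol1.367} and \ref{1.367vol1.439} disposing of the first two, and a monotonicity claim on the tail $[1.439,\infty)$ using $H=\ell_1/2$ (which is safe, as you correctly check $A>\ell_1/2$). However, there is a genuine gap in your monotonicity argument for the tail.

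You assert that the muffin volume $\mathrm{vol}(\mathit{Muf}_{\ell_1})$ is increasing in $\ell_1$, and then conclude that the right-hand side of (\ref{KMvol}) is a sum of increasing quantities. This is false: the muffin volume is \emph{decreasing} in $\ell_1$. You can read this off the paper's own Tables \ref{table:volEF} and \ref{table:volLM} — the muffin volume column decreases monotonically from about $5.304$ near $\cosh\ell_1=1.215$ down to $3.823$ near $1.439$, and the text explicitly says the tabulated lower bound is attained at the \emph{right} endpoint of each subinterval, which is only correct if the quantity is decreasing. (One can also see it asymptotically: as $\ell_1\to\infty$ the muffin volume tends to $\pi\ln(4/3)\approx 0.9$, far below its value near $\cosh\ell_1=1.439$.) So the "three increasing pieces" argument collapses. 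Nor can you rescue it by bounding the two pieces separately: the infimum of the muffin volume is about $0.9$, and the collar term at $\cosh\ell_1=1.439$ is about $3.3$, giving only $\approx 4.2$, well short of $6.89$. The whole point is that the collar term grows fast enough to overcome the shrinking muffin, and that requires looking at the sum, not the pieces.

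The paper handles this precisely by writing out $V(\ell_1)$ explicitly, evaluating $V\approx 7.1$ at $\cosh\ell_1=1.439$, and then differentiating. The derivative computation shows $V'(\ell_1)$ equals a positive quantity plus $(1+\cosh\ell_1)(2-\cosh R)$ plus $(d/d\ell_1)(\cosh R)\cdot Q$; the crucial observations are that $Q=\cosh^{-1}((4\cosh\ell_1+1)/3)-\ell_1-\sinh\ell_1$ is negative (verified at the left endpoint and shown to be decreasing) and $(d/d\ell_1)(\cosh R)$ is negative, so their product is positive, and that $\cosh R<2$ on the interval. That is the step your argument is missing, and it cannot be replaced by a termwise-monotonicity observation.
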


\begin{proof}  Given Lemmas \ref{1.215vol1.367} and \ref{1.367vol1.439}, we need only concern ourselves with hyperbolic $3$-manifolds $N$ with totally geodesic boundary satisfying $1.439 \leq \cosh \ell_1$.  The union of the muffin and a collar of height $H = \ell_1/2$ has volume given by the following formula.
$$ V(\ell_1) = \pi \left[ \ell_1 + 2\sinh \ell_1 + \sqrt{\frac{2\cosh \ell_1 -1}{2\cosh \ell_1 -2}}\left( \cosh^{-1}\left(\frac{4\cosh \ell_1 +1}{3} \right) - \ell_1 - \sinh \ell_1 \right) \right]  $$
When $\cosh \ell_1 = 1.439$, this yields a volume of $7.1...$  We claim that $V(\ell_1)$ is increasing with $\ell_1$ (as is suggested by Graph 4.1 in \cite{KM}).  Once established, this will complete the proof.

It is clear that $\displaystyle{\sqrt{\frac{2\cosh \ell_1 - 1}{2\cosh \ell_1 - 2}}=\cosh R}$ (recall equation (\ref{R})) is decreasing in $\ell_1$, asymptotically approaching $1$ from above.  The derivative of $\cosh^{-1}((4\cosh \ell_1+1)/3)$ is $1/\cosh R$, bounded above by one.  Hence the quantity in parentheses above,
$$ Q = \cosh^{-1}\left( \frac{4\cosh \ell_1+1}{3} \right) - \ell_1 - \sinh \ell_1,  $$
is decreasing in $\ell_1$.  Its value is negative when $\cosh \ell_1 = 1.439$, and so also on the entire interval in question.  Taking the derivative of $V(\ell_1)$ yields the following.
$$ V'(\ell_1) = \pi \left[ 1 + 2\cosh \ell_1 + \cosh R\left(\frac{1}{\cosh R} - 1 - \cosh \ell_1\right) + \frac{d}{d\ell_1}(\cosh R) \cdot Q \right]  $$
Since $\frac{d}{d\ell_1}(\cosh R)$ and $Q$ are both negative, the above is the sum of a positive number with the product of $1 + \cosh \ell_1$ and $2 - \cosh R$.  When $\cosh \ell_1 = 1.439$, $\cosh R = 1.46... < 2$; since $\cosh R$ is decreasing as a function of $\ell_1$, the derivative of the volume formula is positive for $\cosh \ell_1 \geq 1.439$.
\end{proof}

\begin{lemma}  Let $N$ be a compact hyperbolic $3$-manifold with $\partial N$ connected, totally geodesic, and of genus $2$.  If $\cosh \ell_1 \leq 1.215$, then $\ell_1$ and $A$ are each less than $\ell_2/2$.  \label{l2twicel1} \end{lemma}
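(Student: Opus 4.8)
The plan is to play the lower bound $\ell_2 \ge \min\{E,F\}$ supplied by Lemma \ref{l2vsl1} against explicit upper bounds for $\ell_1$ and for $A$, all regarded as functions of $\ell_1$. By Lemma \ref{borbounds}, the hypothesis $\cosh\ell_1 \le 1.215$ confines us to the range $\frac{3+\sqrt{3}}{4} \le \cosh\ell_1 \le 1.215$; write $I$ for the corresponding interval of $\cosh\ell_1$-values, which is nonempty since $\frac{3+\sqrt{3}}{4} = 1.183\ldots$, and work throughout on $I$.

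First I would reduce to a single inequality. By the definition (\ref{A}) of $A$, the relation $A \ge \ell_1$ is equivalent to $\cosh^2 A = \frac{2}{3}(\cosh\ell_1 + 1) \ge \cosh^2\ell_1$, that is, to $3\cosh^2\ell_1 - 2\cosh\ell_1 - 2 \le 0$, which holds exactly when $\cosh\ell_1 \le \frac{1+\sqrt{7}}{3} = 1.2152\ldots$. Since $1.215$ lies below this threshold, $A \ge \ell_1$ on $I$, so it suffices to prove $\ell_2 > 2A$: this gives $\ell_1 \le A < \ell_2/2$ at once.

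The next step collects the monotonicity we need. From (\ref{A}), $\cosh 2A = 2\cosh^2 A - 1 = \frac{1}{3}(4\cosh\ell_1 + 1)$, which is visibly increasing in $\ell_1$, so $2A$ attains its maximum on $I$ at $\cosh\ell_1 = 1.215$, where a computation gives $2A = \cosh^{-1}(\frac{1}{3}(4 \cdot 1.215 + 1)) < 1.290$. By Lemma \ref{monotone}, which applies since $I \subseteq [\frac{3+\sqrt{3}}{4}, 1.4]$, the quantity $E$ is decreasing in $\ell_1$ on $I$, so $E$ is at least its value at $\cosh\ell_1 = 1.215$; chaining through (\ref{R}), (\ref{R'}) and (\ref{E}) at that point yields $E \ge 1.294\ldots$ throughout $I$. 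For $F$ I would not attempt to prove monotonicity, but instead bound the ingredients of (\ref{F}) separately: on $I$ the numerator satisfies $\cosh\ell_1 + 1 \ge \frac{3+\sqrt{3}}{4} + 1$, while $\cosh R$ is decreasing in $\ell_1$ by (\ref{R}), so $\cosh R' = 3 - \cosh R$ is positive and increasing, hence $\cosh 2R' = 2\cosh^2 R' - 1$ is increasing, and therefore the denominator $\cosh 2R' - 1$ is at most its value at $\cosh\ell_1 = 1.215$. Substituting these worst cases into (\ref{F}) gives $F \ge 1.293\ldots$ on $I$.

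Assembling the three estimates, on $I$ we have $\ell_2 \ge \min\{E,F\} \ge 1.293\ldots > 1.290 > 2A \ge 2\ell_1$, whence $A < \ell_2/2$ and $\ell_1 < \ell_2/2$, as asserted. The main point requiring care is simply that the numerical margins here are slim --- a few thousandths --- which is to be expected, since $\frac{1+\sqrt{7}}{3} = 1.2152\ldots$ is essentially the largest value of $\cosh\ell_1$ for which this argument survives; but each of the estimates above is elementary and routine to verify.
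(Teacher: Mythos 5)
Your proposal is correct and takes essentially the same approach as the paper's own proof: both rest on the lower bound $\ell_2 \ge \min\{E,F\}$ from Lemma \ref{l2vsl1}, bound $E$ below on the interval using the monotonicity of Lemma \ref{monotone}, bound $F$ below by the mixed endpoint substitution (smallest numerator, largest denominator), and compare the resulting $\ell_2 \ge 1.293$ against $\ell_1$ and $A$ evaluated at $\cosh\ell_1 = 1.215$. Your preliminary observation that $A \ge \ell_1$ on the interval (since $\cosh\ell_1 \le 1.215 < \tfrac{1+\sqrt{7}}{3}$), which lets you reduce to the single inequality $\ell_2 > 2A$, is a modest streamlining of the paper's version, which simply checks $\ell_1$ and $A$ against $\ell_2/2$ separately.
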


\begin{proof} 
Lemma \ref{l2vsl1} implies that $\ell_2 \geq \min \{E,F\}$.  The
values of $\ell_1$ in question here have hyperbolic cosine between
$\frac{3+\sqrt{3}}{4} \cong 1.186$ and $1.215$.  By Lemma
\ref{monotone}, $E$ is monotone decreasing in $\ell_1$ on this
interval; thus a lower bound, obtained by plugging in $1.215$, is
$1.961\ldots$.  The quantity $F$ is not necessarily monotone in $\ell_1$,
since both $\cosh \ell_1$ and $\cosh 2R'$ are increasing in $\ell_1$.
However, a coarse lower bound may be found by substituting
$\cosh^{-1}(1.215)$ for $\ell_1$ in $\cosh \ell_1$ and
$\cosh^{-1}(\frac{3+\sqrt{3}}{4})$ for $\ell_1$ in $\cosh 2R'$.  The
lower bound obtained in this way is $1.960...$.

When $\cosh \ell_1 \leq 1.215$, $\ell_1 \leq .645$.  Taking the
inverse hyperbolic cosine of $1.960$, we find $\ell_2 \geq 1.293$.
The quantity $A$ is clearly increasing in $\ell_1$.  When $\cosh
\ell_1 = 1.215$, the value obtained is $.644....$ This establishes the
lemma.
\end{proof}

\begin{proposition}\label{no(1,1,1)} \nooneoneone  \end{proposition}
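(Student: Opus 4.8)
The plan is to prove the statement by contradiction, sharpening the $d_{11}$ estimate that appears in the proof of Lemma \ref{d11vsl1} by using the hypothesis, and then contradicting the Bor\"oczky upper bound of Lemma \ref{borbounds}. So suppose that $\widetilde{N}$ contains no $(1,1,1)$ hexagon but that $\cosh \ell_1 < 1.215$. By Lemma \ref{borbounds} we then have $(3+\sqrt{3})/4 \leq \cosh \ell_1 < 1.215$, and also $\cosh d_{11} \leq 3 + 2\sqrt{3}$.

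First I would extract the relevant hexagon. Choose short cuts $\lambda \neq \lambda'$ of length $\ell_1$ whose feet lie on a common component $\Pi_1$ of $\partial \widetilde{N}$ and realize the distance $d_{11}$ (the minimum in the definition of $d_{11}$ is attained since only finitely many return paths have length $\leq \ell_1$). Let $\Pi_2$ and $\Pi_3$ be the components of $\partial \widetilde{N}$ containing the remaining feet of $\lambda$ and $\lambda'$. Since short cuts join distinct components we have $\Pi_1 \neq \Pi_2$ and $\Pi_1 \neq \Pi_3$; and $\Pi_2 \neq \Pi_3$, since otherwise $\lambda$ and $\lambda'$ would both be the common perpendicular of the disjoint planes $\Pi_1$ and $\Pi_2$, which is unique, forcing $\lambda = \lambda'$. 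Hence $\Pi_1, \Pi_2, \Pi_3$ are distinct, and Lemmas \ref{geodhex} and \ref{rtanghex} produce a right-angled hexagon $C$ with internal edges $\lambda$, $\lambda'$, and the common perpendicular $\lambda''$ of $\Pi_2$ and $\Pi_3$. If $\lambda''$ projects to $\lambda_k$, then $C$ is a $(1,1,k)$ hexagon; since $\widetilde{N}$ has no $(1,1,1)$ hexagon, $k \neq 1$, so $k \geq 2$ and therefore $\ell_k \geq \ell_2$.

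Next I would run the trigonometry. Applying the right-angled hexagon rule (\ref{hexrule}) to $C$ exactly as in the proof of Lemma \ref{d11vsl1} gives
\[
  \cosh d_{11} = \frac{\cosh^2 \ell_1 + \cosh \ell_k}{\sinh^2 \ell_1} \;\geq\; \frac{\cosh^2 \ell_1 + \cosh \ell_2}{\sinh^2 \ell_1}.
\]
On the interval $(3+\sqrt{3})/4 \leq \cosh \ell_1 \leq 1.215$, the argument in the proof of Lemma \ref{l2twicel1} (using Lemmas \ref{l2vsl1} and \ref{monotone}) shows that $\cosh \ell_2 \geq 1.96$. Writing $t = \cosh^2 \ell_1$ and using $\sinh^2 \ell_1 = t - 1$, we obtain
\[
  \cosh d_{11} \;\geq\; \frac{t + 1.96}{t - 1} \;=\; 1 + \frac{2.96}{t-1},
\]
which is decreasing in $t$; since $t < 1.215^2 < 1.477$, this forces $\cosh d_{11} > 7.2$. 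But $\cosh d_{11} \leq 3 + 2\sqrt{3} < 6.47$, a contradiction. Hence $\cosh \ell_1 \geq 1.215$.

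I expect the only delicate point to be the uniform lower bound $\cosh \ell_2 \geq 1.96$ over the whole interval $[(3+\sqrt{3})/4,\,1.215]$, since the quantity $F$ of Lemma \ref{l2vsl1} is not manifestly monotone in $\ell_1$; but this is exactly the bound already established in the proof of Lemma \ref{l2twicel1} (monotonicity of $E$ from Lemma \ref{monotone} plus a coarse substitution for $F$), so it can simply be cited. Everything else is a single application of the hexagon rule played off against the two Bor\"oczky bounds of Lemma \ref{borbounds}, and is routine.
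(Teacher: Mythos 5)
Your proof is correct and follows essentially the same route as the paper's: both identify the $(1,1,k)$ hexagon realizing $d_{11}$, use the absence of a $(1,1,1)$ hexagon to force $\ell_k\ge\ell_2$, plug a lower bound on $\ell_2$ from Lemma \ref{l2twicel1} into the hexagon rule, and contradict the Bor\"oczky bound $\cosh d_{11}\le 3+2\sqrt3$ of Lemma \ref{borbounds}. The only (harmless) difference is that you feed in the numerical bound $\cosh\ell_2\ge 1.96$ directly, whereas the paper uses the consequence $\ell_2\ge 2\ell_1$ to get the cleaner closed form $\cosh d_{11}\ge 3+2/\sinh^2\ell_1$.
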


\begin{proof}  Suppose $N$ is a hyperbolic $3$-manifold with totally geodesic boundary and no $(1,1,1)$ hexagons.  According to Lemma \ref{l2twicel1}, if $\ell_1$ is at most $1.215$, then $\ell_2$ is at least twice $\ell_1$.  Since $N$ has no $(1,1,1)$ hexagons, in the right--angled hexagon in $\widetilde{N}$ realizing $d_{11}$, the opposite side has length at least $\ell_2$; thus at least twice $\ell_1$.  Using the right--angled hexagon rule as in inequality (\ref{KMd11}), we obtain the following inequality.  \begin{align*}
  \cosh d_{11} & \geq \frac{\cosh^2 \ell_1 + \cosh (2\ell_1)}{\sinh^{2} \ell_1}  = \frac{3\cosh^2 \ell_1 - 1}{\cosh^2 \ell_1 -1} = 3 + \frac{2}{\sinh^2 \ell_1} \end{align*}
We recall from Lemma \ref{borbounds} that $\cosh d_{11} \leq 3 + 2\sqrt{3}$.  Putting this together with the inequality above implies
$$ \sinh^2 \ell_1 \geq \frac{1}{\sqrt{3}}.  $$
This gives $\cosh \ell_1 > 1.255$, contradicting the hypothesis that $\cosh \ell_1 \leq 1.215$.
\end{proof}


\section{Normal books of $I$-bundles}  \label{sec:boibs}

\Number\label{ridin'}

The results of this section and the next are topological. We work in
the PL category in these sections, and follow the conventions of
\cite{CDS}. In particular, we say that a subset $Y$ of a space $X$ is
{\it $\pi_1$-injective} if for all path components $A$ of $X$ and $B$
of $Y$ such that $A\subset B$, the inclusion homomorphism
$\pi_1(A)\to\pi_1(B)$ is injective.  We denote the Euler characteristic of a finite
polyhedron $X$ by $\chi(X)$, and we write $\chibar(X)=-\chi(X)$.

\Proposition\label{easy come}Let $X$ be a compact, orientable
$3$-manifold, and let $S$ and $T$ be components of $\partial X$.
Suppose that for some field $F$ the inclusion homomorphism $H_1(T;F)\to H_1(M;F)$
is surjective. Then $S$ is $\pi_1$-injective in $M$.
\EndProposition

\Proof 
Assume that $S$ is not $\pi_1$-injective. Then there is a
properly embedded disk $D\subset X$ such that $\partial D$ is a
non-trivial simple closed curve in $S$. Let $N$ be a regular
neighborhood of $D$ in $X$, let $A$ denote the component of
$\overline{X-N}$ containing $T$, let $\star$ be a base point in $T$,
and let $G\le\pi_1(X,\star)$ denote the image of $\pi_1(T,\star)$
under the inclusion homomorphism. Then $\pi_1(X,\star)$ is the free
product of $G$ with another subgroup $K$, where $K\cong\ZZ$ if $D$
does not separate $X$, and $K\cong\pi_1(\overline{X-A})$ if $D$ does
separate $X$. In the latter case, $\partial D$ is a separating,
non-trivial simple closed curve in $S$, and hence $\overline{X-A}$ has
a boundary component of strictly positive genus. Thus in either case
we have $H_1(K;F)\ne0$. Hence the inclusion homomorphism
$H_1(A;F)\to H_1(X;F)$ is not surjective. Since $S\subset A$,
this contradicts the hypothesis.
\EndProof

\Lemma\label{how many people cried}
Let 
$\tau:F\to F$ be a
 free involution of a compact orientable surface $F$, and let
 $C\subset F$ be a simple closed curve. Suppose that $\tau(C)$ is isotopic
to a curve which is disjoint from $C$. Then $C$ is isotopic to a curve
$C_1$ such that either (i) $\tau(C_1)\cap C_1=\emptyset$ or (ii)
$\tau(C_1)= C_1$. Furthermore, if $\tau$
reverses orientation we may always choose $C_1$ so that (i) holds.
\EndLemma

\Proof  We fix a metric with convex boundary on
  $F/\langle\tau\rangle$.  Then $F$ inherits a metric such
that $\tau:F \to F$ is an isometry.  Since $C$ is a homotopically
non-trivial simple closed curve in $F$, it is homotopic to a 
curve $C_1$ with shortest length in its homotopy class, 
which is a simple
closed geodesic by \cite[Theorem 2.1]{lilfhs}.  Let
$C'$ be homotopic to $\tau(C)$ and disjoint from $C$.  Then since the
shortest closed geodesics $C_1$ and $\tau(C_1)$ are respectively
homotopic to the disjoint curves $C$ and $C'$, it follows from
  \cite[Corollary 3.4]{lilfhs} that they either are disjoint or
  coincide. (The results we have quoted from \cite{lilfhs} are stated
  there for the case of a closed surface, but it is pointed out in the
  first paragraph of \cite[\S4]{lilfhs} that they hold in the case of
  a compact surface with convex boundary.)

It follows from \cite[Theorem 2.1]{epstein} that
$C$ is isotopic to $C_1$.  This proves
the first assertion.

To prove the second assertion, suppose that $\tau$ reverses
orientation and that $C$ is
isotopic to a curve
$C_1$ such that (ii) holds. Let $A$ be an invariant annular
neighborhood of $C_1$. Since $\tau$ is a free involution it must
preserve an orientation of the invariant curve $C$; since it reverses
an orientation of $F$, it must therefore interchange the components of
$\partial A$. Hence (i) holds if $C_1$ is replaced by one of the
components of $\partial A$.
\EndProof

If a $3$-manifold $X$ has the structure of an $I$-bundle over a surface $T$ 
and $p: X \rightarrow T$ is the bundle projection, we will call 
$\partial_v X \doteq p^{-1}(\partial T)$ the \textit{vertical} boundary
of $X$ and $\partial_h X \doteq \overline{\partial X - \partial_v X}$ 
the \textit{horizontal} boundary of $X$.  Note that $\partial_v X$ inherits
the structure of an $I$-bundle over $\partial T$, and $\partial_h X$
 the structure of a $\partial I$-bundle over $T$, from the original $I$-bundle 
structure on $X$.  We call
an annulus $A \subset X$ \textit{vertical} if $A = p^{-1}(p(A))$.

Let $M$
 an orientable, irreducible
$3$-manifold $M$, and let $F$ be a $\pi_1$-injective $2$-dimensional
submanifold of $\partial M $.
By an {\it essential annulus or torus in $M$ relative to $F$}
we mean a properly embedded annulus or torus in $M$
which is $\pi_1$-injective, has its boundary contained in $F$, and not
parallel to a subsurface of $F$. 

In the case where $M$ is boundary-irreducible, we shall
define an {\it essential annulus or torus in $M$} to be
an  essential annulus or torus in $M$ relative to $\partial M$.

\EndNumber

\Proposition\label{one bundle at a time}
Let  $X$ be an $I$-bundle over a compact surface, and suppose that $A$ 
is an essential annulus in $|\calw|$ relative to $\partial_hX$. Then $A$ is 
isotopic, by an ambient isotopy of $X$ which is constant on
$\partial_v X$,  to a
vertical annulus in $X$.
\EndProposition

\Proof 
Let $j:A\to X$ denote the inclusion. If $j$ is homotopic to a map of
$A$ into $\partial_hX$ then by
\cite[Lemma 5.3]{Waldhausen}, $A'$ is boundary parallel, contradicting
essentiality relative to $\partial_hX$. Hence $j$ is not homotopic to a
map of $A$ into $\partial_hX$.

It follows that if  $X$ is a trivial $I$-bundle then $A$ has one boundary
component on each component of $\partial_h X$. Lemma 3.4 of
\cite{Waldhausen} then asserts that $A$ is ambiently isotopic in $X$ to a
vertical annulus, by an isotopy fixing $\partial_v X$ and one
component of $\partial_h X$. This gives the conclusion in the case
where $X$ is a trivial $I$-bundle.

We turn to the case where $X$ is a twisted $I$-bundle. Then for some
compact orientable surface $F$ and some orientation-reversing free
involution $\tau:F\to F$ we may write $X=(F\times I)/\hat\tau$, where
$\hat\tau:F\times I\to F\times I$ is defined by
$\hat\tau(x,t)=(\tau(x),1-t)$. The quotient map $p:F\times I\to X$ is
a two-sheeted covering map, and $p$ maps $F\times\{i\}$
homeomorphically onto $\partial_h X$ for $i=0,1$. Hence $j:A\to X$ may
be lifted to an embedding $\tj:A\to\tX$. Since $j$ is not homotopic to
a map of $A$ into $\partial_hX$, the annulus $\tj(A)$ is essential. By
the case of the proposition already proved, $\tj$ is ambiently
isotopic in $\tX$ to an embedding $\tj'$ such that $\tj'(A)$ is
vertical.

For $i=0,1$ let  $C_i$  denote the component of $\partial
A$ with $\tj'(C_i)\subset F\times\{i\}$. Define a simple closed curve
$C\subset F$ by $C\times\{0\}=\tj'(C_0)$. Then $C\times\{0\}$ is isotopic
to $\tj(C_0)$. Since $\tj'(A)$ is
vertical, 
$\tj'(C_1) = C \times \{1\}$; hence by definition we have
$\tau(C)\times\{0\}=
\hat\tau(\tj'(C_1))$, and so $\tau(C) \times \{0\}$ is isotopic
to $\hat\tau(\tj(C_1))$. But since $j$ is an embedding, $\tj(C_0)$ and
$\hat\tau(\tj(C_1)$ are disjoint curves in $F\times\{0\}$.  Thus $C$
and $\tau(C)$ are isotopic in $F$ to disjoint curves, and
it follows from Lemma \ref{how many people cried} that $C$ is isotopic
in $F$ to a curve $C_1$ such that $\tau(C_1)\cap
C_1=\emptyset$. This implies that $\tj'$ is ambiently isotopic to an
embedding $\tj'_1$ such that $\tj'_1(A)$ is a vertical annulus and
$\tau(\tj'_1(A))\cap \tj'_1(A)=\emptyset$.

Hence $p\circ\tj'_1:A\to X$ is an embedding, and $A_1\doteq 
p\circ\tj'_1(A)$ is a vertical annulus in $X$. Since 
$\tj'_1$ is ambiently isotopic in $F \times I$ to 
$\tj$, the homeomorphism $p\circ\tj'_1:A\to A_1$ is homotopic to $j$ by
a boundary-preserving homotopy in $X$. It then follows from
\cite[Corollary 5.5]{Waldhausen} that $A$ is isotopic to $A_1$
by an ambient isotopy fixing $\partial_v X$.
\EndProof

\Number\label{oshkosh bighash}
Let $W$ be a compact, orientable, irreducible and boundary-irreducible
$3$-manifold with $\partial W\ne\emptyset$. We recall the definition
of the characteristic submanifold $\Sigma$ of $W$ relative to
$\partial W$. Up to ambient isotopy, $\Sigma$ is the unique compact
submanifold of $W$ with the following properties.
\begin{enumerate}
\item\label{thermometer} Every component of $\Sigma$ is either an $I$-bundle $P$ over a
surface such that $P\cap\partial W=\partial_hP$, or a Seifert
fibered space $S$ such that $S\cap\partial W$ is a saturated
$2$-manifold in $\partial S$.
\item\label{sushi} Every component of the frontier of $\Sigma$ is an
essential annulus or torus in $W$.
\item\label{cheesequake} No component of $\Sigma$ is ambiently isotopic in $W$ to a
submanifold of another component of $\Sigma$.
\item\label{big ol' hash} If $\Sigma_1$ is a compact
submanifold of $W$ such that (\ref{thermometer}) and (\ref{sushi}) hold with $\Sigma_1$
in place of $\Sigma$, then $\Sigma_1$ is ambiently isotopic in $W$ to
a submanifold of $\Sigma$.
\end{enumerate}
For further details, see \cite{Jo} and \cite{JaS}.

It follows from Property (\ref{sushi}) of $W$ that $\Sigma$ is
$\pi_1$-injective. Hence if $W$ is simple, the fundamental group of a
component of $\Sigma$ cannot have a rank-$2$ free abelian subgroup. It
follows that if $W$ is simple then every Seifert-fibered component of
$\Sigma$ is a solid torus. In particular, all the components of the
frontier of $\Sigma$ are essential annuli in this case.

\EndNumber

The rest of this section is concerned with books of $I$-bundles. We
shall follow the conventions of \cite{CDS}, and we refer the reader to
\cite[\rhumba]{CDS} for the definition of a book of $I$-bundles. As in
\cite{CDS} we shall denote the union of the pages of a book of
$I$-bundles by $\calp_\calw$ and the union of its bindings by
$\calb_\calw$, and we shall set $|\calw|=\calp_\calw\cup\calb_\calw$
and $\cala_\calw=\calp_\calw\cap\calb_\calw$.

\Definition Let $\calw$ be a book of $I$-bundles, and set
$W=|\calw|$, and let $C$ denote a regular neighborhood of
$\cala_\calw$ in $W$.  We shall say that $\calw$ is {\it normal} if
(i) $W$ is a simple $3$-manifold, and (ii) $\overline{W-C}$ is
ambiently isotopic in $W$ to the characteristic submanifold of the
pair $(W,\partial W)$.
\EndDefinition

\Proposition\label{all god's chillun got guts} Let $W$ be a simple
$3$-manifold.  Let $\Sigma$ denote the characteristic submnanifold of
the pair $(W,\partial W)$, and suppose that
$\chi(\overline{W-\Sigma})\ge0$. Then $\overline{W-\Sigma}$ is a
regular neighborhood of a properly embedded submanifold $\cala$ of
$W$, each component of which is an annulus. Furthermore, there is a
normal book of $I$-bundles $\calw$ such that $|\calw|=W$ and
$\cala_\calw=\cala$.
\EndProposition

\Proof
Set $\calc=\overline{W-\Sigma}$.
As we observed in \ref{oshkosh bighash}, the components of the
frontier of $\Sigma$ are $\pi_1$-injective annuli. In particular, no
component of $\partial\calc$ is a $2$-sphere. Since
$\chi(\calc)\ge0$, it follows that every component of $\partial\calc$
is a torus. On the other hand, the property (\ref{sushi}) of
$\Sigma$ stated in  \ref{oshkosh bighash} implies that $\calc$ is
$\pi_1$-injective in $W$. Since $W$ is simple, it follows that the
fundamental group of a component of $\calc$ cannot have a rank-$2$
free abelian subgroup. Hence the components of $\calc$ are
boundary-reducible, and in view of the irreducibility of $W$ they must
be solid tori. 

Let $C$ be any component of $\calc$. The frontier components of $C$
are among the frontier components of $\Sigma$, and we observed in
\ref{oshkosh bighash} that these are essential annuli in $W$. In particular
it follows that the components of $C\cap\partial W$ are
$\pi_1$-injective annuli on the solid torus $C$. Hence $C$ may be
given the structure of a Seifert fibered space in such a way that
$C\cap\partial W$ is saturated. It now follows from Property (\ref{big
ol' hash}) of $\Sigma$ that $C$ is ambiently isotopic in $W$ to a
submanifold of $\Sigma$. Since $C$ is also ambiently isotopic to a
submanifold of $W-\Sigma$, it must be ambiently isotopic to the
regular neighborhood of a frontier component of $\Sigma$.
This proves:
\Claim\label{spitzer}
$\calc$ is a regular neighborhood of a properly embedded
$2$-manifold  $\cala\subset W$ whose components are annuli. 
In particular, each component of
$\calc$ has two frontier annuli. 
\EndClaim

In particular this gives the first assertion of the proposition.

Let $\Sigma_0$ denote the union of all components of $\Sigma$ that are
solid tori, and set $\Sigma_-=\Sigma-\Sigma_0$. Since every component
of $\Sigma$ is a solid torus or an $I$-bundle $P$ with $P\cap\partial
W=\partial_hP$, each component of $\Sigma_-$ is an $I$-bundle $P$ over a
surface of negative Euler characteristic with $P\cap\partial W=\partial_hP$.

We now claim:
\Claim\label{morally triangulated}
Each component of
$\calc$ has one frontier annulus contained in $\Sigma_0$ and one
contained in $\Sigma_-$.
\EndClaim

Let $C$ be any component  of $\calc$. According to \ref{spitzer}, $C$
has two frontier annuli $A_1$ and $A_2$. Let $Q_i$ denote the
component of $\Sigma$ containing $A_i$ (where a priori we might have
$Q_1=Q_2$). To prove \ref{morally triangulated} we must show that
$Q_1$ and $Q_2$ cannot both be contained in
$\Sigma_-$ or both be contained in $\Sigma_0$.

First suppose that the $Q_i$ are both contained in $\Sigma_-$. Then
$Q\doteq Q_1\cup C\cup Q_2$ may be given the structure of an $I$-bundle over a
surface in such a way that $\partial_hQ=Q\cap\partial W$. It therefore
follows from the property (\ref{big ol' hash}) of $\Sigma$ stated in
\ref{oshkosh bighash} that $Q$ is ambiently isotopic to a submanifold
of $\Sigma$. But since
$\chi(Q_i)<0$ for $i=1,2$ we have $\chibar(Q)>\chibar(Q_i)$ for
$i=1,2$. Hence $Q$ cannot be ambiently isotopic to a submanifold
of $Q_i$ for $i=1,2$. Furthermore, if $Q'$ is a component of $\Sigma$
distinct from $Q_1$ and $Q_2$, then since $Q\cap Q'=\emptyset$ and
$\chi(Q)<0$, the $I$-bundle $Q$ cannot be ambiently isotopic to a submanifold
of $Q'$. This is a contradiction.

Now suppose that the $Q_i$ are both contained in $\Sigma_0$. Then the
$Q_i$ are solid tori, and the image of the inclusion homomorphism
$\pi_1(A_i)\to\pi_1(Q_i)$ has some finite index $m_i$ in $\pi_1(Q_i)$
for $i=1,2$. The fundamental group of
$Q\doteq Q_1\cup C\cup Q_2$ has presentation $\langle x_1,x_2:
x_1^{m_1}=x_2^{m_2}\rangle$. But since $W$ is simple and the frontier
annuli of $Q$ are essential, $\pi_1(Q)$ has no rank-$2$ free abelian
subgroup. Hence at least one of the $m_i$ must be equal to $1$, and we
may assume that $m_2=1$. But this implies that $Q_2$ is ambiently
isotopic to a submanifold of $Q_1$, which contradicts the property
(\ref{cheesequake}) of $\Sigma$ stated in \ref{oshkosh bighash}. This
completes the proof of \ref{morally triangulated}.

It follows from \ref{spitzer} and \ref{morally triangulated} that
$\calb\doteq\Sigma_0\cup\calc$ is a regular neighborhood of $\Sigma_0$
in $W$, and that the frontier $\cala'$ of $\calb$ is ambiently
isotopic to $\cala$. If we set $\calp=\Sigma_-$, it follows from the
definition given in \cite[\rhumba]{CDS} that $\calw'=(W,\calb,\calp)$
is a book of $I$-bundles. Normality is immediate from the
construction. Since $\cala_{\calw'}=\cala'$ is ambiently isotopic to
$\cala$, there is a normal book of $I$-bundles $\calw$ with
$|\calw|=|\calw'|=W$ and $\cala_\calw=\cala$.
\EndProof

Requiring that a book of $I$-bundles structure   
be normal rules out certain degeneracies.
For example, if $W^\flat$ is an $I$-bundle over a closed surface of negative
Euler characteristic, we may write $W^\flat=|\calw^\flat|$ for some book of
$I$-bundles $\calw^\flat$ with $\calb_{\calw^\flat}\ne\emptyset$. Such a book of
$I$-bundles $\calw^\flat$ is not normal, and the following proposition would
become false if the normal book of $I$-bundles $\calw$ were replaced
by $\calw^\flat$.

\Proposition\label{i once was as meek} Let $\calw$ be a normal 
book of $I$-bundles, and suppose that $A$ is an essential annulus in
$|\calw|$. Then $A$ is ambiently isotopic in $|\calw|$ to either a
vertical annulus in a page of $\calw$, or an annulus contained in a
binding.
\EndProposition

\Proof
Set $W=|\calw|$. Let $\calc$ be a regular neighborhood of $\cala_\calw$
in $W$. The definition of normality implies that (up to isotopy)
$\Sigma\doteq\overline{W-\calc}$ is the characteristic submanifold of
$W$ relative to $\partial W$. 

Let $V$ be a regular neighborhood of $A$ in $W$. Then
$V$ may be given the structure of a Seifert
fibered space  in such a way that $V\cap\partial W$ is a saturated
$2$-manifold in $\partial V$. The components of the  frontier of $V$
in $W$ are essential annuli in $W$. Hence the property (\ref{big ol'
hash}) of $\Sigma$ stated in \ref{oshkosh bighash} implies that $V$ is
ambiently isotopic in $W$ to a submanifold of $\Sigma$. In particular, $A$ is
ambiently isotopic in $W$ to an annulus $A'\subset
W-\cala_\calw$. Thus $A'$ is contained in either a page or a binding
of $W$.  

If $A'$ is contained in a page $P$, then $P$ is an $I$-bundle
  over a surface.  Since $A$ is essential in $W$ (relative to
  $\partial W$), it is in particular  essential in $P$ relative
  to $\partial_hP$. It therefore follows from Proposition \ref{one
    bundle at a time} that $A'$ is isotopic in $P$,
  by an ambient isotopy of $P$ which fixes $\partial_v P$, 
  to a vertical
  annulus. The conclusion of the proposition follows.
\EndProof

\Lemma\label{as a new-born lamb} Let $\calw$ be a normal
book of $I$-bundles. Let $Y$ be a compact $3$-dimensional submanifold
 of $ |\calw|$. Suppose that the following conditions hold.
\begin{enumerate}
\item\label{i'm now employed}Each
component of the frontier of $Y$ in $|\calw|$ is an essential properly
embedded annulus in
$|\calw|$.
\item\label{the devil may take him}
The $2$-manifold $Y\cap \partial |\calw|$ has two components $Z_0$ and
$Z_1$, with
$\chibar(Z_0)=\chibar(Z_1)=1$.
\item\label{i'll never forsake him}The inclusion homomorphism
$H_1(Z_0;\ZZ_2)\to H_1(Y;\ZZ_2)$ is injective. 
\item\label{the full treatment}For every solid torus
$L\subset Y$, such that $ L\cap Z_0$ is an annulus which is
homotopically non-trivial in $|\calw|$, the
inclusion homomorphism $H_1( L\cap Z_0;\ZZ)\to H_1(L,\ZZ)$ is 
surjective. 
\end{enumerate}
Then  the inclusion homomorphism
$\pi_1(Z_0)\to\pi_1(Y)$ is surjective. 
\EndLemma

\Proof 
We set $W=|\calw|$, $\calp=\calp_\calw$ and $\calb=\calb_\calw$.

Since the frontier components of $Y$
relative to $W$ are annuli, we have 
$$\chibar(Y)=\frac12\chibar(\partial Y)=\frac12(\chibar(Z_0)+\chibar(Z_1))=1.$$

By Proposition \ref{i once was as meek} we may assume that each
frontier component of $Y$ is either a vertical annulus contained in
$\calp $ and disjoint from $\cala $, or an annulus in
$\calb $. Then each component of $Y\cap\calp $ is an
$I$-sub-bundle of a page of $\calw$, whose frontier is a disjoint
union of vertical annuli in the page; each of these vertical annuli is
either contained in, or disjoint from, the vertical boundary of the
page. Furthermore, each component of $Y\cap\calb $ is a solid
torus in a binding, whose frontier is a disjoint union of
$\pi_1$-injective annuli in the binding. In particular we have
$\chibar(C)\ge0$ for every component $C$ of $Y\cap\calp $, and
$\chibar(C)=0$ for every component $C$ of $Y\cap\calb $.

Since the frontier components of $Y\cap\calb $ and $Y\cap\calp $
relative to $Y$ are annuli, we have
$$1=\chibar(Y)=\chibar(Y\cap\calb )+\chibar(Y\cap\calp
)=\chibar(Y\cap\calp )=\sum_U\chibar(U),$$
where $U$ ranges over the components of $Y\cap\calp  $.
Hence there is a component $U_1$ of $Y\cap\calp $ with
$\chibar(U_1)=1$, and $\chibar(U)=0$ for every component $U\ne U_1$ of
$Y\cap\calp $.

Since $U_1$ is a sub-bundle of a page of $\calw$, it is an $I$-bundle
over a surface $S$ with $\chibar(S)=1$, and the frontier of $U_1$
relative to $W$ is its vertical boundary. 

We set $\calk=\overline{Y-U_1}$.

Each component of the frontier of $U_1$ or $\calk$ relative to $W$ is
a component of either the frontier $\cala_\calw$ of $\calp_\calw$
relative to $W$, or the frontier of $Y$ relative to $W$.  According to
\cite[Lemma 5.2]{CDS},
the components of
$\cala_\calw$ are $\pi_1$-injective annuli in $W$. The components of
the frontier of $Y$ are $\pi_1$-injective by hypothesis. Hence:
\Claim\label{becomes perforce}
Each component of the frontier of $U_1$ or $\calk$ relative to $W$ is a
$\pi_1$-injective annulus in
$W$.
\EndClaim

The horizontal boundary of $U_1$ is a two-sheeted covering space of
$S$, which is connected if and only if $U_1$ is a twisted
$I$-bundle. In particular we have
$\chibar(\partial_hU_1)=2\chibar(S)=2$. On the other hand, we have
$\partial_hU_1= U_1\cap\partial W\subset Y\cap\partial W=Z_0\cup
Z_1$. Hence $\partial_hU_1\subset Z_j$ for some $j\in\{0,1\}$. It
follows from
\ref{becomes perforce}  that $\partial_hU_1$ is $\pi_1$-injective in $W$
and hence in $Z_0\cup Z_1$. Since $\chibar(Z_0)=\chibar(Z_1)=1$, the
surface $\partial_hU_1$ cannot be  contained in $Z_0$ or in $Z_1$. Hence:
\Claim\label{you'll never regret it}
$U_1$ is a trivial $I$-bundle over $S$, and its horizontal boundary
has one component contained in $Z_0$ and one in $Z_1$.
\EndClaim

For $i=1,2$, let us denote by $\Delta_i$ the component of
$\partial_hU_1$ contained in $Z_i$.

Since $W$ is simple by the definition of a normal book of
  $I$-bundles, it follows from
\ref{becomes perforce} that each component of $\calk$ is simple.
If $V$  is any  component  of $\calk$, the components of $V\cap\calp$
 are components of $Y\cap\calp$ distinct from $U_1$, and the components of $V\cap\calb$
 are components of $Y\cap\calb$. Hence all 
components of $V\cap\calp$ and $V\cap\calb$ have Euler characteristic
$0$. As the components of $(V\cap\calp)\cap(V\cap\calb)$ are annuli it
follows that $\chi(V)=0$. But the only simple $3$-manifold with
non-empty boundary having Euler characteristic $0$ is a solid torus.
This shows:
\Claim\label{i readily bet it}Every component of $\calk$ is a solid torus.
\EndClaim

Let us consider any component $A$ of $U_1\cap\calk$, and let $V$
denote the component of $\calk$ containing $A$. According to \ref{i
  readily bet it}, $V$ is a solid torus. Since $A$ is a component of
the frontier of $U_1$ in $W$, it is a $\pi_1$-injective annulus in $W$
by \ref{becomes perforce}, and hence in $V$. On the other hand, it
follows from \ref{you'll never regret it} that some component $c$ of
$\partial A$ is contained in $Z_0$. A small non-ambient
  isotopy of $V$ gives a solid torus $L$ such that 

$ L\cap Z_0$ is a regular neighborhood $R$ of $c$. Since $A$ is
$\pi_1$-injective in $W$, the annulus $R$ is
homotopically non-trivial in $W|$. Hypothesis  (\ref{the
    full treatment}) then implies that the 
inclusion homomorphism $H_1( L\cap Z_0;\ZZ)\to H_1(L,\ZZ)$ is an
isomorphism, and hence that the inclusion
homomorphism $\pi_1(A)\to\pi_1(V)$ is an isomorphism.
This shows:

\Claim\label{for duty, duty must be done}For every component $A$ of 
$U_1\cap\calk$, if
$V$ denotes the component of $\calk$ containing $A$, the inclusion
homomorphism $\pi_1(A)\to\pi_1(V)$ is an isomorphism.
\EndClaim

Now suppose that $A_1$ and $A_2$ are two components of $U_1\cap\calk$
contained in a single component $V$ of $\calk$. For $i=1,2$ it follows
from \ref{you'll never regret it} that some component $c_i$ of
$\partial A_i$ is contained in $Z_0$. Since $c_1$ and $c_2$ are
disjoint, non-trivial simple closed curves on the torus $\partial
V\subset W$, they represent the same element of $H_1(W;\ZZ_2)$.  Since
by hypothesis (\ref{i'll never forsake him}) the inclusion
homomorphism $H_1(Z_0;\ZZ_2)\to H_1(Y;\ZZ_2)$ is injective, $c_1$ and
$c_2$ cobound a subsurface $Q$ of $Z_0$. Since $c_1$ and $c_2$ are
homotopically non-trivial, $Q$ is $\pi_1$-injective in $Z_0$, and
hence $\chibar(Q)\le\chibar(Z_0)=1$. As $Q$ is orientable and has
exactly two boundary curves, it must be an annulus. On the other hand,
$Q$ and $\Delta_0$ are subsurfaces of $Z_0$, and we have $\partial
Q\subset\partial \Delta_0$. Hence either $\Delta_0\subset A$ or $A$ is
a component of $\overline{Z_0-\Delta_0}$. But it follows from
\ref{becomes perforce}  that $\Delta_0$ is $\pi_1$-injective in $W$
and hence in $Z_0$; since $\chibar(\Delta)=1$, we cannot have
$\Delta_0\subset A$. Thus we have proved:
\Claim\label{the rule applies}If
$A^{(0)}$ and $A^{(1)}$ are two components of $U_1\cap\calk$
contained in the same component of $\calk$, then some component  of
$\overline{Z_0-\Delta_0}$  is an annulus $Q$ having one boundary component
contained in $\partial A^{(0)}$ and one contained in $\partial A^{(1)}$.
\EndClaim

In particular, if $Q$ is an annulus having the properties stated in
 \ref{the rule applies}, then $\partial Q\subset
 \partial\Delta_0$. Since $\chibar(\Delta_0)=1$, the surface
 $\Delta_0$ has at most three boundary curves. Hence there is at most
 one unordered pair $\{A^{(0)},A^{(1)}\}$ of components of $U_1\cap\calk$ such
 that $A^{(0)}$ and $A^{(1)}$ are
contained in the same component of $\calk$. Equivalently:
\Claim\label{to everyone}
There is no component of $\calk$ whose boundary contains more than two
 components of $U_1\cap\calk$, and there is at most one component of
 $\calk$ whose boundary contains two components of $U_1\cap\calk$.
\EndClaim

In view of \ref{to everyone}, the argument now divides into two cases.

\noindent{\bf Case I: The boundary of each component of $\calk$ 
contains only one component of $U_1\cap\calk$.} In this case, let
$A_1,\ldots,A_m$ denote the components of $U_1\cap\calk$. (Since
$\chibar(S)=1$ we have $m\le3$.) Let $V_i$ denote the component of
$\calk$ containing $A_i$; thus the solid tori $V_1,\ldots,V_m$ are all
distinct. We have $Y=U_1\cup V_1\cup\cdots\cup V_m$. By 
\ref{for duty, duty must be done} the inclusion homomorphism
$\pi_1(A_i)\to\pi_1(V_i)$ is an isomorphism for $i=1,\ldots,m$. Hence 
the inclusion homomorphism 
$\pi_1(U_1)\to\pi_1(Y)$ is an isomorphism. But by \ref{you'll never regret
it}, the inclusion homomorphism 
$\pi_1(\Delta_0)\to\pi_1(U_1)$ is an isomorphism. Hence the inclusion
homomorphism $\pi_1(\Delta_0)\to\pi_1(Y)$ is an isomorphism, and in 
particular the inclusion
homomorphism $\pi_1(Z_0)\to\pi_1(Y)$ is surjective. This establishes
the conclusion of the lemma in this case.

\noindent{\bf Case II: There is a component $V_0$ of $\calk$ whose boundary
contains more than one component of $U_1\cap\calk$.} By \ref{to
everyone}, this component $V_0$ is unique and contains exactly two
components of $U_1\cap\calk$, which we denote $A_0^{(0)}$ and $A_0^{(1)}$. 

According to \ref{i readily bet it}, $V_0$ is a solid torus, and
according to \ref{for duty, duty must be done} the inclusion homomorphism
$\pi_1(A_0^{(i)})\to\pi_1(V_0)$ is an isomorphism for $i=0,1$. Hence
there is a homeomorphism $h:V_0\to S^1\times[0,1]\times[0,1]$ such
that $h(A_0^{(i)})=S^1\times\{i\}\times[0,1]$ for $i=0,1$. This allows
us to extend the $I$-bundle structure of $U_1$ to an $I$-bundle
structure for $L\doteq U_1\cup V_0$. (Note, however, that the
horizontal boundary of $L$ need not be contained in $\partial W$.) 

According to \ref{the rule applies},
some component  of
$\overline{Z_0-\Delta_0}$  is an annulus $Q$ having one boundary component
contained in $\partial A^{(0)}$ and one contained in $\partial
A^{(1)}$. This implies that the $I$-bundle $L$ is trivial, and that
one of its horizontal boundary components, which we shall denote by
$\Theta$, is contained in $Z_0$.

Now let $A_1,\ldots,A_m$ denote the components of $U_1\cap\calk$
distinct from $A_0^{(0)}$ and $A_0^{(1)}$. (Since $\chibar(S)=1$ one
may show that $m\le1$, but this will not be used.) Let $V_i$ denote
the component of $\calk$ containing $A_i$; thus the solid tori
$V_1,\ldots,V_m$ are all distinct. We have $Y=L\cup
V_1\cup\cdots\cup V_m$. By
\ref{for duty, duty must be done} the inclusion homomorphism
$\pi_1(A_i)\to\pi_1(V_i)$ is an isomorphism for $i=1,\ldots,m$. Hence 
the inclusion homomorphism 
$\pi_1(L)\to\pi_1(Y)$ is an isomorphism. But since $L$ is a
trivial $I$-bundle and $\Theta$ is one component of its horizontal boundary, the inclusion homomorphism 
$\pi_1(\Theta)\to\pi_1(L)$ is an isomorphism. Hence the inclusion
homomorphism $\pi_1(\Theta)\to\pi_1(Y)$ is an isomorphism, and in 
particular the inclusion
homomorphism $\pi_1(Z_0)\to\pi_1(Y)$ is surjective. Thus 
the conclusion of the lemma is established in this case as well.
\EndProof


\section{Trimonic  manifolds}  \label{sec:trimonic}

\Number\label{troglodyte}
Let $V$ be a point of an oriented surface $S$. By an {\it ordered
  triod} based at $V$ we shall mean an ordered triple $(A_0,A_1,A_2)$
of closed topological arcs in $S$, each having $V$ as an endpoint,
such that $A_i\cap A_j=\{V\}$ whenever $i\ne j$. 

Suppose that $(A_0,A_1,A_2)$ is an ordered triod based at $V$. For
$i=0,1,2$ let $x_i$ denote the endpoint of $A_i$ that is distinct from
$V$. Then there is a disk $\delta \subset S$ such that $A_0\cup
A_1\cup A_2\subset \delta $ and $A_0\cup A_1\cup A_2\cap\partial
\delta =\{x_0,x_1,x_2\}$. We shall express this by saying that the
triod $(A_0,A_1,A_2)$ is {\it properly embedded} in $\delta$. The
orientation of $S$ restricts to an orientation of $\delta $, which in
turn induces an orientation of $\partial \delta $. We shall say that
the ordered triod $(A_0,A_1,A_2)$ is {\it positive} if the ordered
triple $(x_0,x_1,x_2)$ is in counterclockwise order on $\partial
\delta $, and {\it negative} otherwise.
\EndNumber

\Number Suppose that $\theta$ is an oriented open arc. We
denote by $\theta'$ the same arc with the opposite orientation. By a
{\it terminal segment} of $\theta$ we mean a subset $A$ of $\theta$ which
has the form $h((t,1))$ for some orientation-preserving homeomorphism
$h:(0,1)\to \theta$ and some point $t\in(0,1)$.  By an {\it initial
segment} of $ \theta$ we mean a terminal segment of $\myprime \theta$.
\EndNumber

\Number\label{terminator} Now suppose that $\Gamma$ is a graph (i.e. a $1$-dimensional
CW complex) contained in an oriented surface $S$. By an {\it oriented
edge} of $\Gamma$ we mean simply an (open) edge which is equipped with
an orientation. Let $V$ be a vertex of $\Gamma$, and let
$(e_0,e_1,e_2)$ be an ordered triple of oriented edges of $\Gamma$
with terminal vertex $V$. Assume that the $e_i$ are distinct as
oriented edges (although two of them may be opposite orientations of
the same underlying edge). We may choose terminal segments $A_i$ of
the $e_i$ in such a way that $(\bar{A}_0,\bar{A}_1,\bar{A}_2)$ is an ordered triod in
$S$. We shall say that the ordered triple $(e_0,e_1,e_2)$ is {\it
positive} if the ordered triod $(\bar{A}_0,\bar{A}_1,\bar{A}_2)$ is positive, and {\it
negative} otherwise.
\EndNumber

\Number
Let $Z$ be a planar surface with three boundary curves, and let
$\star\in\inter Z$ be a base point. An ordered pair 
$(z_1,z_2)$ of elements of $\pi_1(Z,\star)$ will be called a {\it geometric
basis} for $\pi_1(Z,\star)$ if the boundary curves of $Z$ may be
indexed as $(C_i)_{1\le i\le3}$ in such a way that for $i=1,2,3$ there exist a point
$p_i\in C_i$, a closed path $\gamma_i$ in $C_i$ based at $p_i$ and an
oriented (embedded) arc $\tau_i$ from $\star$ to $p_i$, such that
\begin{itemize}
\item the  $\tau_i$ have pairwise disjoint interiors;
\item $\tau_i\cap C_i=\{p_i\}$ for each $i$;
\item $[\gamma_i]$ generates $\pi_1(C_i,p_i)$ for each $i$; 
\item $z_i=[\tau_i*\gamma_i*\overline{\tau_i}]$ for $i=1,2$; and
\item $z_1^{-1}z_2= [\tau_3*\gamma_3*\overline{\tau_3}]$.
\end{itemize}

Note that if $(z_1,z_2)$ is a geometric
basis for $\pi_1(Z,\star)$ then $z_1$ and $z_2$ freely generate
$\pi_1(Z,\star)$.
\EndNumber

\Number\label{but go} 
Let $\Gamma$ be a theta graph contained in an oriented surface
$S$. Let $W$ and $V$ denote the vertices of $\Gamma$, and let
$\beta_0$, $\beta_1$ and $\beta_2$ denote the oriented edges having
initial vertex $W$ and terminal vertex $V$. Suppose that the ordered
triple $(\beta_0,\beta_1,\beta_2)$ of edges terminating at $V$ is
positive, and that the ordered triple $(\myprime \beta_0,\myprime
\beta_1,\myprime \beta_2)$ of oriented edges terminating at $W$ is
negative. Then a regular neighborhood $Z$ of $\Gamma$ in $S$ is a
planar surface with three boundary curves, and $([\myprime
\beta_0* \beta_1],[\myprime \beta_0* \beta_2])$ is a geometric
basis of $\pi_1(Z,V)$.
\EndNumber

\Number\label{go in peace}
Let $\Gamma$ be an eyeglass graph contained in an oriented surface
$S$. Let $W$ and $V$ denote the vertices of $\Gamma$, let $\beta_0$
denote the oriented edge having initial vertex $W$ and terminal
vertex $V$, and let $\beta_1$ and $\beta_2$ be oriented loops based at
$W$ and $V$ respectively.  Suppose that the ordered triple
$(\beta_0,\beta_2',\beta_2)$ of edges terminating at $V$ is positive,
and that the ordered triple $(\beta_0',\beta_1',\beta_1)$ of edges
terminating at $W$ is negative. Then a regular neighborhood $Z$ of
$\Gamma$ in $S$ is a planar surface with three boundary curves, and
$([ \beta_2],[\myprime \beta_0*
\beta_1*\beta_0])$ is a geometric basis of $\pi_1(Z,V)$.
\EndNumber

\Definition\label{i'm now sir murgatroyd}
Let $X$ be a compact orientable $3$-manifold, and let $S$ be a
component  of $\partial X$. We shall say that $X$ is a {\it trimonic 
 manifold relative to $S$} if there exists a properly
embedded arc $\alpha\subset X$ and a PL
map $f$ of a PL $2$-disk $D$ into $X$, such that the following conditions hold:
\begin{enumerate}
\item\label{of steven sondheim} $f^{-1}(\alpha)$ is a union of three disjoint arcs in $\partial
  D$;
\item\label{and leonard bernstein} $f$ maps each component of $f^{-1}(\alpha)$ homeomorphically
  onto $\alpha$;
\item\label{everything's free} $f|(\inter D\cup((\partial D)-f^{-1}(\alpha)))$ is one-to-one;
\item\label{in america} $f(\inter D)\subset\inter X$; 
\item\label{for a small fee} $f((\partial D)-f^{-1}(\alpha))\subset S$;
\item\label{maybe i go back} $X$ is a semi-regular neighborhood of $S\cup f(D)$.
\end{enumerate}

Note that condition (\ref{for a small fee}) implies that the endpoints
of $\alpha$ lie in $S$.

A PL map $f$ of a $2$-disk $D$ into $X$ such that (\ref{of steven
sondheim})--(\ref{maybe i go back}) hold for some properly embedded
arc $\alpha$ in $X$ will be called a {\it
defining hexagon} for the trimonic  manifold $X$ relative to
$S$.

\EndDefinition

\Notation
Suppose that $f:D\to X$ is a defining hexagon for a trimonic 
manifold $X$ relative to $S$. Note that the arc $\alpha$ appearing in
Definition \ref{i'm now sir murgatroyd} is uniquely determined by
$D$. We shall denote this arc by $\alpha_f$. Furthermore, we shall
denote by $\Gamma_f$ the PL set $f({(\partial
D)-f^{-1}(\alpha)})=f(D)\cap\partial X\subset S$.
\EndNotation

\Lemma\label{the magic} Suppose that $X$ is a trimonic
 manifold relative to $S$. Let $f:D\to X$ be a defining
hexagon for $X$, and set $\alpha=\alpha_f$ and $\Gamma=\Gamma_f$. Then
$\Gamma$ is homeomorphic to either a theta graph 
(the ``theta case") or an eyeglass graph
(the ``eyeglass case"),
and a regular neighborhood $Z$ of $\Gamma$ in $S$ is a planar surface
with three boundary curves. Furthermore, for some (and hence for any)
base point $\star\in\inter Z$, there is an ordered  basis
$(t,u_1,u_2)$ of the rank-$3$ free group $\pi_1(Z\cup\alpha,\star)$
with the following properties:
\begin{itemize}
\item the  inclusion homomorphism
$\pi_1(Z,\star)\to\pi_1(Z\cup\alpha,\star)$ maps some geometric basis
of $\pi_1(Z,\star)$ to the pair $(u_1,u_2)$;
 and
\item $\partial D$ may be oriented so that the conjugacy class in
$\pi_1(Z\cup\alpha,\star)$ represented by $f|\partial D:\partial D\to
Z\cup\alpha$ is $t^2u_1tu_2$ in the theta case, 
or $t^2u_1t^{-1}u_2$ in the eyeglass case.
\end{itemize}
\EndLemma

\begin{figure}
\begin{center}
\input{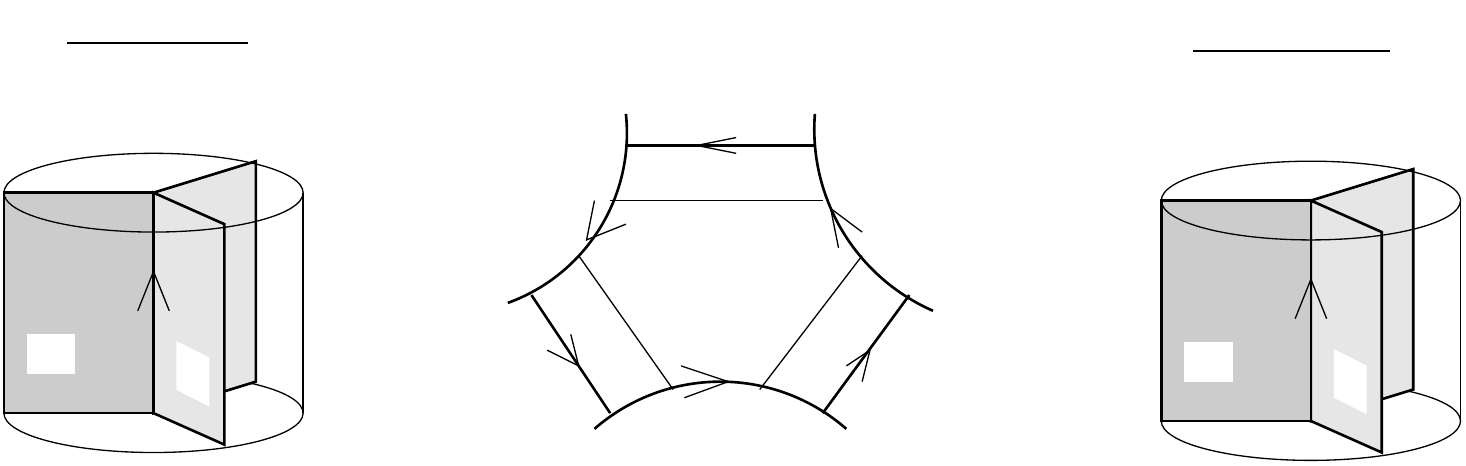_t}
\end{center}
\caption{Some objects defined in the proof of Lemma \ref{the magic}}
\label{hexnT}
\end{figure}

\Proof
We denote the components of
$f^{-1}(\alpha)$ by $a_0$, $a_1$ and $a_2$, and we denote the
components of $\overline{(\partial D)-f^{-1}(\alpha)}$ by $b_0$, $b_1$
and $b_2$. We take the $a_i$ and $b_i$ to be labeled in such a way
that $a_i$ and $b_j$ share an endpoint if and only if $i$ is congruent
to either $j$ or $j+1$ modulo $3$. For $i=0,1,2$ we set
$\beta_i=f(\inter b_i)$. Then $\beta_i$ is an open arc in
$S-\partial\alpha $, and
$\overline{\beta_i}-\beta_i\subset\partial\alpha $. Hence
$\Gamma=\overline{\beta_0\cup\beta_1\cup\beta_2}$ may be given the
structure of a graph whose vertices are the
endpoints of $\alpha$, and whose edges are $\beta_0$, $\beta_1$ and
$\beta_2$.

Since $\alpha$ is an embedded arc, $f$ maps the terminal point of
$\beta_i$ and the initial point of $\beta_{i+1}$ to distinct vertices
of $\Gamma$. In particular, each vertex of $\Gamma$ has 
valence $3$.  Since $\Gamma$
has three edges and two vertices, it is either a theta graph
or an eyeglass graph.

For $i=0,1,2$ we denote by $P_i$ the common endpoint of the arcs $a_i$
and $b_{i-1}$ in $\partial D$ (where subtraction is interpreted modulo
$3$), and by $Q_i$ the common endpoint of $a_i$ and $b_{i}$.

If we fix an orientation of $\partial D$ then each of the arcs $a_i$
and $b_i$ inherits an orientation. We choose the orientation of
$\partial D$ in such a way that $P_i$ and $Q_i$ are respectively the
initial and terminal points of $a_i$, while $Q_i$ and $P_{i+1}$ are
respectively the initial and terminal points of $b_i$.
According to Definition \ref{i'm now sir murgatroyd}, $f$ restricts to
a homeomorphism $\phi_i:a_i\to\alpha$ for $i=0,1,2$.

Let $T$ be a PL tubular neighborhood of $\alpha$ in $X$. We may choose
$T$ in such a way that $f$ is transverse to the frontier of $T$ and
$f^{-1}(T)$ is a regular neighborhood of $f^{-1}(\alpha)=a_0\cup
a_1\cup a_2$ in $D$. For each $i\in\{0,1,2\}$, let $\nu_i$ denote the component of $f^{-1}(T)$
containing $a_i$. Then $\nu_i\cap\partial D$ has the form
$a_i\cup s_i\cup r_i$, where $ s_i$ is the closure of a terminal segment
of $b_{i-1}$ and $ r_i$ is the closure of an initial segment of $b_i$.

It follows from Definition \ref{i'm now sir murgatroyd} that $f$ maps
  each $\nu_i$ homeomorphically on to a PL disk $J_i\subset T$, and
  that $J_i\cap J_j=\alpha$ when $i\ne j$. The intersection of $T$
  with $\partial X$ consists of two PL disks; for each
  $i\in\{0,1,2\}$, one of these disks meets $J_i$ in the arc
  $\sigma_i\doteq f( s_i)$, and  the other meets $J_i$ in the arc
  $\rho_i\doteq f( r_i)$. Hence we may identify $T$ by a PL homemorphism
  with $\delta\times\alpha$, where $\delta$ is a PL disk, in such a
  way that $\alpha=\{o\}\times\alpha$ for some interior point $o$ of
  $\delta$; and so that for $i=0,1,2$ we have $J_i=t_i\times\alpha$
  for some arc $t_i\subset\delta$. Each $t_i$ has one endpoint in
  $\partial\delta$ and one at $o$, and we have $t_i\cap t_j=o$ for
  $i\ne j$. The components of $t_i\times\partial\alpha$ are $\sigma_i$ and
  $\rho_i$.

We may orient $\alpha$ in such a way that at
least two of the homeomorphisms $\phi_i:a_i\to\alpha$ are
orientation-preserving. Hence, after a possible cyclic relabeling of
the $a_i$ (and the $b_i$), we may assume that $\phi_0$ and $\phi_1$ are
orientation-preserving. 

We denote by $V$ and $W$, respectively, the initial and
terminal endpoints of $\alpha$ with respect to the orientation that we
have chosen.  Thus for $i=0,1$ we have $f(P_i)=V$ and $f(Q_i)=W$.

We shall distinguish two cases according to whether the homeomorphism
$\phi_2:a_2\to\alpha$ (I) preserves or (II) reverses orientation.  In
case I we have $f(P_2)=V$ and $f(Q_2)=W$, while in case II we have
$f(P_2)=W$ and $f(Q_2)=V$. Hence we may define ordered triods (see
\ref{troglodyte}) $\calt_V$ and $\calt_W$ based at $V$ and $W$
respectively by setting $\calt_V=(\sigma_0,\sigma_1,\sigma_2)$ and
$\calt_W=(\rho_0,\rho_1,\rho_2)$ in case I, and
$\calt_V=(\sigma_0,\sigma_1,\rho_2)$ and
$\calt_W=(\rho_0,\rho_1,\sigma_2)$ in case II.  In each case, if we
denote by $\delta_V$ and $\delta_W$ the components of $T\cap\partial
X$ containing $V$ and $W$ respectively, the triods $\calt_V$ and
$\calt_W$ are properly embedded (see \ref{troglodyte}) in $\delta_V$
and $\delta_W$.

If we identify $T$ as above with $\delta\times\alpha$, we have
$\delta_V=\delta\times\{V\}$ and $\delta_W=\delta\times\{W\}$. We may
then define a homeomorphism $\psi:\delta_V\to\delta_W$ by
$\psi(x,V)=(x,W)$. 

We orient $S$ in such a way that $\calt_V$ is a positive
ordered triod (see \ref{troglodyte}). Each of the disks $\delta_V$
and $\delta_W$
inherits an orientation from $S$. The orientability of the
$3$-manifold $X$ implies that $\psi:\delta_V\to\delta_W$ is an
orientation-reversing homeomorphism.

By the properties of the identification stated above, we have
$\psi(\sigma_i)=\rho_i$ for $i=0,1$; and in Case I we have
$\psi(\sigma_2)=\rho_2$, while in Case II we have
$\psi(\rho_2)=\sigma_2$. Since $\calt_V$ is a positive ordered triod,
and since $\psi$ reverses orientation, the ordered triod $\calt_W$ is
negative.

We orient each open arc $\beta_i$ in such a way that $f|\inter
b_i:\inter b_i\to\beta_i$ is orientation-preserving.  Since $\alpha$,
$\beta_0$, $\beta_1$ and $\beta_2$ are now equipped with orientations,
their closures define elements  of the fundamental groupoid 
$\Pi(f(D))$ which we denote by $[\alpha]$, $[\beta_0]$,
$[\beta_1]$ and $[\beta_2]$.  We let $c$ denote the closed path
$a_0*b_0*a_1*b_1*a_2*b_2$ in $\partial D$, based at $P_0$.  Then $[c]$
generates $\pi_1( \partial D,P_0)$. We set $\gamma= f\circ c$.

We
have
$$[\gamma] =[\alpha][\beta_0][\alpha][\beta_1][\alpha]^\epsilon[\beta_2]
\in\pi_1(f(D),V)\subset\Pi(f(D)),$$
 where $\epsilon=1$ in Case I, and $\epsilon=-1$ in Case II.

To prove the conclusions of the lemma in Case I, we note that since
$f(P_i)=V$ and $f(Q_i)=W$ for each $i$, each $\beta_i$ has $W$ as
initial vertex and $V$ as terminal vertex. Thus $\Gamma$ is a theta
graph, and Case I is the ``theta case'' referred to in the 
statement of the Lemma. 

Since $\calt_V=(\sigma_0,\sigma_1,\sigma_2)$ is a positive
ordered triod based at $V$, and the interior of $\sigma_i$ 
is a terminal segment of
$\beta_{i-1}$, the triple $(\beta_2,\beta_0,\beta_1)$ of edges
terminating at $V$ is positive in the sense of \ref{terminator}. Hence
the triple $(\beta_0,\beta_1,\beta_2)$ is positive. Likewise, since
$\calt_W=(\rho_0,\rho_1,\rho_2)$ is a negative ordered triod based at
$W$, and since the interior of $\rho_i$ is an 
initial segment of $\beta_i$---and
therefore a terminal segment of $\beta_i'$---the triple
$(\beta_0',\beta_1',\beta_2')$ of edges terminating at $W$ is
negative.  It now follows from \ref{but go} that a regular
neighborhood $Z$ of $\Gamma$ in $S$ is a planar surface with three
boundary curves, and that if we set $z_1=[\myprime
\beta_0* \beta_1]$ and $z_2=[\myprime \beta_0* \beta_2])$, then $(z_1,z_2)$
is a geometric basis of $\pi_1(Z,V)$. 

In particular $\pi_1(Z,V)$ is freely generated by $z_1$ and $z_2$, and
hence $\pi_1(Z\cup\alpha,V)$ is generated by $t$, $u_1$ and $u_2$,
where $u_i$ denotes the image of $z_i$ under the inclusion
homomorphism $\pi_1(Z,V)\to\pi_1(Z\cup\alpha,V)$, and
$t=[\alpha*\beta_0]$.

We have
$$\begin{aligned}
\empty[ \gamma ]
&=[\alpha*\beta_0*\alpha*\beta_1*\alpha*\beta_2]\\
&=[\alpha*\beta_0*\alpha*\beta_0*\myprime\beta_0*\beta_1*\alpha*\beta_0*\myprime\beta_0*\beta_2]\\
&=t^2u_1tu_2.
\end{aligned}
$$

This establishes the conclusions of the Lemma in the theta case.

In Case II we have $f(P_i)=V$ and $f(Q_i)=W$ for $i=0,1$, while
$f(P_2)=W$ and $f(Q_2)=V$. It follows that the closures of
$\beta_1$ and $\beta_2$ are
loops based at $W$ and $V$ respectively, whereas $\beta_0$ has initial
point $W$ and terminal point $V$. Thus $\Gamma$ is an
eyeglass graph, and Case II is the ``eyeglass case'' referred 
to above.

Since $\calt_V=(\sigma_0,\sigma_1,\rho_2)$ is a positive ordered triod
based at $V$, and since the interiors of
$\sigma_0$, $\sigma_1$ and $\rho_2$ are
respectively terminal segments of $\beta_{2}$, $\beta_0$ and
$\beta_2'$, the triple $(\beta_2,\beta_0,\beta_2')$ of edges
terminating at $V$ is positive.  Hence the triple
$(\beta_0,\beta_2',\beta_2)$ is positive.  Likewise, since
$\calt_W=(\rho_0,\rho_1,\sigma_2)$ is a negative ordered triod based
at $W$, and since the interiors of
$\rho_0$, $\rho_1$ and $\sigma_2$ are respectively
terminal segments of $\beta_{0}'$, $\beta_1'$ and $\beta_1$, the
triple $(\beta_0',\beta_1',\beta_1)$ of edges terminating at $W$ is
negative.  It now follows from \ref{go in peace} that a regular
neighborhood $Z$ of $\Gamma$ in $S$ is a planar surface with three
boundary curves, and that  if we set $z_1=[\myprime \beta_0*
\beta_1*\beta_0])$ and 
$z_2=[\beta_2]$, then $(z_1,z_2)$ is a geometric basis of
$\pi_1(Z,V)$.

In particular $\pi_1(Z,V)$ is freely generated by $z_1$ and $z_2$, and
hence $\pi_1(Z\cup\alpha,V)$ is generated by $t$, $u_1$ and $u_2$,
where $u_i$ denotes the image of $z_i$ under the inclusion
homomorphism $\pi_1(Z,V)\to\pi_1(Z\cup\alpha,V)$, and
$t=[\alpha*\beta_0]$.

We have
$$\begin{aligned}
\empty [\gamma ]
&=[\alpha*\beta_0*\alpha*\beta_1*\myprime\alpha*\beta_2]\\
&=[\alpha*\beta_0*\alpha*\beta_0*\myprime\beta_0*\beta_1*\beta_0*\myprime\beta_0*\myprime\alpha*\beta_2]\\
&=(t)^2u_1(t)^{-1}u_2.
\end{aligned}
$$
This establishes the conclusions of the lemma in the eyeglass case. 
\EndProof

\Definition \label{non-degenerate} Let $X$ be a  trimonic 
  manifold relative to $S$. We shall say that $X$ is {\it non-degenerate} if there is a defining hexagon $f:D\to X$
 such that no component of   $S-\Gamma_f$ is an open disk.  
\EndDefinition

\Lemma\label{as steward} Suppose that $X$ is a non-degenerate 
trimonic  manifold 
relative to a component $S$ of  $\partial X$.
Then 
$Hg(X)\le1+\genus (S)$. Furthermore, 
$X$ contains a
compact connected $3$-dimensional submanifold  $Y$ with the following
properties. 
\begin{enumerate}
\item\label{for greater precision} Each
component of the frontier of $Y$ in $X$ is an essential annulus in $X$, 
joining distinct components of $\partial X$.
\item\label{without the elision} The $2$-manifold $Y\cap \partial X$
has two components $Z_0$ and $Z_1$, where $Z_0\subset S$ and
$Z_1\not\subset S$, and each $Z_i$ is a planar surface with
three boundary curves.
\item\label{and i who was} The group $\pi_1(Y)$ is free of rank $2$.
\item\label{his valet de sham} 
For any base point $\star \in\inter Z_0$, there exists an
ordered pair of
generators $(x,y)$ of $\pi_1(Y,\star )$, such that either the pair
$(x,y x^{-1}y^2)$ or the pair $(x,y^{-1} x^{-1}y^2)$ is the image of
some geometric basis of $\pi_1(Z_0,\star )$ under the inclusion
homomorphism $\pi_1(Z_0,\star )\to\pi_1(Y,\star )$.
\item\label{oh baby}Each component $Q$ of $\overline{X-Y}$ may be
given the structure of a trivial $I$-bundle over a $2$-manifold, in
such a way that $Y\cap Q$ is the vertical boundary of $Q$.
\end{enumerate}
\EndLemma

\Proof
We fix a defining hexagon $f:D\to X$, and set $\alpha=\alpha_f$ and
$\Gamma=\Gamma_f$. Since $X$ is non-degenerate, we may choose $f$ in
such a way that:
\Claim\label{when an innocent heart}
No component of $S-\Gamma$ is an open disk.
\EndClaim

Let $T$ be a PL tubular neighborhood of $\alpha$ in $X$, and let
$\delta$ be a properly embedded disk in $T\cap\inter X$ which crosses
$\alpha$ transversally in one point. We may choose $T$ in such a way
that $f^{-1}(T)$ is a regular neighborhood of $f^{-1}(\alpha)=a_0\cup
a_1\cup a_2$ in $D$. Let $D'$ denote the disk
$\overline{D-f^{-1}(T)}$. Then the frontier of $D'$ is the union of
three arcs $a_0'$, $a_1'$ and $a_2'$, where $a_i'$ is the frontier in
$D$ of the component of $f^{-1}(T)$ containing $a_i$.

According to Definition \ref{i'm now sir murgatroyd}, $f$ maps each
$a_i$ homeomorphically onto $\alpha$. Hence we may choose $\delta$ so
that for each $i$, the arc $f(a_i')$
meets $\partial\delta$ transversally in 
exactly one point. In particular:
\Claim\label{your father takes bubble baths}
The simple closed curve $f(\partial D')$ meets $\partial\delta$
 in exactly three points, and these are transversal points of
 intersection within the frontier annulus of $T$.
\EndClaim

Let $\eta$ denote a regular neighborhood in $\overline{X-T}$ of the
properly embedded disk $f(D')\subset\overline{X-T}$. Then $K\doteq T\cup\eta$ is
a regular neighborhood of $f(D)$ in $X$.

Let $X'$ denote the manifold obtained from $X$ by attaching a collar
$\calc$ along the boundary component $S$. We identify
$\calc$ with $\Sigma\times I$, where $\Sigma$ is a surface
homeomorphic to $S$, in such a way that
$S=\Sigma\times\{1\}$ and $S'\doteq\Sigma
\times\{0\}\subset\partial X'$. We set $X''=\calc\cup
K\subset X'$. Since $K$ is a regular
neighborhood of $f(D)$ in $X$, the manifold $X'$ is a semi-regular
neighborhood of its $3$-dimensional submanifold $X''$. It follows that
the pair $(X'',S')$ is homeomorphic to
$(X',S')$, and hence to $(X,S)$. It therefore suffices to
show that the conclusions of the lemma are true when $X$ and $S$ are
replaced by $X''$ and $S'$.

We first note that the manifold $V \doteq(\Sigma\times I)\cup T$ is a
compression body with $\partial_- V=S'$. If we set
$g=\genus (S)$, the genus of $\partial_+ V$ is $g+1$. In particular
$V$ admits a Heegaard splitting of genus $g+1$. We have
$X''=V\cup\eta$, so that $X''$ is obtained from $V$ by adding
a $2$-handle. It therefore follows from \cite[\easyHeegaard]{CDS} that  $\Hg(X'')\le g+1$. This gives the first
assertion of the lemma.

We must now construct a compact connected $3$-dimensional submanifold
$Y$ of $X''$ such that Properties (\ref{for greater
precision})--(\ref{oh baby}) hold for $Y$ when 
 $X$ and $S$
are replaced by $X''$ and
$S'$. 

We take $Z$ to be a regular neighborhood of $K\cap S$ in $S$,
and observe that $Z$ is a
regular neighborhood of $f(D)\cap S=\Gamma$ in $S$. It therefore
follows from Lemma \ref{the magic}  that $Z$ is a planar surface
with three boundary curves.

We have
$Z=R\times\{1\}$ for some $R\subset\Sigma$. We set $Y=(R\times
I)\cup K\subset(\Sigma \times I)\cup K=X''$.

By construction we have $\overline{X-Y}=\overline{\Sigma-R}\times I$
and $\overline{X-Y}\cap(S')=\overline{\Sigma-R}\times
\{0\}$. This implies Property (\ref{oh baby}) for $Y$.

To show that $Y$ has Property (\ref{without the elision}), we first
observe that $Y=(R\times I)\cup K=(R\times I)\cup T\cup\eta$, and that
$T$ and $\eta$ are disjoint from $S'=\Sigma\times\{0\}$ (since their
intersection with $\Sigma\times I$ is contained in
$\Sigma\times\{1\}$). Hence $Y\cap\partial X''$ is the disjoint union
of $Z_0\doteq R\times\{0\}$ and $Z_1\doteq ((R\times \{1\})\cup
T\cup\eta)\cap\partial X''\not\subset S'$. The $2$-manifold $Z_0$ is
homeomorphic to $Z$, and is therefore a planar surface with three
boundary curves.

To describe the $2$-manifold $Z_1$, we first consider the $2$-manifold
$Z^*\doteq((R\times \{1\})\cup T)\cap\partial((\Sigma\times I)\cup
T)$. We may obtain $Z^*$ from $R\times\{1\}$ by removing the interior
of $T\cap(R\times\{1\})$, which is a  union of two disjoint disks, and
attaching the annulus $\overline{(\partial
T)-(T\cap(R\times\{1\}))}$. Since $R\times\{1\}\cong R$ is connected and
$\chibar(R\times\{1\})=1$, it follows that
$Z^*$ is connected and that
$\chibar(Z^*)=3$. The surface $Z^*$ contains the simple closed curves
 $f(\partial D')$ and $\partial\delta$, which by
\ref{your father takes bubble baths} meet transversally in exactly
three points. In particular, their mod $2$ homological intersection
number in $Z^*$ is equal to $1$. Hence $f(\partial D')$ does not
separate the connected surface $Z^*$. The $2$-manifold $Z_1$ is
obtained from $Z^*$ by removing the interior of the
annulus $Z^*\cap\eta$ and
attaching the two components of
$\overline{(\partial\eta)-(Z^*\cap\eta)}$, which are disks. Since
$\eta$ is a regular neighborhood of $f(D')$ in $\overline{X-T}$, the
annulus $Z^*\cap\eta$ is a regular neighborhood of the non-separating
curve $f(\partial D')$ in $T^*$. Hence $Z_1$ is connected, and so
$Z_0$ and $Z_1$ are the components of $Y\cap\partial X''$.
Furthermore, we have $\partial Z_1=\partial(R\times\{1\})$, and since
$R\cong Z$ is a planar surface with three boundary curves, $Z_1$ has three
boundary curves. Since in addition we have
$\chibar(Z_1)=\chibar(Z^*)-2=1$, the surface $Z_1$ must be planar.

This establishes Property (\ref{without the elision}) for $Y$.

We now turn to the verification of Properties (\ref{and i who was})
and (\ref{his valet de sham}). By construction the pair $(Y,Z_0)$ is
homotopy equivalent to $(Z\cup f(D),Z)$.  Hence it suffices to
show that if $\star\in\inter Z$ is a base point, 
then $\pi_1(Z\cup f(D),\star )$ is
free of rank $2$ and has an ordered basis $(x,y)$, such that the image of
some geometric basis of $\pi_1(Z,\star )$ under the inclusion
homomorphism is either $(x,y^2xy)$ or $(x,y^2xy^{-1})$.

We fix an ordered basis $(t,u_1,u_2)$ of the rank-$3$ free 
group $\pi_1(Z\cup\alpha,\star)$ having the properties stated in the
conclusion of Lemma \ref{the magic}.  In particular, if $\star\in
\inter Z$ is any base point,  $\pi_1(Z\cup\alpha,\star)$ is free on
the generators $t$, $u_1$ and $u_2$. Furthermore, $Z\cup f(D)$ is
obtained from $Z\cup\alpha$ by attaching a $2$-cell, and the attaching
map realizes either the conjugacy class of $t^2u_1tu_2$ or that of
$t^2u_1t^{-1}u_2$ in $\pi_1(Z\cup\alpha,\star)$. Hence $\pi_1(Z\cup
f(D),\star)$ is given by either the presentation
\Equation\label{a boring young reverend}
|t,u_1,u_2:t^2u_1tu_2=1|,
\EndEquation
in the theta case, or the presentation
\Equation\label{they thought he would never end}
|t,u_1,u_2:t^2u_1t^{-1}u_2=1|,
\EndEquation
in the eyeglass case.

Let $\bar t$ and $\bar u_i$ denote the respective images in
$\pi_1(Z\cup f(D),\star)$ of the generators $t$ and $u_i$ of the free
group on $t$, $u_1$ and $u_2$. From the properties of the basis 
$(t,u_1,u_2)$ stated in the conclusion of Lemma \ref{the magic}, it
follows that $u_1$ and $u_2$ are the images of the elements of a
geometric basis of $\pi_1(Z,\star)$ under the inclusion homomorphism
$\pi_1(Z,\star)\to\pi_1(Z\cup f(D),\star)$.  On the other hand, it is
clear from the presentation (\ref{a boring young
reverend}) or (\ref{they thought he would never end}) that
$\pi_1(Z\cup f(D),\star)$ is free on the generators $x\doteq \bar{u}_1$ and
$y\doteq \bar{t}^{-1}$. Furthermore, in the theta case,
we have $\bar u_1=x$ and $\bar u_2=y x^{-1}y^2$; while in the
eyeglass case, 
we have $\bar u_1=x$ and
$\bar u_2=y^{-1} x^{-1}y^2$. Thus properties (\ref{and i who was}) and
(\ref{his valet de sham}) of $Y$ are established.

To prove that $Y$ has Property (\ref{for greater precision}), we first
observe that by construction the frontier of $Y$ is $\partial R\times
I$. Hence each component $A$ of the frontier has the form $c\times I$,
where $c$ is a component of $\partial R$. Thus $A$ is an annulus
having one boundary curve in the component $\Sigma
\times\{0\}$ of $\partial X''$. The other boundary curve of $A$ is
contained in $\Sigma\times\{1\}$ and is therefore disjoint from
$S'=\Sigma\times\{0\}$. In particular, $A$ has its boundary curves in
distinct components of $\partial X''$. To prove that $A$ is essential,
it therefore suffices to prove that it is $\pi_1$-injective in
$X''$. This in turn reduces to showing that $A$ is $\pi_1$-injective
(\ref{ridin'}) in $Y$ and in $\overline{X''-Y}$.

To prove $\pi_1$-injectivity of $A$ in $Y$, we observe that since the
surface $R$ is homeomorphic to a regular neighborhood of
$\Gamma$ in $S$, we have $\chibar(R)=1$. Hence the component $c$ of
$\partial R$ is $\pi_1$-injective in $R$. This implies that
$c\times\{0\}$ is $\pi_1$-injective in $Z_0=R\times\{0\}$. Since $Z_0$
is $\pi_1$-injective in $Y$ by Property (\ref{his valet de sham}), it
follows that $c\times\{0\}$ is $\pi_1$-injective in $Y$, and hence
that $A$ is $\pi_1$-injective in $Y$.

To prove $\pi_1$-injectivity of $A$ in $\overline{X''-Y}$, we observe
that by construction we have
$\overline{X''-Y}=\overline{\Sigma -R}\times I
\subset(\Sigma \times I)\cup K=X''$. Since $A=c\times I$, it
suffices to prove that the boundary component $c$ of $R$ is
$\pi_1$-injective in $\overline{\Sigma -R}$, or equivalently
that no component of $\overline{\Sigma -R}$ is a disk. But
this follows immediately from
\ref{when an innocent heart}. Thus Property (\ref{for greater precision})
of $Y$ is established.
\EndProof

\Remark\label{so's your aunt tilly}
It follows from Property (\ref{and i who was}) and (\ref{his valet
de sham}) of $Y$, as stated in the conclusion of Lemma \ref{as steward}, that the
inclusion homomorphism $H_1(Z_0;\ZZ_2)\to H_1(Y;\ZZ_2)$ is an
isomorphism.
\EndRemark

\Lemma\label{one last kiss}
Suppose that $X$ is a trimonic  manifold relative to $S$. Then
$\partial X$ has exactly one
component $T\ne S$, whose genus is equal to that of $S$, and $T$ is $\pi_1$-injective in $X$.
\EndLemma

\Proof
Let us fix a compact, connected $3$-dimensional submanifold $Y$ of $X$
having Properties (\ref{for greater precision})--(\ref{oh baby})
stated in the conclusion of Lemma \ref{as steward}. Define $Z_0$ and
$Z_1$ as in the statement of Property (\ref{without the elision}) of
that lemma.  According to Property (\ref{without the elision}), we
have $Z_0\subset S$, and $Z_1$ is contained in a component $T\ne S$ of
$\partial X$.  If $Q$ is a component of $\overline{X-Y}$, then by Property
(\ref{oh baby}) of $Y$, $Q$ is a trivial $I$-bundle.  By Property 
(\ref{for greater precision}), each component of the frontier of
$Q$ is an essential annulus with one boundary component in
$Z_0$ and one in $Z_1$.  Thus
the components of $\partial_hQ$ may be labeled $F_0$ and $F_1$ in such
a way that
 $F_0$ meets $Z_0 \subset S$ and $F_1$ meets
$Z_1 \subset T$.
Note that $F_0$ and $F_1$
are homeomorphic.  Since this holds for each component of $\overline{X-Y}$,
and since $Z_0$ and $Z_1$ are homeomorphic, it follows that
$S$ and $T$ have the same Euler characteristic, and furthermore that
they are the only components of $\partial X$.

It remains to show that $T$ is $\pi_1$-injective in $X$. According to
Proposition \ref{easy come}, it suffices to show that the inclusion
homomorphism $H_1(S;\ZZ_2)\to H_1(M;\ZZ_2)$ is surjective. For this
purpose we set $\calq=\overline{X-Y}$ and $\partial_0 \calq=\calq \cap
S$, we let $F$ denote the frontier of $Y$ in $X$, and we consider the
commutative diagram
$$\xymatrix{
H_1(F\cap S) \ar@{->}[r] \ar@{->}^{\alpha_1}[d] &
 H_1(Z_0)\oplus H_1(\partial_0 \calq)\ar@{->}[r]\ar@{->}^{\beta_1\oplus\gamma_1}[d]&
H_1(S) \ar@{->}[r] \ar@{->}^{\delta}[d] &
H_0(F\cap S)\ar@{->}[r]\ar@{->}^{\alpha_0}[d]&
H_0(Z_0)\oplus H_0(\partial_0 \calq)  \ar@{->}^{\beta_0\oplus\gamma_0}[d] \\
H_1(F) \ar@{->}[r] &
H_1(Y)\oplus H_1(\calq)\ar@{->}[r]&
H_1(M) \ar@{->}[r]&
 H_0(F)\ar@{->}[r]&
H_0(Y)\oplus H_0(\calq),
}
$$
where all homology groups are defined with $\ZZ_2$-coefficients, the
rows are segments of Mayer-Vietoris exact sequences, and the
homomorphisms $\alpha_i$,
$\beta_i$, $\gamma_i$ and $\delta$  are induced by inclusion.

Since $Z_0\subset S$ and $Z_1\subset T$, each component of $F$ is an
annulus with exactly one boundary curve in $S$. Hence the maps
$\alpha_0$ and $\alpha_1$ are isomorphisms.

If $Q$ is any component of $\calq$, then  Property (\ref{oh baby})
of $Y$, as stated in Lemma
\ref{as steward}, implies that $Q$ may be given the structure of a
trivial $I$-bundle over a $2$-manifold in
such a way that  $Q\cap \partial X$ is the horizontal boundary of
$Q$. Since $Q$ must contain at least one component of $Y$, exactly one
component of the horizontal boundary of $Q$ lies in $S$. It follows that
$\gamma_0$ and $\gamma_1$ are isomorphisms.
The map $\beta_0$ is an isomorphism because $Z_0$ and $Y$ are both
connected, while $\beta_1$ is an isomorphism by Remark \ref{so's your
aunt tilly}.

Since the $\alpha_i$, $\beta_i$ and $\gamma_i$ are isomorphisms, it
follows from the Five Lemma that $\delta$ is an isomorphism. In
particular it is surjective, as required.
\EndProof

\Lemma\label{it can't happen here} Suppose that $X$ is a non-degenerate
trimonic  manifold relative to a component $S$ of $\partial X$. 
Then there is no normal book of
$I$-bundles $\calw$ with $|\calw|=X$.
\EndLemma

\Proof
Let us fix a compact, connected $3$-dimensional submanifold $Y$ of $X$
having Properties (\ref{for greater precision})--(\ref{oh baby})
of the conclusion of Lemma \ref{as steward}. Define $Z_0$ as in the
statement of Property (\ref{without the elision}) there.

Suppose that $X=|\calw|$ for some normal book of $I$-bundles
$\calw$. Then Properties (\ref{for greater precision}) and
(\ref{without the elision}) of $Y$, as stated in \ref{as steward}, give hypotheses
(\ref{i'm now employed}) and (\ref{the devil may take him}) of Lemma
\ref{as a new-born lamb}.
 According to Remark \ref{so's your aunt tilly}, the inclusion
homomorphism $H_1(Z_0;\ZZ_2)\to H_1(Y;\ZZ_2)$ is an isomorphism. This
is Hypothesis (\ref{i'll never forsake him}) of Lemma \ref{as a
new-born lamb}.

Let us fix a base point $\star\in Z_0$.
According to the properties (\ref{without the elision})
(\ref{and i who was}) and (\ref{his valet de sham}) of $Y$ stated in the
conclusion of Lemma \ref{as steward}, 
$Z_0$ is a planar surface with three boundary curves, and 
the group $\pi_1(Y)$ is free of rank $2$, and
 there exists a pair of
generators $(x,y)$ of $\pi_1(Y,\star )$, such that either the pair
$(x,y x^{-1}y^2)$ or the pair $(x,y^{-1} x^{-1}y^2)$ is the image of
some geometric basis of $\pi_1(Z_0,\star )$ under the inclusion
homomorphism $\pi_1(Z_0,\star )\to\pi_1(Y,\star )$.
 Since neither of the pairs 
$(x,y x^{-1}y^2)$ or $(x,y^{-1} x^{-1}y^2)$
generates
the free group on $x$ and $y$,
the inclusion homomorphism
  $\pi_1(Z_0,\star )\to\pi_1(Y,\star )$ is not surjective.
 Hence Lemma \ref{as a new-born lamb}
implies that there is a solid torus
$L\subset Y$ such that $A\doteq L\cap Z_0$ is an annulus which is
homotopically non-trivial in $|\calw|$, and the
inclusion homomorphism $H_1(A;\ZZ)\to H_1(L,\ZZ)$ is not
surjective. 

Let $c$ denote a core curve of the annulus $A$. Since $c$ is in
particular homotopically non-trivial in $Z_0$, which is a planar
surface with three boundary curves, $c$ is parallel to one of the
boundary curves of $Z_0$. In view of the definition of a geometric
basis, it follows that the conjugacy class in $\pi_1(Y,\star)$ defined
by a suitably chosen orientation of $c$ is represented by one of the
elements $x$, $y x^{-1}y^2$, $x^{-1}y x^{-1}y^2$, $y^{-1} x^{-1}y^2$
or $x^{-1}y^{-1} x^{-1}y^2$.  On the other hand, since the inclusion
homomorphism $H_1(A;\ZZ)\to H_1(L,\ZZ)$ is not surjective, a
representative of a conjugacy class in $\pi_1(Y,\star)$ defined by $c$
must be an $n$-th power in $\pi_1(Y,\star)$ for some $n\ne\pm1$. But
none of the elements $x$, $y x^{-1}y^2$, $x^{-1}y x^{-1}y^2$, $y^{-1}
x^{-1}y^2$ or $x^{-1}y^{-1} x^{-1}y^2$ is a proper power in the free
group on $x$ and $y$.   This contradiction completes the proof.
\EndProof


\section{With a $(1,1,1)$ hexagon.}  \label{sec:111}

In this section and the next we will be working with
hyperbolic $3$-manifolds with totally geodesic boundary. As is
customary in low-dimensional topology, we shall implicitly carry over
the PL results proved in Sections \ref{sec:trimonic} and
\ref{sec:boibs} to the smooth category. For example, to say that a
smooth manifold $X$ is a trimonic  manifold
relative to a component $S$ of $\partial X$ means that the pair
$(X,S)$ is topologically homeomorphic to a PL pair $(X'S')$ such that
$X'$ is a trimonic  manifold
relative to  $S'$. Likewise, to say that a smooth manifold $X$ has the form
$|\calw|$ for some normal book of $I$-bundles $\calw$ means that $X$
is topologically homeomorphic to a PL manifold which  has the form
$|\calw|$ for some normal book of $I$-bundles $\calw$.

Here we will describe some topological consequences of the presence
of a $(1,1,1)$ hexagon in $\widetilde{N}$, when the shortest return
path of $N$ is not too long.  The main result of the section,
Proposition \ref{111}, asserts the existence of a trimonic 
manifold in $N$ under these circumstances.  Below we prove a series of
separate lemmas concerning the geometry of $(1,1,1)$ hexagons, from
which the proposition follows quickly.  The first follows from
\cite[Lemma 3.2]{KM}, but for self-containedness we prove it here.

\begin{lemma}  \label{embeddedl1}  
Let $N$ be a compact hyperbolic $3$-manifold with totally geodesic
boundary, and $\lambda, \lambda' \subset \widetilde{N}$ short cuts
with length $\ell_1$.  Then $\lambda =\lambda'$ or $\lambda \cap
\lambda' = \emptyset$.  \end{lemma}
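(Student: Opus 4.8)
The plan is to rule out a transverse or overlapping intersection of two shortest short cuts by a length‑shortening argument, after first disposing of the case where $\lambda$ and $\lambda'$ issue from a common boundary plane. Two standing facts will be used throughout. First, $\ell_1$ is the minimal distance between two distinct components of $\partial\widetilde N$, so any path in $\widetilde N$ joining distinct components has length at least $\ell_1$. Second, the interior of a short cut of length $\ell_1$ lies in the interior of $\widetilde N$: its supporting geodesic is perpendicular to each of the two relevant planes and hence meets each of them in exactly one point, so an interior point of $\lambda$ lying on some component $\Pi'$ would yield a proper subsegment of $\lambda$, of length $<\ell_1$, joining $\Pi'$ to one of the two endpoint planes (necessarily distinct from $\Pi'$), a contradiction.

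Write $\lambda$ for the short cut joining components $\Pi_1,\Pi_2$ and $\lambda'$ for the one joining $\Pi_3,\Pi_4$, and let $L,L'$ be their supporting geodesics. \emph{Case A: $\{\Pi_1,\Pi_2\}\cap\{\Pi_3,\Pi_4\}\ne\emptyset$}, say $\Pi_1=\Pi_3$. Then $L$ and $L'$ are both perpendicular to $\Pi_1$; taking the plane $\Sigma$ perpendicular to $\Pi_1$ through their two feet, one checks that $L\subset\Sigma$ and $L'\subset\Sigma$ and that both are perpendicular to the line $\Sigma\cap\Pi_1$ inside $\Sigma\cong\mathbb{H}^2$, whence $L=L'$ or $L\cap L'=\emptyset$. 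In the disjoint case we are done. If $L=L'$, then $L$ meets $\Pi_1$ in a single point $q$, which is the foot of both $\lambda$ and $\lambda'$ (a small variant if $\Pi_1=\Pi_4$ rather than $\Pi_3$: $q$ is the far endpoint of $\lambda'$); since each of $\lambda,\lambda'$ is the length‑$\ell_1$ subarc of $L$ issuing from $q$ into the side containing $\widetilde N$, we conclude $\lambda=\lambda'$.

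\emph{Case B: $\Pi_1,\Pi_2,\Pi_3,\Pi_4$ are distinct}; here I claim $\lambda\cap\lambda'=\emptyset$. A common point $p$ of $\lambda$ and $\lambda'$ cannot be an endpoint of either arc: an endpoint lies in $\partial\widetilde N$, hence by the interior remark it cannot be interior to the other arc, and being an endpoint of both would force two disjoint components to meet. So $p$ is interior to both, and then either $L=L'$ or $\lambda,\lambda'$ cross transversally at $p$. If $L=L'$, parametrize $L$ by arclength so that $\lambda=[s,s+\ell_1]$ and $\lambda'=[t,t+\ell_1]$ with $s\ne t$, say $s<t<s+\ell_1$; then $[t,s+\ell_1]\subset L$ joins an endpoint of $\lambda'$ to an endpoint of $\lambda$, which lie on distinct components, yet has length $s+\ell_1-t<\ell_1$, a contradiction. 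If instead $\lambda$ and $\lambda'$ cross transversally at $p$, let $a_1,a_2$ be the lengths of the two subarcs of $\lambda$ from $p$ to its endpoints on $\Pi_1,\Pi_2$, and $a_3,a_4$ those of $\lambda'$ to $\Pi_3,\Pi_4$; then $a_1+a_2=a_3+a_4=\ell_1$, so $\min\{a_1+a_3,\,a_2+a_4\}\le\ell_1$, say $a_1+a_3\le\ell_1$. The concatenation of the $p$‑to‑$\Pi_1$ and $p$‑to‑$\Pi_3$ subarcs is a path joining the distinct components $\Pi_1,\Pi_3$ with a genuine corner at $p$, so the geodesic between its endpoints, which lies in the convex set $\widetilde N$, is strictly shorter, giving $d(\Pi_1,\Pi_3)<\ell_1$, a contradiction.

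I expect the only delicate point to be the bookkeeping in Case B: ensuring the endpoint-versus-interior analysis for $p$ is exhaustive and, in each subcase, that the shortened path really does connect two distinct components of $\partial\widetilde N$ (so that minimality of $\ell_1$ applies). No serious difficulty beyond this is anticipated; the plane-geometry fact in Case A and the strict triangle inequality used in Case B are standard.
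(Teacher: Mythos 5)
Your argument is correct and rests on the same two ideas as the paper's proof: a strict-triangle-inequality shortening at a putative intersection point (to produce a path between two boundary planes of length strictly less than $\ell_1$), and the fact that two distinct geodesics perpendicular to a common boundary plane cannot meet. You organize this as an explicit case analysis (shared boundary plane or not, collinear or not), which is more detailed than the paper's terse treatment but not substantively different.
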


\begin{proof}  
Suppose $\lambda$ intersects $\lambda'$ at a single point $y \in
\lambda$.  Let $\Pi$ be the component of $\partial \widetilde{N}$
containing the endpoint $x$ of $\lambda$ closest to $y$, and let
$\Pi'$ be the component containing the endpoint $x'$ of $\lambda'$
closest to $y$.  The subarcs $[x,y]$ and $[x',y]$ of $\lambda$ and
$\lambda'$, respectively, each have length at most $\ell_1/2$.  They
meet at $y$ at an angle properly less than $\pi$, since $\lambda \neq
\lambda'$ and each is geodesic.  But then $\Pi$ and $\Pi'$ are
components of $\partial \widetilde{N}$ at distance less than $\ell_1$,
so they are equal.  But since $\lambda$ and $\lambda'$ are geodesic
arcs, each perpendicular to $\Pi$, in $\widetilde{N}$ they are
disjoint or equal, a contradiction.  \end{proof}

\Remark \label{reallyembeddedl1} 
Suppose $N$ is a compact hyperbolic $3$-manifold with totally geodesic
boundary and $g : \widetilde{N} \rightarrow \widetilde{N}$ is a
covering transformation.  If $\lambda$ is a short cut with length
$\ell_1$, then according to Lemma \ref{embeddedl1}, either $g(\lambda)
= \lambda$ or $g(\lambda) \cap \lambda = \emptyset$.  But in the first
case, $g$ would fix a point in $\lambda$, a contradiction.  It follows
that every short cut of length $\ell_1$ is embedded in $N$ by the
universal cover.  \EndRemark

\begin{lemma} 
\label{uniquel1} 
Let $N$ be a compact hyperbolic $3$-manifold with $\partial N$
connected, totally geodesic, and of genus $2$, and suppose $\ell_1$
satisfies the bound below.
$$\cosh \ell_1 < \frac{\cos(2\pi/9)}{2\cos(2\pi/9)-1} = 1.4396...$$
Then $\ell_2 > \ell_1$; ie, the shortest return path in $N$ is
unique. \end{lemma}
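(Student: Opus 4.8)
The plan is to argue by contradiction. Since the return paths are enumerated in non-decreasing order of length, if the conclusion fails then $\ell_2=\ell_1$, so $N$ has two distinct return paths $\lambda_1\ne\lambda_2$, each of the minimal length $\ell_1$. I will derive a contradiction by producing four pairwise-disjoint embedded metric disks in $\partial N$, centred at the four feet of $\lambda_1$ and $\lambda_2$, whose common radius is too large to be compatible with Bor\"oczky's theorem on the genus-$2$ surface $\partial N$.

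The first ingredient is a numerical observation. Let $R$ be the function of $\ell_1$ from Lemma \ref{d11vsl1} and $R''$ the constant from Lemma \ref{KLM}. By the hyperbolic double-angle formula together with (\ref{R}), $\cosh 2R=2\cosh^2R-1=1+\tfrac{1}{\cosh\ell_1-1}$, while a short computation from (\ref{R''}) gives $\cosh 2R''=\tfrac{\cos(2\pi/9)}{1-\cos(2\pi/9)}=1+\tfrac{1}{c-1}$, where $c=\tfrac{\cos(2\pi/9)}{2\cos(2\pi/9)-1}=1.4396\ldots$ is the constant in the hypothesis. Since $t\mapsto 1+\tfrac{1}{t-1}$ is strictly decreasing on $(1,\infty)$, the hypothesis $\cosh\ell_1<c$ is equivalent to $\cosh 2R>\cosh 2R''$, i.e.\ to $R>R''$.

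The second ingredient is the estimate that any two distinct feet, lying on a common component $\Pi$ of $\partial\widetilde N$, of short cuts projecting to $\lambda_1$ or $\lambda_2$ are more than $2R$ apart in $\Pi$. Let such feet be $x$ and $y$, feet of short cuts $\mu$ and $\mu'$. A short cut joins the closest points of two \emph{distinct} components of $\partial\widetilde N$ and is the common perpendicular of those two planes, so it is determined by the unordered pair of planes it joins; hence $\mu\ne\mu'$, and the components $\Pi_1,\Pi_2$ carrying the other feet of $\mu,\mu'$ are distinct and both differ from $\Pi$. Thus $\Pi,\Pi_1,\Pi_2$ are mutually disjoint, so by Lemmas \ref{geodhex} and \ref{rtanghex} they bound a right-angled hexagon $C$ whose internal edges are $\mu$, $\mu'$ and the common perpendicular $\mu''$ of $\Pi_1$ and $\Pi_2$, and whose external edge lying in $\Pi$ joins the feet $x$ and $y$ of $\mu$ and $\mu'$, hence has length $d(x,y)$. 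Since $\mu$ and $\mu'$ have length $\ell_1=\ell_2$ and $\mu''$ is a short cut, so has length $\ell_k\ge\ell_1$, the right-angled hexagon rule (\ref{hexrule}) yields
$$\cosh d(x,y)=\frac{\cosh^2\ell_1+\cosh\ell_k}{\sinh^2\ell_1}\ \ge\ 1+\frac{1}{\cosh\ell_1-1}=\cosh 2R,$$
so $d(x,y)\ge 2R$, just as in (\ref{KMd11}). In particular $d_{11},d_{22},d_{12}>0$, so $\lambda_1$ and $\lambda_2$ have four distinct feet on $\partial N$.

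Finally I assemble the packing argument. Fix a component $\Pi$ of $\partial\widetilde N$. The set of feet on $\Pi$ of short cuts projecting to $\lambda_1$ or $\lambda_2$ is invariant under $\pi_1(\partial N)=\mathrm{Stab}(\Pi)$, and by the second ingredient any two of its points are more than $2R$ apart; hence the open metric disk of radius $R$ about each such point is embedded and distinct ones are disjoint. These disks descend to four pairwise-disjoint embedded disks of radius $R$ in $\partial N$, centred at the four feet of $\lambda_1$ and $\lambda_2$. But the opening step of the proof of Lemma \ref{KLM}---Bor\"oczky's theorem applied to four equal disjoint disks on the genus-$2$ surface $\partial N$---bounds the common radius of any such configuration by $R''$, contradicting $R>R''$. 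Hence $\ell_2>\ell_1$. I expect the delicate point to be the estimate on $d_{12}$: one must check carefully that the three planes $\Pi,\Pi_1,\Pi_2$ are genuinely distinct (so that Lemmas \ref{geodhex}--\ref{rtanghex} apply) and that the edge of $C$ in $\Pi$ is precisely the arc realising $d(x,y)$. The remaining steps---embeddedness of the disks, distinctness of the four feet, and the elementary algebra relating $R$, $R''$ and $\cos(2\pi/9)$---are routine.
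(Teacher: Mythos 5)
Your proof is correct and takes essentially the same route as the paper's: argue by contradiction, use the right-angled hexagon rule (as in Lemma \ref{d11vsl1}) to get $d_{11},d_{12},d_{22}\geq 2R$, pack four disjoint radius-$R$ disks on $\partial N$, and invoke Bor\"oczky's bound $R\leq R''$ from Lemma \ref{KLM} to translate into the stated bound on $\cosh\ell_1$. Your version is somewhat more careful than the paper's (you verify distinctness of the three planes and of the four feet, and spell out the algebra relating $R$, $R''$, and $\cos(2\pi/9)$), but the underlying argument is identical.
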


\begin{proof}  
Suppose $\ell_2 = \ell_1$.  Applying the right-angled hexagon rule as
in the proof of Lemma \ref{d11vsl1}, we find that each of $d_{11}$,
$d_{12}$, and $d_{22}$ is at least $2R$, for $R$ the function of
$\ell_1$ defined there.  Then a disk of radius $R$ is embedded around
each of the feet of $\lambda_1$ and $\lambda_2$, so that none of these
disks overlap.  Bor\"oczky's bound on the radius of four disks of
equal area embedded without overlapping on a surface of genus 2 is the
quantity $R''$ defined in Lemma \ref{KLM}.  Setting $R = R''$ and
solving for $\ell_1$ yields the quantity of the bound above.
\end{proof}

\begin{lemma}  
\label{noboundarycross}
Let $N$ be a compact hyperbolic $3$-manifold with totally geodesic
boundary.  Suppose $C$ and $C'$ are distinct $(1,1,1)$ hexagons in
$\widetilde{N}$ with exterior edges on the same component of $\partial
\widetilde{N}$.  Then $C \cap C'$ is either empty or a single interior
edge.  \end{lemma}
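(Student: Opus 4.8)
The plan is to fix the common boundary component $\Pi$, describe how each hexagon sits near it, and then split into the case where $C$ and $C'$ share an interior edge and the case where they do not. By Lemma~\ref{rtanghex} (via Lemma~\ref{geodhex}), $C$ lies in a geodesic plane $\Pi^C$ and $C'$ in a plane $\Pi^{C'}$, and since a $(1,1,1)$ hexagon has its three exterior edges on three \emph{distinct} components of $\partial\widetilde{N}$, each of $C,C'$ has exactly one exterior edge on $\Pi$; call these $e$ and $e'$. The right-angled hexagon rule (\ref{hexrule}), applied to a hexagon all of whose interior edges have length $\ell_1$, shows that each of its exterior edges has length $2R$, where $R$ is the quantity of Lemma~\ref{d11vsl1} (one has $\cosh 2R=(\cosh^2\ell_1+\cosh\ell_1)/\sinh^2\ell_1$). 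The endpoints of $e$ (resp.\ $e'$) are the feet on $\Pi$ of the interior edges of $C$ (resp.\ $C'$) adjacent to it, and these interior edges are short cuts of length $\ell_1$. By Lemma~\ref{d11vsl1} the feet on $\Pi$ of distinct $\ell_1$-short cuts are at distance $\ge 2R$, so the closed radius-$R$ disks in $\Pi$ about these feet have disjoint interiors; and two $\ell_1$-short cuts with a common foot on $\Pi$ coincide, each being the length-$\ell_1$ initial segment of the perpendicular ray to $\Pi$ at that foot. Hence $C$ and $C'$ share a foot on $\Pi$ if and only if they share an interior edge.

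Suppose first that $C$ and $C'$ share an interior edge $\lambda$. The two components of $\partial\widetilde{N}$ met by the endpoints of $\lambda$ are determined by $\lambda$; call them $\Sigma_1$ and $\Sigma_2$. In each hexagon $\lambda$ is the common perpendicular of $\Sigma_1$ and $\Sigma_2$, and the exterior edges adjacent to $\lambda$ lie on $\Sigma_1$ and $\Sigma_2$; thus each of $C,C'$ is determined by the single further component of $\partial\widetilde{N}$ it meets, and since $C\ne C'$ these differ. Consequently either $\Pi^C\ne\Pi^{C'}$, so that $\Pi^C\cap\Pi^{C'}$ is the complete geodesic $\widehat\lambda$ containing $\lambda$; or $\Pi^C=\Pi^{C'}$, in which case $C$ and $C'$ lie on opposite sides of $\widehat\lambda$, since a $(1,1,1)$ hexagon is determined by one interior edge together with the side of it it occupies. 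In either case, $\lambda$ being an edge of the convex hexagon $C$, one has $C\cap\widehat\lambda=\lambda$, and likewise $C'\cap\widehat\lambda=\lambda$, while $C\cap C'\subseteq\widehat\lambda$; therefore $C\cap C'=\lambda$, a single interior edge.

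Now suppose $C$ and $C'$ share no interior edge. Then they share no foot on $\Pi$, so the four feet of $e$ and $e'$ on $\Pi$ are distinct. A short computation with the disjoint disks then gives $e\cap e'=\emptyset$: a common point of $e$ and $e'$ would be a tangency point of two of the radius-$R$ disks, which forces all four feet onto a single geodesic through that point at distance $R$ from it, hence at most two feet. The same argument applies on every component of $\partial\widetilde{N}$ met by both hexagons, so no exterior edge of $C$ meets an exterior edge of $C'$. To conclude that $C\cap C'=\emptyset$, the plan is to use that a $(1,1,1)$ hexagon is the union of three ``meridian pentagons,'' one lying in the muffin $\mathit{Muf}_{\ell_1}$ about each of its interior edges --- the generating pentagon of the muffin being a fundamental domain for the order-$3$ rotational symmetry of a $(1,1,1)$ hexagon (cf.\ Section~\ref{sec:KM}). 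Since $\mathit{Muf}_{\ell_1}$ embeds in $N$ under the universal covering \cite[Lemma~3.2]{KM}, the muffins about distinct $\ell_1$-short cuts are pairwise disjoint in $\widetilde{N}$ and can meet only along $\partial\widetilde{N}$, at tangency points of their boundary disks --- and such meetings have just been excluded. So $C\cap C'=\emptyset$, which completes the dichotomy.

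The main obstacle is this last case: ruling out a two-dimensional, or a stray one-point, overlap of $C$ and $C'$ when no interior edge is shared. The muffin decomposition reduces it to the mutual disjointness of muffins about distinct shortest short cuts, which I would extract from \cite[Lemma~3.2]{KM}. Alternatively one can argue directly: when $\Pi^C=\Pi^{C'}$, use planar convexity --- $C$ and $C'$ lie in complementary half-planes of $\Pi^C$ bounded by the perpendiculars to $\Pi\cap\Pi^C$ through the mutually nearest feet of $e$ and $e'$, which are ultraparallel since those feet are at positive distance; and when $\Pi^C\ne\Pi^{C'}$, confine $C\cap C'$ to the geodesic $\Pi^C\cap\Pi^{C'}$, which meets $\partial\widetilde{N}$ in at most one point, so that $e\cap e'=\emptyset$ already forces any overlap off $\partial\widetilde{N}$, after which it can be removed edge by edge. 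In every approach, the delicate part is the analysis of the tangency configurations of the radius-$R$ disks along $\partial\widetilde{N}$.
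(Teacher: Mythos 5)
Your overall plan is the same as the paper's: restrict attention to the exterior edges $e$, $e'$ on the shared component $\Pi$, observe that each has length $d_{11}=2R$ and that all four feet are pairwise at distance $\ge d_{11}$, and split into the cases that $C,C'$ share a foot (equivalently, an interior edge) or not. Your sub-argument for the no-shared-foot case differs in flavor: the paper proves $e\cap e'=\emptyset$ by an explicit trigonometric estimate (any point of $\Pi$ at distance $\ge d_{11}$ from both endpoints of $e$ is at distance $>d_{11}/2$ from $e$), whereas you argue via tangency of the radius-$R$ disks about the feet; both are correct, and yours is arguably cleaner. You are also more careful than the paper in the shared-edge case, where the paper simply asserts that $C\cap C'$ is a single interior edge; your argument there is essentially right, though the assertion that ``a $(1,1,1)$ hexagon is determined by one interior edge together with the side of it it occupies'' (in the coplanar sub-case) deserves a sentence of justification --- e.g., if $C,C'$ lay in the same plane on the same side of $\widehat\lambda$, their adjacent exterior edges on $\Sigma_1$ would emanate from the same foot in the same direction with the same length $2R$, hence coincide, forcing the third boundary components of $C$ and $C'$ to agree and thus $C=C'$.

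Where you and the paper are both terse is the step from ``$e\cap e'=\emptyset$'' to ``$C\cap C'=\emptyset$.'' The paper states this as an immediate consequence without argument. You are more honest about the residual work, and you offer two routes: the muffin decomposition and a direct planar argument. The direct argument is complete when $\Pi^C=\Pi^{C'}$ (the two hexagons then lie in disjoint half-strips bounded by the ultraparallel perpendiculars to $\Pi^C\cap\Pi$ at the feet), but when $\Pi^C\ne\Pi^{C'}$ the reduction to the geodesic $\Pi^C\cap\Pi^{C'}$ by itself does not close the gap. The muffin route is the more promising one, but note that what you need is stronger than the statement of \cite[Lemma~3.2]{KM} as quoted in Section~\ref{sec:KM}: you need the muffins about \emph{all} distinct $\ell_1$-short cuts in $\widetilde N$ (not merely the distinct lifts of a single return path) to have pairwise disjoint interiors. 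This is plausible and probably extractable from Kojima--Miyamoto's proof, as you say, but it is a step that must be supplied, not cited. So: same approach as the paper, a valid alternative for the non-crossing of $e$ and $e'$, more detail than the paper in the shared-edge case, and a shared-but-acknowledged thinness at the final deduction.
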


\begin{proof}  
Let $e$ and $e'$ be exterior edges of $C$ and $C'$ on the same
component of $\partial \widetilde N$.  The endpoints of $e$ and $e'$
are feet of lifts of the shortest return path; any such pair has
distance at least $d_{11}$.  Since $C$ and $C'$ are distinct, so are
$e$ and $e'$; if they share an endpoint then $C \cap C'$ consists of
an interior edge.

Otherwise, $e$ is a geodesic arc of length $d_{11}$ connecting its
endpoints $a$ and $b$ and $e'$ an arc of the same length connecting
its endpoints $a'$ and $b'$, with $d(a,a')$, $d(a,b')$, $d(b,a')$, and
$d(b,b')$ all at least $d_{11}$.  Some hyperbolic trigonometry shows
that any point at distance at least $d_{11}$ from $a$ and $b$
satisfies $\cosh \ell \geq \cosh d_{11}/\cosh (d_{11}/2),$ where
$\ell$ is its distance from $e$.  Twice this distance is larger than
$d_{11}$; thus $e'$ does not cross $e$, and so $C \cap C' =
\emptyset$.  \end{proof}

\Remark \label{embeddedboundary} 
Suppose $N$ is a compact hyperbolic $3$-manifold with totally geodesic
boundary and $g: \widetilde{N} \rightarrow \widetilde{N}$ is a
covering transformation.  If $C$ is a $(1,1,1)$ hexagon, and an
external edge of $g(C)$ intersects an external edge of $C$, then by
Lemma \ref{noboundarycross}, either $g(C) = C$ or $g(C) \cap C$ is a
single internal edge.  In the former case, $g$ fixes a point in $C$, a
contradiction.  It follows that the union of the interiors of external
edges of $C$ projects homeomorphically to $N$.  \EndRemark

\begin{lemma}\label{l1avoids111}  
Let $N$ be a compact hyperbolic $3$-manifold with totally geodesic
boundary, and suppose $\cosh \ell_1 \leq 1.215$.  If $C$ is a
$(1,1,1)$ hexagon in $\widetilde{N}$ and $\lambda$ is a short cut of
length $\ell_1$, then $\lambda$ is an internal edge of $C$ or $\lambda
\cap C = \emptyset$.  \end{lemma}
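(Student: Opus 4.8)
The plan is to reduce the statement to the disjointness of certain copies of the Kojima--Miyamoto muffin $\mathit{Muf}_{\ell_1}$ inside $\widetilde{N}$. If $\lambda$ is one of the three internal edges of $C$ there is nothing to prove, so I would assume $\lambda$ is distinct from each of them and show $\lambda\cap C=\emptyset$.

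First I would invoke Lemma \ref{uniquel1}: since $\cosh\ell_1\le 1.215<1.4396\ldots$, the shortest return path $\lambda_1$ is unique. Because $C$ is a $(1,1,1)$ hexagon, each of its internal edges $\lambda_{12},\lambda_{13},\lambda_{23}$ is a short cut of length $\ell_1$, so $\lambda,\lambda_{12},\lambda_{13},\lambda_{23}$ are four short cuts that all project, under the universal covering $p\colon\widetilde{N}\to N$, to $\lambda_1$, and by assumption they are four \emph{distinct} lifts of $\lambda_1$. By \cite[Lemma 3.2]{KM}, $\mathit{Muf}_{\ell_1}$ embeds in $N$ --- write $\mu\subset N$ for the image, a $3$-ball with axis $\lambda_1$ --- and a copy of $\mathit{Muf}_{\ell_1}$ is embedded in $\widetilde{N}$ around each length-$\ell_1$ short cut; let $M_\lambda,M_{12},M_{13},M_{23}$ denote the copies around $\lambda,\lambda_{12},\lambda_{13},\lambda_{23}$. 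Since $\mu$ is simply connected, $p^{-1}(\mu)$ is a disjoint union of copies of $\mu$, each mapped homeomorphically onto $\mu$ and each meeting $p^{-1}(\lambda_1)$ in exactly one lift of $\lambda_1$; thus the components of $p^{-1}(\mu)$ are in bijection with the lifts of $\lambda_1$, and $M_\lambda,M_{12},M_{13},M_{23}$ are the four components containing $\lambda,\lambda_{12},\lambda_{13},\lambda_{23}$. Being distinct components of $p^{-1}(\mu)$, they are pairwise disjoint; in particular $M_\lambda\cap(M_{12}\cup M_{13}\cup M_{23})=\emptyset$.

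Next I would show $C\subseteq M_{12}\cup M_{13}\cup M_{23}$. Recall from Section \ref{sec:KM} that the $(1,1,1)$ hexagon with internal edges of length $\ell_1$ carries an orientation-preserving symmetry group of order three cyclically permuting its internal edges, and that the planar pentagon revolved about its base to build $\mathit{Muf}_{\ell_1}$ is exactly a fundamental domain for this group, having an internal edge of the hexagon as its base. Applied to $C$, the three images of such a pentagon under the symmetry group tile $C$, so $C=P_{12}\cup P_{13}\cup P_{23}$, where $P_{ij}$ is the one with $\lambda_{ij}$ as its base. For each pair, $M_{ij}$ is the solid of revolution of $P_{ij}$ about $\lambda_{ij}$ (the solid of revolution is independent of the angular position of the pentagon), so $P_{ij}$ --- its zero-angle slice --- lies in $M_{ij}$. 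Hence $C\subseteq M_{12}\cup M_{13}\cup M_{23}$, and since $\lambda\subset M_\lambda$ we get $\lambda\cap C\subseteq M_\lambda\cap(M_{12}\cup M_{13}\cup M_{23})=\emptyset$.

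The step needing the most care in the final write-up is the geometric identification $C=P_{12}\cup P_{13}\cup P_{23}$ together with $P_{ij}\subseteq M_{ij}$: one must verify that the muffin pentagon is literally the fundamental domain of the order-three symmetry of the $(1,1,1)$ hexagon --- with an internal edge as base and the two sides adjacent to the base, of length $R$, being halves of the two neighbouring external edges --- so that each $P_{ij}$ sits inside $C$ and its revolution about $\lambda_{ij}$ is exactly $M_{ij}$. The hypothesis $\cosh\ell_1\le 1.215$ enters only through Lemma \ref{uniquel1}, which guarantees that $\lambda$ and the three internal edges of $C$ are lifts of a \emph{single} embedded ball in $N$; this is precisely what makes the four muffins pairwise disjoint.
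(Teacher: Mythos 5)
Your argument is correct, but it takes a genuinely different route from the paper's. The paper's proof proceeds by direct case analysis on whether $\lambda$ lies in the plane $\Pi$ of $C$: if so, Lemma~\ref{embeddedl1} forces $\lambda$ to be an internal edge; if not, it meets $C$ transversely in a point, and then the bound $\ell_2\ge\max\{2\ell_1,2A\}$ from Lemma~\ref{l2twicel1} (this is where $\cosh\ell_1\le1.215$ is used) shows a nearby boundary component is within $\ell_1$ of both $\Pi_1$ and $\Pi_2$, producing via Lemma~\ref{geodhex} a second $(1,1,1)$ hexagon $C'$ with $\lambda$ as an internal edge and thus a forbidden intersection $C\cap C'$ by Lemma~\ref{noboundarycross}. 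Your proof instead leverages uniqueness of $\lambda_1$ (Lemma~\ref{uniquel1}) plus Kojima--Miyamoto's muffin-embedding result: the pentagon generating $\mathit{Muf}_{\ell_1}$ is a fundamental domain for the order-$3$ symmetry of the $(1,1,1)$ hexagon (the paper itself records this just before Lemma~\ref{d11vsl1}'s companion in Section~\ref{sec:KM}), so $C$ is tiled by three slices of the muffins about its internal edges, and since $\lambda$ is a fourth distinct lift of $\lambda_1$ the muffin about $\lambda$ is a fourth disjoint component of $p^{-1}(\mu)$. What your approach buys: it is conceptually cleaner, dispenses with Lemmas~\ref{l2twicel1} and~\ref{noboundarycross}, and in fact needs only $\cosh\ell_1<\frac{\cos(2\pi/9)}{2\cos(2\pi/9)-1}\approx1.4396$ (the hypothesis of Lemma~\ref{uniquel1}) rather than the stronger $\cosh\ell_1\le1.215$. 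What it costs: it depends on the muffin-embedding Lemma~3.2 of \cite{KM}, a heavier black box than the elementary hexagon trigonometry the paper uses. Two small points to tighten in a final write-up: the clause ``since $\mu$ is simply connected, $p^{-1}(\mu)$ is a disjoint union of copies of $\mu$'' is better justified by observing directly that $p|_{M_{12}}$ injective plus freeness of the covering action forces the translates $\gamma(M_{12})$ to be pairwise disjoint and that $p^{-1}(\mu)=\bigsqcup_\gamma\gamma(M_{12})$; and you should note explicitly that the muffin about a given length-$\ell_1$ short cut is uniquely determined by the short cut, so each translate $\gamma(M_{12})$ is indeed the muffin about $\gamma(\lambda_{12})$.
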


\begin{proof}  
Suppose $\lambda$ and $C$ are as in the lemma, and $\lambda \cap C$ is
nonempty.  Let $\Pi_1$ and $\Pi_2$ be the components of $\partial
\widetilde{N}$ containing the endpoints of $\lambda$, and let $\Pi$ be
the geodesic plane containing $C$.  Suppose first that $\lambda
\subset \Pi$.  If $\lambda$ is not contained in $C$, let $x$ be an
endpoint of $\lambda \cap C$ contained in the interior of $\lambda$.
Then $x$ is a point of intersection between $\lambda$ and an internal
edge of $C$, since the external edges of $C$ are contained in
$\partial \widetilde{N}$ and $\lambda$ is properly embedded.  But each
internal edge of $C$ is a short cut with length $\ell_1$, so this
contradicts Lemma \ref{embeddedl1}.  It follows that if $\lambda
\subset \Pi$, $\lambda \subset C$.  But $C$ intersects only three
components of $\partial \widetilde{N}$ --- the components joined by
its internal edges.  Hence since $C$ intersects $\Pi_1$ and $\Pi_2$,
$\lambda$ is an internal edge of $C$.

Now suppose $\lambda$ is not contained in $\Pi$.  Then $\lambda$
intersects $C$ transversely in a single point $x$.  There is a
component $\Pi'$ of $\partial \widetilde{N}$ such that $\Pi' \cap \Pi$
contains an external edge of $C$ and $x$ is distance at most $A$ from
$\Pi'$.  Then the distance from $\Pi'$ to each of $\Pi_1$ and $\Pi_2$
is less than $A + \ell_1$.  By Lemma \ref{l2twicel1}, $\ell_2$ is at
least twice $A$ and $\ell_1$; hence the distance from $\Pi'$ to each
of $\Pi_1$ and $\Pi_2$ is $\ell_1$, since it is less than $\ell_2$.
By Lemma \ref{geodhex}, there is a $(1,1,1)$ hexagon $C'$ with
$\lambda$ as an internal edge and external edges in $\Pi'$, $\Pi_1$,
and $\Pi_2$.  But then $C' \cap C$ contains $\lambda \cap C$,
contradicting Lemma \ref{noboundarycross}.
\end{proof}

\begin{lemma}  
Let $N$ be a compact hyperbolic $3$-manifold with totally geodesic
boundary, and suppose $\cosh \ell_1 \leq 1.215$.  If $C$ and $C'$ are
distinct $(1,1,1)$ hexagons in $\widetilde{N}$, then $C \cap C'$ is
empty or a single internal edge of each.  \label{Cembedded}
\end{lemma}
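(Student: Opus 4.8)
The plan is to prove the statement by a case division, the second case of which reduces to showing $C\cap C'=\emptyset$. We may assume $C\cap C'\neq\emptyset$. Write $\Pi_1,\Pi_2,\Pi_3$ for the components of $\partial\widetilde N$ carrying the external edges of $C$, and $\Pi_1',\Pi_2',\Pi_3'$ for those of $C'$, and split according to whether the sets $\{\Pi_1,\Pi_2,\Pi_3\}$ and $\{\Pi_1',\Pi_2',\Pi_3'\}$ meet.

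First suppose they share a component, say $\Pi_1=\Pi_1'$. Then $C$ and $C'$ have external edges on a common component of $\partial\widetilde N$, so Lemma \ref{noboundarycross} shows that $C\cap C'$ is a single interior edge $e$ (it is nonempty by assumption). As $e$ is an internal edge of the $(1,1,1)$ hexagon $C$, it is a short cut of length $\ell_1$; since $e\subseteq C'$ and $\cosh\ell_1\leq1.215$, Lemma \ref{l1avoids111} applied to $e$ and $C'$ forces $e$ to be an internal edge of $C'$ as well. This gives the conclusion of the lemma in this case.

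Now suppose $\{\Pi_1,\Pi_2,\Pi_3\}\cap\{\Pi_1',\Pi_2',\Pi_3'\}=\emptyset$; I claim $C\cap C'=\emptyset$, contradicting our assumption. The first step is to show $\partial C$ misses $C'$ and $\partial C'$ misses $C$. Each internal edge of $C$ is a short cut of length $\ell_1$, so by Lemma \ref{l1avoids111} it is either disjoint from $C'$ or an internal edge of $C'$; the latter would place two of the $\Pi_i$ among the $\Pi_j'$, which is excluded, so the internal edges of $C$ miss $C'$. Each external edge of $C$ lies in some $\Pi_i\subseteq\partial\widetilde N$, while $C'\cap\partial\widetilde N=\bigcup_j(C'\cap\Pi_j')$ by Lemma \ref{rtanghex}, and $\Pi_i$ is disjoint from every $\Pi_j'$; so the external edges of $C$ also miss $C'$. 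Hence $\partial C\cap C'=\emptyset$, and symmetrically $\partial C'\cap C=\emptyset$. For the second step, recall that $C$ and $C'$ lie in geodesic planes $\Pi$ and $\Pi'$, with $\partial C$ and $\partial C'$ their topological frontiers therein. If $\Pi=\Pi'$, then $C'$ is connected and disjoint from the embedded circle $\partial C$, hence lies in one complementary region; it cannot lie in the bounded region $\mathrm{int}_\Pi C\subseteq C$, for then $\emptyset\neq\partial C'\subseteq C$, so it lies in the unbounded region and $C\cap C'=\emptyset$. If $\Pi\neq\Pi'$, then $C\cap C'$ is contained in the geodesic line $\Pi\cap\Pi'$; were it nonempty, every one of its points would lie in $\mathrm{int}_\Pi C\cap\mathrm{int}_{\Pi'} C'$ (as no point of $C\cap C'$ can lie on $\partial C$ or $\partial C'$), so $C\cap C'$ would contain a nondegenerate subsegment of that line and hence be a nondegenerate closed segment whose endpoint lies on $\partial C$ or on $\partial C'$ --- impossible. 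Either way $C\cap C'=\emptyset$, the desired contradiction.

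I expect the only friction to be bookkeeping: in the disjoint-triples case one must check carefully that an internal edge of $C$ cannot be an internal edge of $C'$ (this is exactly where the disjointness of the two triples is used), and the second step of that case is elementary point-set topology of planar convex disks. All the geometric content is carried by Lemmas \ref{noboundarycross}, \ref{l1avoids111}, and \ref{rtanghex}, together with the facts that the internal edges of a $(1,1,1)$ hexagon are short cuts of length $\ell_1$ and that distinct components of $\partial\widetilde N$ are disjoint.
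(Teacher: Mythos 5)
Your proposal is correct and follows essentially the same route as the paper: split on whether $C$ and $C'$ have external edges on a common component of $\partial\widetilde N$ (handled by Lemma \ref{noboundarycross}), then in the remaining case split on whether the planes $\Pi,\Pi'$ containing $C,C'$ coincide, deriving a contradiction in each subcase from Lemmas \ref{rtanghex}, \ref{embeddedl1}, and \ref{l1avoids111}. The only difference is organizational: you first establish $\partial C\cap C'=\partial C'\cap C=\emptyset$ as a standalone step and then finish with planar point-set topology, and you spell out (via Lemma \ref{l1avoids111}) why the interior edge produced by Lemma \ref{noboundarycross} is internal to \emph{both} hexagons, points the paper leaves implicit.
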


\begin{proof}  
Suppose $C$ and $C'$ are distinct $(1,1,1)$ hexagons in
$\widetilde{N}$, and $C \cap C' \neq \emptyset$.  By Lemma
\ref{noboundarycross}, the lemma holds if there is a component of
$\partial \widetilde{N}$ containing external edges of both $C$ and
$C'$, thus we may assume that this is not the case.  It follows that
no external edge of $C$ contains a point of $C \cap C'$ and
vice--versa, since by Lemma \ref{rtanghex}, $C \cap \partial
\widetilde{N}$ is precisely the union of its external edges.  Let
$\Pi$ be the geodesic hyperplane containing $C$.  If $C' \subset \Pi$,
then $C \cap C'$ is a two-dimensional subpolyhedron of $C$, and each
vertex of $C \cap C'$ is an intersection point between internal edges,
by the above.  But these are all short cuts of length $\ell_1$,
contradicting Lemma \ref{embeddedl1}.

If $C'$ is not contained in $\Pi$, it intersects $C$ transversely in a
geodesic arc, whose endpoints are points of intersection of internal
edges of one with the other.  But such intersections violate Lemma
\ref{l1avoids111}, since the internal edges of each are short cuts of
length $\ell_1$, a contradiction.  \end{proof}

\begin{proposition}\label{111}  
Let $N$ be a compact, orientable 
hyperbolic $3$-manifold with $\partial N$
connected, totally geodesic, and of genus $2$.  Suppose that $\cosh
\ell_1 \leq 1.215$, and there is a $(1,1,1)$ hexagon in
$\widetilde{N}$.  There is a submanifold $X \subset N$ 
with $\partial N\subset X$, such that $X$ is a 
trimonic
 manifold relative to $\partial N$.   
\end{proposition}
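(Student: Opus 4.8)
The plan is to extract a defining hexagon for a trimonic manifold directly from a $(1,1,1)$ hexagon $C$ in $\widetilde{N}$, using the group of deck transformations to fold $C$ down into $N$. First I would fix a $(1,1,1)$ hexagon $C \subset \widetilde{N}$, lying in a geodesic plane $\Pi$, with internal edges three lifts $\lambda, \lambda', \lambda''$ of the (unique, by Lemma \ref{uniquel1}) shortest return path $\lambda_1$, and external edges arcs on three components $\Pi_1, \Pi_2, \Pi_3$ of $\partial\widetilde{N}$. Let $p : \widetilde{N} \to N$ be the covering projection. By Remark \ref{reallyembeddedl1}, each short cut of length $\ell_1$ embeds in $N$; moreover the three internal edges of $C$ all project to the same arc $\alpha \doteq p(\lambda_1)$, a properly embedded geodesic arc in $N$ meeting $\partial N$ orthogonally at its two endpoints. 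The idea is that $D \doteq C$, with $f \doteq p|C : C \to N$, should be (after a small adjustment) a defining hexagon for a trimonic manifold structure on a suitable submanifold $X$: conditions \ref{of steven sondheim} and \ref{and leonard bernstein} of Definition \ref{i'm now sir murgatroyd} hold because $f^{-1}(\alpha) \cap \partial C$ is exactly the three internal edges, each mapped homeomorphically onto $\alpha$; conditions \ref{in america} and \ref{for a small fee} hold because $\inter C$ lies in $\inter\widetilde{N}$ and the external edges lie in $\partial\widetilde{N}$.

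The crux is injectivity away from $f^{-1}(\alpha)$, i.e. condition \ref{everything's free}, which is exactly where Lemmas \ref{Cembedded} and \ref{embeddedboundary} (and the hypothesis $\cosh\ell_1 \le 1.215$) enter. The point $x \in \inter C$ with $p(x) = p(x')$, $x' \in \inter C$ or $x' \in (\partial C) - f^{-1}(\alpha)$, would mean there is a deck transformation $g$ with $g(C) \cap C \ne \emptyset$ but $g(C) \ne C$, and the intersection point not on a shared internal edge; Lemma \ref{Cembedded} says $g(C) \cap C$ is empty or a single internal edge of each, and Remark \ref{embeddedboundary} handles the case where external-edge interiors collide — together these force $g(C) = C$, hence (since $g$ is fixed-point-free) a contradiction. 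So $f$ restricted to $\inter C \cup ((\partial C) - f^{-1}(\alpha))$ is one-to-one. Then I would define $X$ to be a semi-regular (regular) neighborhood in $N$ of $\partial N \cup f(D)$; this is condition \ref{maybe i go back} by fiat, and gives $\partial N \subset X$. This shows $X$ is a trimonic manifold relative to $\partial N$.

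The main obstacle I anticipate is the verification of condition \ref{everything's free} in full: one must rule out not only transverse double points of $f$ but also the possibility that $f(D)$ meets $\partial N$ in more than the external edges of $C$ (which would violate "$f((\partial D) - f^{-1}(\alpha)) \subset S$" being the whole picture), and that $f$ is genuinely an embedding on the relevant open piece rather than merely generically finite-to-one. Lemma \ref{rtanghex} (giving $C \cap \partial\widetilde{N} = \cup_i (C \cap \Pi_i)$) is the tool that keeps $f(\inter D)$ inside $\inter N$, and Lemma \ref{l1avoids111} prevents a short cut of length $\ell_1$ from crossing $C$ except as an internal edge, which is what is needed when analyzing how translates $g(C)$ can meet $C$ through their internal edges. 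One small technical wrinkle is that the defining-hexagon framework is PL while $C$ is a smooth geodesic hexagon; as noted at the start of Section \ref{sec:111}, one passes to the smooth category by fiat, so it suffices to produce the configuration up to homeomorphism of pairs. Assembling these observations, $f : C \to N$ (PL-straightened) is a defining hexagon and $X$ a semi-regular neighborhood of $\partial N \cup f(C)$ is the required trimonic manifold relative to $\partial N$.
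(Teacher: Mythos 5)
Your proposal is correct and follows essentially the same route as the paper's own proof: take $D = C$, $f = p|_C$, $X$ a regular neighborhood of $\partial N \cup f(C)$, and verify the six conditions of the definition of defining hexagon using Lemma \ref{uniquel1}, Remark \ref{reallyembeddedl1}, Lemma \ref{l1avoids111}, Remark \ref{embeddedboundary}, Lemma \ref{Cembedded}, and Lemma \ref{rtanghex} in precisely the roles you assign them. The one small organizational difference is that the paper invokes Lemma \ref{l1avoids111} directly to verify condition (\ref{of steven sondheim}) (that $f^{-1}(\alpha)$ lies entirely in $\partial D$, not just that $f^{-1}(\alpha)\cap\partial D$ is the internal edges), whereas you fold it into the discussion of injectivity; but you do cite the lemma with the correct content, so the argument is the same.
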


\begin{proof}  
Let $N$ be as in the hypotheses, and fix a $(1,1,1)$ hexagon $C
\subset \widetilde{N}$.  Let $f : C \rightarrow N$ be the restriction
of the universal covering, and let $X$ be a regular neighborhood of
$\partial N \cup f(C)$.  Since $\cosh \ell_1 \leq 1.215$, Lemma
\ref{uniquel1} implies that $N$ has a unique shortest return path
$\alpha$, hence each internal edge of $C$ projects to $\alpha$ by $f$.
The preimage of $\alpha$ is a union of short cuts with length
$\ell_1$, hence Lemma \ref{l1avoids111} implies Property (1) of
Definition \ref{i'm now sir murgatroyd}.  Remark
\ref{reallyembeddedl1} now immediately implies Property (2) of the
definition.  Property (3) follows from Remark \ref{embeddedboundary}
and Lemma \ref{Cembedded}.  Properties (4) and (5) follow from Lemma
\ref{rtanghex}, and (6) holds by construction.  \end{proof}

\begin{proposition} \label{Cboundary} 
Let $N$ be a compact,  
orientable hyperbolic $3$-manifold with $\partial N$
connected, totally geodesic, and of genus $2$, such that $\cosh \ell_1
\leq 1.215$ and there is a $(1,1,1)$ hexagon in $\widetilde{N}$.  The
trimonic  submanifold $X \subset N$ supplied by Proposition
\ref{111} is non-degenerate.  \end{proposition}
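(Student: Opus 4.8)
The plan is to verify directly that, for the defining hexagon $f\colon C\to N$ produced by Proposition \ref{111} (the restriction of the universal covering to the fixed $(1,1,1)$ hexagon $C$), no component of $\partial N-\Gamma_f$ is an open disk. Write $\Gamma=\Gamma_f$. By Lemma \ref{the magic}, $\Gamma$ is a theta graph or an eyeglass graph and a regular neighborhood $Z\subset\partial N$ of $\Gamma$ is a planar surface with three boundary curves. An innermost-disk argument shows that $\partial N-\Gamma$ has an open disk component if and only if some boundary curve of $Z$ is homotopically trivial in $\partial N$; since $\partial N$ is totally geodesic, hence incompressible in $N$, this holds if and only if some boundary curve of $Z$ is homotopically trivial in $N$. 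So it suffices to show that each of the three boundary curves of $Z$ is homotopically essential.

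Each boundary curve of $Z$ is freely homotopic to a loop built by concatenating at most four of the arcs $f(E_1),f(E_2),f(E_3)$ and their reverses, where $E_1,E_2,E_3$ are the external edges of $C$ (which project to distinct arcs in $\partial N$ by Remark \ref{embeddedboundary}): in the theta case the curves are $f(E_i)*\overline{f(E_j)}$ for $i\ne j$, and in the eyeglass case they are $f(E_a)$, $f(E_c)$, and $f(E_a)*f(E_b)*f(E_c)*\overline{f(E_b)}$, where $f(E_a),f(E_c)$ are the loop-edges and $f(E_b)$ is the bridge. Lift such a loop to $\widetilde N$ starting at a lift lying on a component $\Pi$ of $\partial\widetilde N$; since each $E_i$ is a geodesic arc in a component of $\partial\widetilde N$ joining two feet of lifts of $\lambda_1$, the lift is a concatenation of at most four translates of external edges of $C$, all contained in $\Pi$, with consecutive translates sharing an endpoint, and the loop is essential exactly when this lifted path fails to close up. For the two loop-edges $f(E_a)$, $f(E_c)$ in the eyeglass case this is immediate, since $E_a$ and $E_c$ each join two \emph{distinct} feet of $\lambda_1$ on a single component of $\partial\widetilde N$. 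For the remaining curves, suppose the lifted path closes up; then it is a geodesic polygon $P\subset\Pi$ with at most four sides, each a translate $g_k(E_{i_k})$, hence an external edge of the $(1,1,1)$ hexagon $g_k(C)$, and by Lemma \ref{rtanghex} the endpoints of $g_k(E_{i_k})$ on $\Pi$ are the feet of the two interior edges of $g_k(C)$ that meet $\Pi$. Because a $(1,1,1)$ hexagon has only one external edge on $\Pi$, consecutive sides of $P$ belong to distinct translates $g_k(C)\ne g_{k+1}(C)$; by Lemma \ref{noboundarycross} two such hexagons meet in at most a single interior edge, which meets $\Pi$ in at most one point. In the theta case $P$ is a bigon whose two sides share \emph{both} of their (distinct) endpoints on $\Pi$, contradicting the previous sentence. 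In the eyeglass case $P$ is a quadrilateral, and the same reasoning applied at all four vertices, together with the fact that each translate of $C$ has exactly two interior edges meeting $\Pi$, forces the four translates to share a common interior edge, which in turn produces a nontrivial covering transformation fixing $C$ — impossible by the argument of Remark \ref{embeddedboundary}, since a covering transformation cannot fix the compact convex set $C$.

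The delicate point I expect is the eyeglass quadrilateral: one must rule out an embedded geodesic quadrilateral on $\Pi$ whose four sides are external edges of four pairwise-distinct translates of $C$ with consecutive translates sharing an interior edge, and this is exactly where the non-overlapping conclusion of Lemma \ref{noboundarycross} and the precise edge structure of a $(1,1,1)$ hexagon must be combined carefully; the theta (bigon) case is by contrast essentially immediate. Granting this, every boundary curve of $Z$ is essential in $N$, hence essential in $\partial N$ by incompressibility of $\partial N$, so $Z$ is $\pi_1$-injective in $\partial N$, no component of $\partial N-\Gamma$ is an open disk, and $X$ is a non-degenerate trimonic manifold relative to $\partial N$.
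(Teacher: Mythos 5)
The broad reformulation is sound and elegant: replacing the paper's lift of a disk component $U$ by a lift of a boundary curve of $Z$ is legitimate, and it uses the conclusion of Lemma \ref{the magic} (specifically the boundary-cycle structure from \ref{but go} and \ref{go in peace}) to cut down the a priori $n$-gon cases $3\le n\le6$ that the paper considers to just a bigon (theta case) and a monogon or quadrilateral (eyeglass case). Your handling of the bigon and the loop-edge monogons is essentially correct.

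The gap is in the quadrilateral. The claim that the four vertex-incidences ``force the four translates to share a common interior edge'' does not follow from the cited lemmas. Setting $g_0=1$, what Lemma \ref{rtanghex} and Lemma \ref{noboundarycross} give you is only the chain structure: at each of the four vertices $p_k$ the two adjacent translates $g_{k}(C)$ and $g_{k+1}(C)$ share one interior edge $e_k$, the short cut whose foot on $\Pi$ is $p_k$. Since the four vertices $p_0,\ldots,p_3$ are pairwise distinct (the two endpoints of a side are distinct, and $V\ne W$), the four short cuts $e_0,\ldots,e_3$ are pairwise distinct, and $g_k(C)$ contains exactly $\{e_{k-1},e_k\}$ among them. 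No common edge is forced; indeed $C=g_0(C)$ contains $e_3,e_0$ while $g_2(C)$ contains $e_1,e_2$, which are disjoint from these. Furthermore, even if several translates did share a short cut, that would \emph{not} produce a covering transformation fixing $C$: distinct $(1,1,1)$ hexagons can share a single interior edge (this is precisely what Lemma \ref{Cembedded} permits) without one being a nontrivial translate of the other that fixes it. So the proposed contradiction evaporates.

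This is exactly where the paper's proof becomes genuinely geometric rather than combinatorial. In the paper, the quadrilateral $\overline U_0$ is ruled out by a dihedral-angle count: using the orientation-reversing identification $\psi\colon\delta_V\to\delta_W$ and the positivity/negativity of the ordered triods $\calt_V$, $\calt_W$ from the proof of Lemma \ref{the magic}, the four angles of $\overline U_0$ are computed to be $\theta_2-\theta_1$, $\theta_1$, $2\pi-\theta_2$, and $\theta_1$, summing to $2\pi+\theta_1>2\pi$, contradicting Gauss--Bonnet for a geodesic quadrilateral in $\HH^2$. (The hexagon case is handled similarly with angle sum $4\pi$.) To complete your approach you would need to import some version of this angle computation; the combinatorics of shared short cuts alone cannot rule out an embedded geodesic quadrilateral of the type you describe.
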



\begin{proof}  
Let $N$ satisfy the hypotheses, and as in the proof of Proposition
\ref{111} let $C \subset \widetilde{N}$ be a $(1,1,1)$ hexagon, $f : C
\rightarrow N$ the restriction of the universal cover, and $\alpha
\subset N$ the shortest return path.  Below we will borrow wholesale
the constructions and notation from the proof of Lemma \ref{the
magic}, with $C$ here in the role of $D$ there and $\partial N$ in the
role of $S$.

Recall that the internal edges $a_i$ and external edges $b_i$ of $C$,
$i \in \{0,1,2\}$, are enumerated so that $a_i$ shares a vertex with
$b_i$ and $b_{i-1}$ for each $i$, and $\partial C$ is oriented so that
$\phi_0 \doteq f|a_0$ induces the same orientation on $\alpha$ as
$\phi_1 \doteq f|a_1$.  Then $\alpha$ is given the orientation induced
by $\phi_0$ and $\phi_1$, with initial and terminal vertices $V$ and
$W$, respectively.  For each $i \in \{0,1,2\}$, the edge $\beta_i =
f(b_i)$ of $\Gamma = \Gamma_f$ 
is given the induced orientation from $b_i$.

Cases (I) and (II) in the proof of Lemma \ref{the magic} are
distinguished according to whether $\phi_2 \doteq f|a_2$ is
orientation preserving or reversing, respectively.  In Case (I),
$\Gamma = \overline{\beta_0 \cup \beta_1 \cup \beta_2}$ is a theta
graph, and in Case (II) it is an eyeglass.  (Recall that $\beta_i =
f(\mathrm{int}\ b_i)$, $i \in \{0,1,2\}$.)

For each $i \in \{0,1,2\}$, let $\Pi_i$ be the component of $\partial \widetilde{N}$ containing $b_i$.  
Suppose that $U$ is a component of $\partial N - \Gamma$ which is homeomorphic to an open disk, and let $U_0 \subset \Pi_0$ be a component of the preimage of $U$ under the universal covering map.  Then $U_0$ is projected homeomorphically to $U$, and $\overline{U}_0$ is a compact polygon in $\Pi_0$ with edges projecting to the $\beta_i$, hence covering translates of the $b_i$.  Since the $b_i$ are geodesic arcs, $\overline{U}_0$ has at least three edges.  Since each vertex $v$ of $\overline{U}_0$ projects to $V$ or $W$, and $U_0$ projects homeomorphically, $\overline{U}_0$ has at most 6 vertices.  Hence $\overline{U}_0$ is an $n$--gon for $n$ between $3$ and $6$.

Suppose edges $b$ and $b'$ incident to a single vertex $v$ of
$\overline{U}_0$ were identified in $\Gamma$.  The covering
transformation $f$ taking $b$ to $b'$ is orientation preserving on
$\widetilde{N}$, so it preserves the boundary orientation on $\Pi_0$.
Give $U_0$ this orientation, and orient $b$ and $b'$ as arcs in the
boundary of $U_0$.  Since $f(U_0)$ does not intersect $U_0$,
$f(\overline{U}_0) \cap \overline{U}_0 = b' = f(b)$.  Since $f(b)$ has
the boundary orientation from $f(U_0)$, its orientation is opposite
that of $b'$.  But then since $v$ is the initial vertex of (say) $b$
and the terminal vertex of $b'$, $f(v) = v$, a contradiction.  Hence:

\Claim  \label{Unidentified edges} If $v$ is a vertex of $\overline{U}_0$, 
the edges incident to $v$ project to distinct edges of $\Gamma$.  \EndClaim

Now suppose $\overline{U}_0$ is a triangle.  Then two of its vertices
are identified in $N$, since $\Gamma$ has only two vertices.  The edge
joining these vertices projects to an edge joining $V$ to $V$ or $W$
to $W$, so in this case $\Gamma$ is an eyeglass graph.  On the other
hand, the final vertex of $\overline{U}_0$ is not identified with the
other two by \ref{Unidentified edges}, since an eyeglass graph has
only one edge joining each vertex to itself.  But then the two edges
emanating from the final vertex yield distinct edges of $\Gamma$,
joining $V$ to $W$, which does not occur in an eyeglass graph.  This
is a contradiction.

When $\overline{U}_0$ is a pentagon, three of its vertices are
identified to $V$ (say) in $\Gamma$.  Thus two of these are adjacent
in $\overline{U}_0$.  The edge joining the adjacent vertices of
$\overline{U}_0$ identified to $V$ joins it to itself in $\Gamma$,
hence $\Gamma$ is an eyeglass graph.  The third vertex identified to
$V$ is not adjacent to either of the others, by \ref{Unidentified
edges}, since $\Gamma$ has only one edge joining $V$ to itself.  Then
the edges adjacent to this vertex project to distinct $\beta_i$
joining $V$ to $W$, a contradiction.

To rule out the possibility that $\overline{U}_0$ is a quadrilateral
or hexagon requires counting angles.  Recall that $S = \partial N$ is
oriented so that $\calt_V$ is a positive oriented triod (see
\ref{troglodyte}).  Here $\calt_V = (\sigma_0,\sigma_1,\sigma_2)$ in
Case (I) and $\calt_V = (\sigma_0,\sigma_1,\rho_2)$ in Case (II),
where for each $i \in \{0,1,2\}$, $\rho_i$ is the closure of
an initial segment of
$\beta_i$ and $\sigma_i$ is the closure of
a terminal segment of $\beta_{i-1}$.
Define $\theta_1$ to be the angle measure from $\sigma_0$ to
$\sigma_1$ at $V$, in the direction prescribed by the orientation on
$\partial N$.  In Case (I), we take $\theta_2$ to be the angle from
$\sigma_0$ to $\sigma_2$ in the orientation direction, and in Case
(II) we let $\theta_2$ be the angle from $\sigma_0$ to $\rho_2$.  Then
$0 < \theta_1 < \theta_2 < 2\pi$.

Recall that $C$ is a totally geodesic hexagon in $\widetilde{N}$ by
Lemma \ref{geodhex}, and the covering projection $f$ immerses $C$ 
in $N$ isometrically.  
Then appealing to Figure \ref{hexnT}, we note that $\theta_1$ is the
angle from $\rho_1$ to $\rho_0$ at $W$, measured in the orientation
direction.  This is because the homeomorphism $\psi:\delta_V
\rightarrow \delta_W$ defined in the proof of Lemma \ref{the magic} is
orientation reversing.  Similarly, in Case (I) $\theta_2$ is the the
angle from $\rho_2$ to $\rho_0$ at $W$ in the orientation direction,
and in Case (II) it is the angle from $\sigma_2$ to $\rho_0$.

If $\overline{U}_0$ is a quadrilateral, then two of its edges are sent by $f$ 
to the same edge of $\Gamma$.  These are opposite, by \ref{Unidentified
edges}; hence the image in $\Gamma$ of each of the remaining edges 
joins a vertex to itself.  Then $\Gamma$ is an eyeglass graph, the pair of 
opposite edges identified by $f$ project to $\beta_0$, and the other
two edges project to $\beta_1$ and $\beta_2$.  Abusing notation slightly,
we label the edges projecting to $\beta_0$ by $b_0$ and $b_0'$,
and similarly label the edge projecting to $\beta_i$ by $b_i$, $i = 1,2$.
(These are covering translates of the corresponding edges of $C$.)  
We give each edge of $\overline{U}_0$ an orientation matching that
of its correspondent in $\Gamma$.

Orient $U_0$ so that $f|U_0$ preserves orientation.  We may assume, 
by switching the labels of $b_0$ and $b_0'$ and/or replacing $C$ 
with a covering translate if necessary, that the prescribed orientation
on  $b_0 \subset C$ matches the boundary orientation which it inherits 
from $\overline{U}_0$.  The terminal endpoint of $b_0$ is sent
to $V$, and a terminal segment is sent to the interior of $\sigma_1$
by definition.  Since the orientation on $\partial N$
is chosen so that $(\sigma_0,\sigma_1,\rho_2)$ is a positively
oriented triod, the orientation $\sigma_1$ inherits from $\beta_0$
matches the boundary orientation from the component of $\delta_V -
(\sigma_0 \cup \sigma_1 \cup \sigma_2)$ bounded by it and $\rho_2$.
It follows that the terminal endpoint of $b_0$ is the initial endpoint
of $b_2$, and the dihedral angle of $\overline{U}_0$ at this vertex is
$\theta_2 - \theta_1$.

Since the orientation on $b_0'$ is opposite that induced by
$\overline{U}_0$, the dihedral angle of $\overline{U}_0$ at the common
terminal endpoint of $b_0'$ and $b_2$ is $\theta_1$, the dihedral
angle in $\delta_V$ between $\sigma_0$ and $\sigma_1$.  Arguing as
above, we find that the dihedral angle of $\overline{U}_0$ at the
initial endpoint of $b_0$, which is the terminal endpoint of $b_1$, is
$2\pi - \theta_2$.  (Recall that the homeomorphism $\psi: \delta_V
\rightarrow \delta_W$ visible in Figure \ref{hexnT} as projection
upward is orientation reversing.)  The dihedral angle at the final
vertex is then $\theta_1$.  Thus the sum of the dihedral angles is
$2\pi + \theta_1$, contradicting the well known fact that the dihedral
angle sum of a hyperbolic quadrilateral is less than $2\pi$.

Now suppose $\overline{U}_0$ is a hexagon.  Then the projection of
$\overline{U}_0$ contains each of $\delta_V$ and $\delta_W$, since
$\overline{U}_0$ has $6$ vertices and each of $V$ and $W$ has valence
$3$.  Thus the sum of the dihedral angles around vertices of
$\overline{U}_0$ is $4\pi$.  But a hyperbolic hexagon has dihedral
angle sum less than $4\pi$, a contradiction.  It follows that no
component of $\partial N - \Gamma$ is homeomorphic to an open disk.
Since $f : C \rightarrow N$ is a defining hexagon (in the sense of
Definition \ref{non-degenerate}) for the submanifold $X$ defined in 
Proposition \ref{111}, and since $\Gamma=\Gamma_f$, the
trimonic manifold $X$ is non-degenerate.  \EndProof


\section{Putting it all together}  \label{sec:closed}

In this section we prove the theorems stated in the introduction.  
Here we make much use of terminology and results from \cite{CDS}.  Of
particular importance is the term ``$(g,h)$-small'', see Definition
1.2 there.

\begin{lemma}  \label{I-bundle genus two boundary}  
Let $\calw$ be a normal book of $I$--bundles, 
set $W = |\calw|$,
and suppose that $\partial W$ is connected and has genus $2$.
Then $\mathrm{Hg}(W) = 3$.  \end{lemma}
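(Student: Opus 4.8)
The plan is to prove $\Hg(W)\ge 3$ and $\Hg(W)\le 3$ separately, first using the hypothesis on $\partial W$ to determine the topology of $W$. For the lower bound, recall that a normal book of $I$-bundles has simple underlying manifold; in particular $W$ is irreducible and boundary-irreducible, hence not a handlebody. Suppose $\Hg(W)=g$ and choose a genus-$g$ Heegaard splitting $W=V\cup_{\Sigma}C$. Since $\partial W$ is connected, one side, say $V$, is a compression body with $\partial_-V=\partial W$ and $\partial_+V=\Sigma$, and $C$ is a handlebody of genus $g$. Writing $V$ as $\partial W\times I$ with $1$-handles attached, and using that $\partial W$ is connected of genus $2$, the genus of $\Sigma$ is $2$ plus the number of $1$-handles; so $g\ge 2$, with equality only if $V=\partial W\times I$, in which case $W\cong C$ is a genus-$2$ handlebody --- a contradiction. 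Thus $\Hg(W)\ge 3$.

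Next I would pin down $W$. The boundary $\partial W$ is the union of $\partial_h\calp_\calw$ with the annuli $\calb_\calw\cap\partial W$, glued along circles, so $\chi(\partial W)=\chi(\partial_h\calp_\calw)$. For a page $P$ with base surface $B_P$ one has $\chi(\partial_hP)=2\chi(B_P)$, and in a normal book of $I$-bundles every base surface has negative Euler characteristic (cf.\ the proof of Proposition \ref{all god's chillun got guts}). As $\chi(\partial W)=-2$, this forces $\calw$ to have exactly one page $P$, with $\chi(B_P)=-1$, and $B_P$ is not closed (otherwise $W$ would be an $I$-bundle over a closed surface, hence not a normal book of $I$-bundles). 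Thus $P$ is a compact orientable irreducible $3$-manifold with free rank-$2$ fundamental group and nonempty boundary, i.e.\ a genus-$2$ handlebody, and $W$ is obtained from $P$ by attaching the binding solid tori along the vertical annuli $\cala_\calw=\partial_vP$. Expressing $\pi_1(W)$ as an iterated amalgamated free product of $\pi_1(P)$ with the (infinite cyclic) fundamental groups of the bindings, amalgamated over the cyclic subgroups carried by the components of $\cala_\calw$, one checks that unless there is a single binding attached along a single annulus, $\pi_1(W)$ is either free --- so $W$ is a handlebody, contradicting simplicity --- or contains $\ZZ\oplus\ZZ$, namely a central $\ZZ$ together with a $\ZZ$ inside an infinite triangle group or an infinite free product of finite cyclic groups, again contradicting simplicity. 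Hence $W=P\cup_AB$, where $B$ is a single solid torus glued to the genus-$2$ handlebody $P$ along a single annulus $A\subset\partial P$ and $B_P$ has a single boundary circle.

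For the upper bound I would produce an explicit genus-$3$ Heegaard splitting of $W=P\cup_AB$. One approach is to build $W$ from the collar $\partial W\times I$ by attaching a single $1$-handle (routed through $B$) followed only by handles of index $2$ and $3$; by \cite[\easyHeegaard]{CDS} the index-$2$ and index-$3$ handles leave the Heegaard genus unchanged while the one $1$-handle raises it by at most $1$, giving $\Hg(W)\le 2+1=3$, and the numbers of index-$2$ and index-$3$ handles are then determined by $\chi(W)=-1$. (Alternatively one may invoke the standard estimate that gluing two compact $3$-manifolds along an annulus adds their Heegaard genera, which here gives $\Hg(W)\le\Hg(P)+\Hg(B)=2+1=3$.) Together with the lower bound this yields $\Hg(W)=3$.

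The step I expect to be the main obstacle is the structural reduction just above: ruling out more than one binding, or a binding meeting more than one component of $\cala_\calw$. This requires the amalgamated-free-product computation together with an invocation of simplicity (no rank-$2$ free abelian subgroup of $\pi_1$) in each of the finitely many combinatorial configurations allowed by the single-page constraint. Once $W$ is reduced to a genus-$2$ handlebody with one solid torus glued along one annulus, the Heegaard-genus estimate is routine.
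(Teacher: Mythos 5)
Your overall plan (prove $\Hg(W)\ge 3$ and $\Hg(W)\le 3$ separately) is reasonable, and the lower-bound half is correct: since $W$ is simple it is boundary-irreducible, hence not a handlebody, and a genus-$2$ splitting of a manifold with connected genus-$2$ boundary would force $W$ to be a handlebody. (The paper leaves this half implicit.) The trouble is in your ``structural reduction'' and the upper bound that depends on it.

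Your key intermediate claim --- that $W$ must be a genus-$2$ handlebody $P$ with a \emph{single} binding solid torus attached along a \emph{single} annulus --- is not established. You yourself flag this as the likely obstacle, and indeed the amalgamated-free-product argument you sketch does not cover the actual possibilities. First, the general definition of a book of $I$-bundles (and the paper's own phrasing ``there is a unique page of $\calw$ which \emph{has negative Euler characteristic}'') allows pages of Euler characteristic $0$, which your count $\chi(\partial W)=\chi(\partial_h\calp_\calw)=2\chi(B_P)$ silently ignores; such pages are solid tori and sit in $\overline{W-P}$ alongside the bindings. Second, even with a single page $P$, a binding may be attached to $P$ along two or three components of $\partial_v P$ (e.g.\ when the base is a pair of pants), and ruling these out ``by an amalgamated-free-product computation plus simplicity'' is precisely the nontrivial content you would need to supply; the computation genuinely depends on the winding numbers and the combinatorics of which annuli go to which binding, and it is not clear that every such configuration produces a free or non-simple $\pi_1$. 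Third, the parenthetical fallback --- ``gluing two compact $3$-manifolds along an annulus adds their Heegaard genera'' --- is not a standard estimate, and in the form you need it (with the compression-body side forced to have $\partial_-=\partial W$) it does not follow from ordinary amalgamation of Heegaard splittings, which is formulated for closed incompressible gluing surfaces.

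The paper avoids all of this. It shows only that there is a unique page $P$ with $\chibar(P)=1$, and then uses the fact that each component $B$ of $\overline{W-P}$ has torus boundary and is $\pi_1$-injective, so that simplicity of $W$ makes $B$ a $(2,2)$-small manifold; by \cite[Proposition 2.3]{CDS} every such $B$ is a solid torus, regardless of how many there are or how many annuli they meet. With that in hand, the paper builds the genus-$3$ Heegaard surface by cutting the base surface of $P$ along two explicitly chosen arcs and verifying directly that the pieces assemble into a compression body $V_0=\caln_0\cup\calh$ and a handlebody $V_1=\overline{W-V_0}$. If you want to salvage your approach, replace the structural reduction by the paper's ``$\overline{W-P}$ is a union of solid tori'' step, and then actually produce the relative handle decomposition of $W$ on $\partial W\times I$ with one $1$-handle --- that last step is exactly the Morse-theoretic work the paper carries out, and it is not something you can defer to a citation.
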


\begin{proof}  
Since $\chibar(W) = \frac{1}{2}\chibar(\partial W) = 1$, there 
is a unique page $P$ of $\calw$ which has negative Euler
characteristic; and furthermore, $\chibar(P) = 1$.   
Since $P \cap \partial W =
\partial_h P$ is a $\pi_1$-injective subsurface of 
$\partial W$ with Euler characteristic $-2$, its complement in
$\partial W$ is a disjoint union of annuli.  Thus if $B$ is a
component of $\overline{W-P}$, then $\partial B$ is a union
of annuli in $\overline{\partial W - P}$ and vertical annuli in the
frontier of $P$, and is therefore a torus.  Since the
frontier of $B$ in $W$ consists of essential annuli, $B$ is
$\pi_1$-injective in $W$, and since $W$ is simple, $B$ is
$(2,2)$-small.  It now follows from \cite[Proposition
2.3]{CDS} that each component of $\overline{W-P}$ is a solid torus.

Let $C$ be a closed disk contained in the interior of $T$.  If $p: P
\rightarrow T$ is the bundle projection, $\calh = p^{-1}(C)$ is a
1-handle in $P$ joining $\partial_h P \subset \partial W$ to itself.
Let $\delta_0$ be an arc embedded in $\overline{T-C}$, so that
$\partial \delta_0 = \delta_0 \cap \partial (\overline{T-C}) \subset
\partial C$ and no arc of $\partial C$ bounds a disk embedded in
$\overline{T-C}$ together with $\delta_0$.  The existence of such an
arc can be established using the fact that $\chi(\overline{T-C}) = -2$
and standard Morse theory arguments.  If $D_0$ is a regular
neighborhood of $\delta_0$ in $\overline{T-C}$, then
$\overline{T-C-D_0}$ is a possibly disconnected surface with Euler
characteristic $-1$ and no component which is a disk.  Let $T_0$ be
the component with $\chi(T_0) = -1$, and let $\beta$ be a component of
$\partial T_0$ containing an arc of the frontier in $\overline{T-C}$
of $D_0$.  There is an arc $\delta_1$ embedded in $T_0$, so that
$\partial \delta_1 = \delta_1 \cap \partial T_0 \subset \beta$ and no
arc of $\beta$ bounds a disk in $T_0$ together with $\delta_1$.  This
follows as above, and we may further assume, after sliding $\partial
\delta_1$ along $\beta$ if necessary, that $\partial \delta_1$ does
not intersect the frontier of $D_0$.  Let $D_1$ be a regular
neighborhood in $T_0$ of $\delta_1$ which does not intersect $D_0$,
and let $D = D_0 \sqcup D_1 \subset \overline{T-C}$.

By construction, $\overline{T-C-D}$ is a disjoint union of surfaces
with Euler characteristic $0$; that is, annuli and M\"obius bands.
Since $T$ is connected, each component of $\overline{T-C-D}$ has at
least one component of its boundary containing arcs of the frontier of
$D$.  Thus if $\alpha$ is a component of $\partial T$, the component
$T'$ of $\overline{T-C-D}$ containing $\alpha$ is an annulus, and
$\alpha$ is the unique component of $\partial T'$ contained in
$\partial T$.  For $i = 0$ or $1$, let $\cald_i = p^{-1}(D_i) \subset
P$, and let $\cald = \cald_0 \sqcup \cald_1$.  Each of $\cald_0$ and
$\cald_1$ is an $I$-bundle over a disk, hence a ball, and each
component of $\overline{P-\calh - \cald}$ is an $I$-bundle over an
annulus or M\"obius band, hence is a solid torus.

Let $B$ be a component of $\overline{W - P}$. Then the component of
$\overline{W -\calh - \cald}$ containing $B$ is the union of $B$ with
a collection of solid torus components of $\overline{P-\calh -
\cald}$.  If $B_1$ is such a component, by the above $B_1$ is an
$I$-bundle over an annulus component of $\overline{T-C-D}$ with a
unique boundary component $\alpha \subset \partial T$.  Let $A =
p^{-1}(\alpha) = B \cap B_1$, a vertical annulus in the frontier of
$P$.  Since $A$ is a degree one annulus in $\partial B_1$ it follows
that $B \cup B_1$ is a solid torus.  It follows that each component of
$\overline{W - \calh - \cald}$ is a solid torus, so $\overline{W -
\calh}$ is obtained from a collection of solid tori by adding
$\cald_0$ and $\cald_1$.

Let $\caln_0$ be a regular neighborhood of $\partial W$ such that
$\caln_0 \cap P$ is a regular neighborhood of $\partial_h P$ in $T$
with horizontal frontier.  Then $\caln_0 \cap \calh$ is a disjoint
union of two solid cylinders.  Let $V_0 = \caln_0 \cup \calh$.  $V_0$
is a compression body in $W$ with frontier a surface $S$ of genus 3.
Our description above shows that $V_1 = \overline{W-V_0}$ is the union
of a collection of solid tori with $\cald_0 \cap V_1$ and $\cald_1
\cap V_1$.  Each of these has the structure of a 1-handle, since it is
a ball and its intersection with its complement in $V_1$ consists of
two disks.  Hence $V_1$ is a handlebody and $S$ is a Heegaard surface
for $W$.  \end{proof}

\begin{theorem} \label{hg4orvol7.32} \hgfourorvolseventhreetwo \end{theorem}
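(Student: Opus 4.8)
The plan is to implement the dichotomy described in the introduction: when $\cosh\ell_1\le 1.215$ and $\widetilde N$ contains a $(1,1,1)$ hexagon, $N$ contains a non-degenerate trimonic submanifold $X$, and the failure of $X$ to be a book of $I$-bundles — sharpened, when $\Hg(N)\ge 5$, by information about the complementary piece — feeds a volume estimate through the guts machinery and the work of Agol–Storm–Thurston.

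First I would apply Propositions \ref{111} and \ref{Cboundary} to obtain a submanifold $X\subset N$ with $\partial N\subset X$ that is a non-degenerate trimonic manifold relative to $\partial N$. By Lemma \ref{one last kiss}, $\partial X$ has exactly one component $T$ besides $\partial N$; it has genus $2$ and is $\pi_1$-injective in $X$, and $\partial N$ is incompressible in $X$ as well, being totally geodesic in $N$. By Lemma \ref{as steward}, $\Hg(X)\le 1+\genus(\partial N)=3$. Because $N$ is hyperbolic, $X$ is irreducible, boundary-irreducible and atoroidal; as its boundary consists of two genus-$2$ surfaces it is neither an $I$-bundle over a closed surface nor a Seifert fibered space, so $X$ is simple. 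By Lemma \ref{it can't happen here}, $X$ is not $|\calw|$ for any normal book of $I$-bundles $\calw$, so the contrapositive of Proposition \ref{all god's chillun got guts} gives $\chi(\overline{X-\Sigma_X})<0$, where $\Sigma_X$ denotes the characteristic submanifold of $(X,\partial X)$; that is, the guts of $X$ has negative Euler characteristic.

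Next I would study $N_1\doteq\overline{N-X}$. Since $\partial N\subset X$, the only boundary component of $N_1$ is $T$, and $N_1$ is connected because $T$ is. One checks that $T$ is incompressible in $N_1$ — equivalently in $N$, since $T$ separates $N$ and is already incompressible in $X$ — and then, arguing as for $X$, that $N_1$ is simple. Now assume $\Hg(N)\ge 5$. The standard additivity of Heegaard genus under gluing along the connected surface $T$ gives $\Hg(N)\le\Hg(X)+\Hg(N_1)-\genus(T)$, so $\Hg(N_1)\ge\Hg(N)-\Hg(X)+2\ge 4$. By Lemma \ref{I-bundle genus two boundary}, every normal book of $I$-bundles with connected genus-$2$ boundary has Heegaard genus exactly $3$; hence $N_1$ is not such a book, and a second application of Proposition \ref{all god's chillun got guts} shows that the guts of $N_1$, too, has negative Euler characteristic.

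Finally I would apply the volume estimates of Agol–Storm–Thurston \cite{ASTD}, in the form used in \cite{CDS}, to the hyperbolic manifold $N$ and the incompressible surface $T$. Cutting $N$ along $T$ yields the two simple pieces $X$ and $N_1$, each with guts of negative Euler characteristic, while $X$ carries the genus-$2$ totally geodesic boundary $\partial N$, so that the Kojima–Miyamoto lower bound is available for the corresponding contribution. Assembling these lower bounds gives $\mathrm{vol}(N)>7.32$; since the conclusion of the theorem is trivially true when $\Hg(N)\le 4$, this completes the proof. The step I expect to be the main obstacle is precisely this numerical estimate — correctly combining the guts contributions of $X$ and $N_1$ with the geodesic-boundary contribution and verifying that the total exceeds $7.32$ (the bound is certainly not sharp) — together with the not-quite-formal verification that $T$ is incompressible in $N_1$ and that $N_1$ is simple.
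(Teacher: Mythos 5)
Your overall strategy is sound and in places more elegant than the paper's: rather than running the paper's three-case analysis on the complementary piece $V=\overline{N-X}$ (boundary-reducible; a normal book of $I$-bundles; or neither), you assume $\Hg(N)\ge 5$ at the outset and use the amalgamation bound $\Hg(N)\le\Hg(X)+\Hg(V)-\genus(T)$ to force $\Hg(V)\ge 4$, which together with Lemma \ref{I-bundle genus two boundary} rules out the book-of-$I$-bundles case by contrapositive. The paper instead uses Lemma \ref{I-bundle genus two boundary} in the forward direction to produce a genus-$4$ Heegaard splitting in that case. Both routes are valid, and your version localizes the Heegaard-genus hypothesis nicely. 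Your final worry is misplaced, however: once both pieces are simple and are not books of $I$-bundles, Proposition \ref{all god's chillun got guts} gives $\kish(X)\ne\emptyset\ne\kish(V)$, hence $\chibar(\kish(N\cut T))\ge 2$, and \cite[Theorem 9.1]{ASTD} applied to the closed incompressible surface $T$ already yields $\mathrm{vol}(N)>7.32$; no separate Kojima--Miyamoto term for the geodesic boundary needs to be blended in.

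There is, though, a genuine gap. You assert that $T$ is incompressible in $N_1=V$ ``since $T$ separates $N$ and is already incompressible in $X$,'' but incompressibility on the $X$ side says nothing about the $V$ side, and indeed the paper devotes its first case precisely to the possibility that $V$ is boundary-reducible. Without this, you cannot conclude that $T$ is incompressible in $N$, nor that $V$ (or even $X$) is simple, and Proposition \ref{all god's chillun got guts} then does not apply. The repair is available in your own argument, but you must reorder it: first invoke $\Hg(N)\ge 5$ and the amalgamation bound (which needs no incompressibility) to get $\Hg(V)\ge 4$; then observe that if $V$ were boundary-reducible it would, by the $(2,2)$-smallness of $N$ and \cite[Proposition 2.3]{CDS}, be a genus-$2$ handlebody and hence have $\Hg(V)=2$, a contradiction. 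Only after this may you conclude $T$ is incompressible in $N$, that both $X$ and $V$ are simple, and proceed with the guts estimate. As written, the simplicity of $X$ (and with it the applicability of Lemma \ref{it can't happen here} and Proposition \ref{all god's chillun got guts}) is also asserted before the incompressibility of $T$ in $N$ is secured, so that claim needs to move to after the repair as well.
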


\begin{proof}  
Let $N$ satisfy the hypotheses of the theorem, and let $X \subset N$
be the codimension-0 submanifold supplied by Proposition \ref{111},
which is a trimonic  manifold with respect to $\partial N$,
nondegenerate by Proposition \ref{Cboundary}.  Let $T$ be the frontier
of $X$ in $N$.  By Lemma \ref{one last kiss}, $T$ is a surface of
genus $2$ which is $\pi_1$-injective in $X$.

Let $V = \overline{N - X}$.  Then $V$ is a compact,
connected, irreducible 3-dimensional submanifold of $N$ which is
$\pi_1$-injective, with $\partial V = T$.  Therefore $\chibar(V) = 1$.
Note that $N$ is $(2,2)$-small, since it admits a hyperbolic structure
with geodesic boundary.  Thus in the case where $V$ is
boundary-reducible, it is a handlebody by
\cite[Proposition 2.3]{CDS}.  By Lemma \ref{as steward}, $X$ has
Heegaard genus equal to $3$. Then in this case, a genus 3 Heegaard
surface for $X$ is a genus 3 Heegaard surface for $N$ (cf. \cite[Lemma
2.1]{CDS}), hence $\Hg(N) = 3$.

Now  consider the case in which $V = |\calw|$ for some
normal book of $I$--bundles $\calw$. 
By Lemma \ref{I-bundle genus two
boundary}, we have $\Hg(V) =3$.  Amalgamating the Heegaard splittings of $V$
and $X$, each of genus 3, across $T$ yields a Heegaard splitting of
$N$ with genus 4 (cf. \cite{Schul1}, Remark 2.7 and the definition
above it).

There remains the case in which $V$ is boundary-irreducible but is not
homeomorphic to $|\calw|$ for any book of $I$-bundles $\calw$. Since
$V$ is boundary-irreducible and $T$ is $\pi_1$-injective in $X$, the
surface $T$ is incompressible in $N$. Hence $V$ and $X$ are simple. By Lemma
\ref{it can't happen here}, $X$
is also not homeomorphic to $|\calw|$ for any book of $I$-bundles
$\calw$.  Hence by Proposition \ref{all god's
 chillun got guts}, if
$\Sigma_V$ and $\Sigma_X$ denote the characteristic submanifolds of
$V$ and $X$ relative to their boundaries, we have
$\chi(\overline{X-\Sigma_X})<0$ and
$\chi(\overline{V-\Sigma_V})<0$. According to \cite[Definition
1.1]{CDS}, $\kish(V)$ (or $\kish(X)$) is the union of all components
of $\overline{V-\Sigma_V}$ (or respectively
$\overline{X-\Sigma_X}$) having negative Euler characteristic. We
therefore have $\kish V\ne\emptyset$ and $\kish X\ne\emptyset$, so that
$\chibar(\overline{X-\Sigma_X})\geq 1$ and
$\chibar(\overline{V-\Sigma_V})\ge1$. Hence
$\chibar(\kish N \cut T) =\chibar(\kish(X))+\chibar(\kish(V))
\geq 2$, and by
\cite[Theorem 9.1]{ASTD}, the volume of $N$ is greater than $7.32$.
\end{proof}

\newtheorem*{vol6.89Thm}{Theorem \ref{vol6.89}}
\begin{vol6.89Thm} \volsixeightnine
\end{vol6.89Thm}

\begin{proof}  
Let $N$ satisfy the hypotheses of the theorem, and let $\ell_1$ be the
length of the shortest return path of $N$.  If $\cosh \ell_1 \geq
1.215$, then by Proposition \ref{l1cosh1.215}, $N$ has volume greater
than $6.89$.  If $\widetilde{N}$ contains no $(1,1,1)$ hexagon, then
by Proposition \ref{no(1,1,1)}, $\ell_1 \geq 1.215$, and Proposition
\ref{l1cosh1.215} again gives the desired volume bound.  We thus
suppose that $N$ has a $(1,1,1)$ hexagon and $\cosh \ell_1 < 1.215$.
But in this case Theorem \ref{hg4orvol7.32} gives a better volume
bound of $7.32$, since by hypothesis $\Hg(N) \geq 5$.  \end{proof}

\newtheorem*{genus2or3Thm}{Theorem \ref{genus2or3}}
\begin{genus2or3Thm}  \genustwoorthree
\end{genus2or3Thm}

\begin{proof}  
Let $M$ satisfy the hypotheses of Theorem \ref{genus2or3}, and note
that since $M$ is simple, it is $(2,2)$-small by definition.  Suppose
$M$ contains a connected closed incompressible surface of genus 2.  If
$M$ is $(3,2)$-small, the hypothesis on $\Hg(M)$ and
\cite[\CDSnonsep]{CDS} imply that for any such surface $S$,
$\chibar(\kish(M\cut S)) \geq 2$, satisfying the first conclusion of
Theorem \ref{genus2or3}.  Otherwise, \cite[\CDStrichotomy]{CDS}
provides a separating, connected, closed incompressible surface $S$ of
genus $2$ satisfying one of the conditions below.  
\begin{enumerate}
\item  At least one component of $M \cut S$ is acylindrical; or
\item  For each component $B$ of $M \cut S$ we have $\kish(B) \neq
\emptyset$.
\end{enumerate}
If $S$ satisfies condition $(2)$, then since each component $B$ of $M
\cut S$ has $\kish B \neq \emptyset$ we have $\chibar(\kish(M\cut S))
\geq 2$, which implies the first conclusion of Theorem
\ref{genus2or3}.  We address the other case below.

Now suppose that $M$ contains no connected closed incompressible
surface of genus 2 but contains a connected closed incompressible
surface of genus $3$.  If $M$ is $(5,3)$-small, the hypothesis on
$\Hg(M)$ and \cite[\CDSnonsep]{CDS} imply that for any such surface
$S$, $\chibar(\kish(M\cut S)) \geq 4$, satisfying the first conclusion
of Theorem \ref{genus2or3}.  The remaining possibilities are that $M$
contains a separating incompressible surface of genus $g$ and is
$(g,3)$-small, for $g = 3$ or $4$.  In either case, the hypothesis on
Heegaard genus ensures that \cite[\CDStrichotomy]{CDS} provides a
separating connected closed incompressible surface $S$ of genus $g$
satisfying condition $(1)$ or $(2)$ above.  As above, if $S$ satisfies
conclusion $(2)$, then the first conclusion of Theorem \ref{genus2or3}
follows.

In the remaining cases, we have a separating, connected, closed
incompressible surface $S \subset M$ of genus $2$, $3$, or $4$,
satisfying condition $(1)$ above, and we may assume that $S$ does not
satisfy condition $(2)$ there.  Let $N$ be an acylindrical component
of $M \cut S$ and $B$ the remaining component.  Since $N$ is
acylindrical, $N=\kish(N)$.  Therefore $\kish(B) = \emptyset$, since
otherwise $S$ would satisfy condition $(2)$ above.  Then $B = |\calb|$
for some book of $I$-bundles $\calb$ (cf. \cite[\S 5.1]{CDS}), and so
by \cite[Lemma 5.3]{CDS}, $B$ is ``shallow relative to $S$''
(\cite[Definition 4.3]{CDS}).  It now follows from \cite[Lemma
4.4]{CDS} that $\Hg(M) \leq 1 + \Hg(N)$, hence by hypothesis that
$\Hg(N) \geq 7$.  Thus in this case the second conclusion of Theorem
\ref{genus2or3} holds.  \end{proof}

\newtheorem*{closedvol6.89Thm}{Theorem \ref{closedvol6.89}}
\begin{closedvol6.89Thm} \closedvolsixeightnine
\end{closedvol6.89Thm}

\begin{proof}  
We apply Theorem \ref{genus2or3} to $M$, yielding a connected closed
surface $S$ of genus at most $4$ satisfying its conclusion.  If
$\chibar(\kish(M\cut S)) \geq 2$, then Theorem 9.1 of \cite{ASTD}
implies that the volume of $M$ is greater than $7.32$.  Thus we assume
$S$ is separating and $M \cut S$ has an acylindrical component $X$
with $\Hg(X) \geq 7$.  It is a standard result (cf. \cite[Proposition
6.3]{CDS}) that $X$ is homeomorphic to a hyperbolic $3$-manifold $N$
with totally geodesic boundary, and $\mathrm{vol}(N) =
\mathrm{geodvol}(X)$ (see Definition 6.2 of \cite{CDS}).

If $S$ has genus at least 3, then by Miyamoto's Theorem \cite[Theorem
5.4]{Miy}, $\mathrm{vol}(N) > 10.4$.  If $S$ has genus 2, then Theorem
\ref{vol6.89} implies that $N$ has volume greater than $6.89$.
Theorem \ref{closedvol6.89} now follows from \cite[Proposition
6.4]{CDS} (which is in turn derived from results in \cite{ASTD}).
\end{proof}

\newtheorem*{vol3.44Thm}{Theorem \ref{vol3.44}}
\begin{vol3.44Thm} \volthreefourfour
\end{vol3.44Thm}

\begin{proof}  
If $\pi_1 M$ is $4$--free, then Theorem 1.2 of \cite{CS_vol3.44}
implies that $M$ has volume greater than $3.44$.  Otherwise there is a
subgroup $G$ of $ \pi_1 M$ which has rank at most $4$ and
is not free.  The
homological hypotheses and Proposition 3.5 of \cite{CS_onecusp} ensure
that there is a twofold cover $\widetilde{M} \rightarrow M$, with $
\mathrm{dim}_{\mathbb{Z}_2} H_1(\widetilde{M}; \mathbb{Z}_2) \geq 8,$
such that $G < \pi_1 \widetilde{M}$.  Theorem 1.1 of \cite{CS_vol3.44}
implies that $\widetilde{M}$ contains an incompressible surface of
genus 2 or 3.  Since $\Hg(\widetilde{M})$ bounds above the dimension
of its $\mathbb{Z}_2$-homology, we have $\Hg(\widetilde{M}) \geq 8$.
Theorem \ref{closedvol6.89} now implies that $\widetilde{M}$ has
volume greater than $6.89$; hence that $M$ has volume greater than
$3.445$.  \end{proof}


\bibliographystyle{plain}
\bibliography{volumes}

\begin{thebibliography}{10}

\bibitem{ACS}
Ian Agol, Marc Culler, and Peter Shalen.
\newblock Singular surfaces, mod 2 homology, and hyperbolic volume, {I}.
\newblock Preprint, arXiv:math/0506396v4, February 2008.

\bibitem{ASTD}
Ian Agol, Peter~A. Storm, and William~P. Thurston.
\newblock Lower bounds on volumes of hyperbolic {H}aken 3-manifolds.
\newblock {\em J. Amer. Math. Soc.}, 20(4):1053--1077 (electronic), 2007.
\newblock With an appendix by Nathan Dunfield.

\bibitem{Bor}
K.~B{\"o}r{\"o}czky.
\newblock Packing of spheres in spaces of constant curvature.
\newblock {\em Acta Math. Acad. Sci. Hungar.}, 32(3-4):243--261, 1978.

\bibitem{CDS}
Marc Culler, Jason DeBlois, and Peter Shalen.
\newblock Volume, homology, and surfaces.
\newblock In preparation.

\bibitem{CS_onecusp}
Marc Culler and Peter Shalen.
\newblock Volume and homology of one-cusped hyperbolic 3-manifolds.
\newblock Preprint. arXiv:0707.4300v2, August 2007.

\bibitem{CS_vol3.44}
Marc Culler and Peter Shalen.
\newblock 4-free groups and hyperbolic geometry.
\newblock Preprint. arXiv:0806.1188v1, June 2008.

\bibitem{CS_vol}
Marc Culler and Peter Shalen.
\newblock Singular surfaces, mod 2 homology, and hyperbolic volume, {II}.
\newblock Preprint. arXiv:math/0701666v5, May 2008.

\bibitem{epstein}
D.~B.~A. Epstein.
\newblock Curves on {$2$}-manifolds and isotopies.
\newblock {\em Acta Math.}, 115:83--107, 1966.

\bibitem{lilfhs}
Michael Freedman, Joel Hass, and Peter Scott.
\newblock Closed geodesics on surfaces.
\newblock {\em Bull. London Math. Soc.}, 14(5):385--391, 1982.

\bibitem{FMP}
Roberto Frigerio, Bruno Martelli, and Carlo Petronio.
\newblock Small hyperbolic 3-manifolds with geodesic boundary.
\newblock {\em Experiment. Math.}, 13(2):171--184, 2004.

\bibitem{GMM}
D.~Gabai, R.~Meyerhoff, and P.~Milley.
\newblock Mom technology and volumes of hyperbolic 3-manifolds.
\newblock Preprint. arXiv:math/0606072, July 2006.

\bibitem{JaS}
William~H. Jaco and Peter~B. Shalen.
\newblock Seifert fibered spaces in {$3$}-manifolds.
\newblock {\em Mem. Amer. Math. Soc.}, 21(220):viii+192, 1979.

\bibitem{Jo}
Klaus Johannson.
\newblock {\em Homotopy equivalences of {$3$}-manifolds with boundaries},
  volume 761 of {\em Lecture Notes in Mathematics}.
\newblock Springer, Berlin, 1979.

\bibitem{Ko}
Sadayoshi Kojima.
\newblock Polyhedral decomposition of hyperbolic {$3$}-manifolds with totally
  geodesic boundary.
\newblock In {\em Aspects of low-dimensional manifolds}, volume~20 of {\em Adv.
  Stud. Pure Math.}, pages 93--112. Kinokuniya, Tokyo, 1992.

\bibitem{KM}
Sadayoshi Kojima and Yosuke Miyamoto.
\newblock The smallest hyperbolic {$3$}-manifolds with totally geodesic
  boundary.
\newblock {\em J. Differential Geom.}, 34(1):175--192, 1991.

\bibitem{Miy}
Yosuke Miyamoto.
\newblock Volumes of hyperbolic manifolds with geodesic boundary.
\newblock {\em Topology}, 33(4):613--629, 1994.

\bibitem{Ratcliffe}
John~G. Ratcliffe.
\newblock {\em Foundations of hyperbolic manifolds}, volume 149 of {\em
  Graduate Texts in Mathematics}.
\newblock Springer-Verlag, New York, 1994.

\bibitem{Schul1}
Jennifer Schultens.
\newblock The classification of {H}eegaard splittings for (compact orientable
  surface){$\,\times\, S\sp 1$}.
\newblock {\em Proc. London Math. Soc. (3)}, 67(2):425--448, 1993.

\bibitem{Waldhausen}
Friedhelm Waldhausen.
\newblock On irreducible {$3$}-manifolds which are sufficiently large.
\newblock {\em Ann. of Math. (2)}, 87:56--88, 1968.

\end{thebibliography}

\end{document}